\pgfplotsset{compat=1.18}
\newtheorem{theorem}{Theorem}[section]
\newtheorem{corollary}[theorem]{Corollary}%
\newtheorem{lemma}[theorem]{Lemma}%
\crefname{lemma}{Lemma}{Lemma}
\newtheorem{prob}{Problem}%
\crefname{prob}{Problem}{Problem}
\newtheorem{assumption}{Assumption}
\newtheorem{remark}{Remark}[section]%
\newcommand{\ud}{\,\mathrm{d}}
\newcommand{\bx}{\mathbf{x}}
\newcommand{\bn}{\mathbf{n}}
\newcommand{\bomega}{\bm{\omega}}
\newcommand{\veps}{\varepsilon}
\newcommand*{\tran}{\mathsf{T}}
\DeclareMathOperator{\diag}{diag}
\begin{document}
	
\title[Uniform convergence of $P_N$ DG schemes]{Numerical analysis of a spherical harmonic discontinuous Galerkin method for scaled radiative transfer equations with isotropic scattering}	

\author{Qiwei Sheng}
\address{Department of Mathematics, California State University, Bakersfield, CA 93311}
\email{qsheng@csub.edu}

\author{Cory Hauck}
\address{Computer Science and Mathematics Division, Oak Ridge National Laboratory, Oak Ridge, TN 37831}
\email{hauckc@ornl.gov}

\author{Yulong Xing}
\address{Department of Mathematics, The Ohio State University, Columbus, OH 43210}
\email{xing.205@osu.edu}

\thanks{
	This work of the second author is supported by the DOE Office of Advanced Scientific Computing Research and by the National Science Foundation under Grant No. 1217170. ORNL is operated by UT-Battelle, LLC., for the U.S. Department of Energy under Contract DE-AC05-00OR22725. The United States Government retains and the publisher, by accepting the article for publication, acknowledges that the United States Government retains a non-exclusive, paid-up, irrevocable, world-wide license to publish or reproduce the published form of this manuscript, or allow others to do so, for the United States Government purposes. The Department of Energy will provide public access to these results of federally sponsored research in accordance with the DOE Public Access Plan (http://energy.gov/downloads/doe-public-access-plan).
	The work of the third author is partially supported by the NSF grant DMS-1753581. }

\begin{abstract}
In highly diffusion regimes when the mean free path $\varepsilon$ tends to zero, the radiative transfer equation has an asymptotic behavior which is governed by a diffusion equation and the corresponding boundary condition. Generally, a numerical scheme for solving this problem has the truncation error containing an $\varepsilon^{-1}$ contribution, that leads to a nonuniform convergence for small $\varepsilon$. Such phenomenons require high resolutions of discretizations, which degrades the performance of the numerical scheme in the diffusion limit. In this paper, we first provide a--priori estimates for the scaled spherical harmonic ($P_N$) radiative transfer equation. Then we present an error analysis for the spherical harmonic discontinuous Galerkin (DG) method of the scaled radiative transfer equation showing that, under some mild assumptions, its solutions converge uniformly in $\varepsilon$ to the solution of the scaled radiative transfer equation. We further present an optimal convergence result for the DG method with the upwind flux on Cartesian grids. Error estimates of $\left(1+\mathcal{O}(\varepsilon)\right)h^{k+1}$ (where $h$ is the maximum element length) are obtained when tensor product polynomials of degree at most $k$ are used. 
\end{abstract}


\subjclass[2010]{65N12, 65N30, 35B40, 35B45, 35L40}

\maketitle

\section{Introduction}
The radiative transfer equation (RTE) is a type of kinetic equation that models the scattering and absorption of radiation propagating through a material medium. In this paper, we consider the scaled, steady-state, linear RTE with isotropic scattering and periodic boundary conditions for the angular flux $u$ that depends on $\bx \in X\subset\mathbb{R}^d$ and $\bomega \in \mathbb{S}$, the unit sphere in $\mathbb{R}^d$.  For simplicity we assume $X=(0,1)^d$, and for physical problems, $d=3$.  However, if $u$ possesses special symmetries, reduced equations in one or two-dimensional spatial domains can be derived \cite{LM1984}. 

We denote by $\partial X$ the boundary of $X$ and by $\mathbf{k}(\bx)$ the unit outward normal to $X$ at $\bx \in  \partial X$.  We then set $\Gamma^-=\{(\bx,\bomega)\in \partial X \times \mathbb{S} \colon \bomega \cdot \mathbf{k}(\bx) < 0\}$.  With this notation, the RTE with periodic boundary conditions takes the form
\begin{subequations}\label{eq:rte_scale_all}
	\begin{alignat}{2}
		\bomega\cdot\nabla u(\bx,\bomega)+\frac{\sigma_{\mathrm{t}}}{\varepsilon}u(\bx,\bomega)
    &=\left(\frac{\sigma_{\mathrm{t}}}{\varepsilon}-\varepsilon\sigma_{\mathrm{a}}\right)\overline{u}(\bx)+\varepsilon f, &&\quad (\bx,\bomega) \in X \times \mathbb{S},\label{eq:rte_scale}\\
		u(\bx,\bomega)&=u(\bx - \mathbf{k}(\bx),\bomega),&& \quad (\bx,\bomega) \in \Gamma^-,
    \label{eq:rte_scale_pbc}
	\end{alignat}
\end{subequations}
where the scalar flux $\overline{u}=\frac{1}{\int_{\mathbb{S}}\ud\bomega}\int_\mathbb{S} u \ud{\bomega}$ is the average of $u$ over $\mathbb{S}$, 
the functions $\sigma_{\mathrm{a}} = \sigma_{\mathrm{a}}(\bx)$ and $\sigma_{\mathrm{t}}=\sigma_{\mathrm{t}}(\bx)$ are (known) absorption and total cross sections, respectively, and $f=f(\bx,\bomega)$ is a (known) source. The parameter $\varepsilon>0$ is a scaling parameter that measures the relative strength of scattering.   We assume throughout the paper that $0 < \veps < 1$.  Although $u$ depends on $\varepsilon$, we suppress this dependence in the notation for ease of presentation. 

We assume that  
\begin{subequations}\label{as:rte_scale}
	\begin{align}	&\sigma_{\mathrm{t}},\sigma_{\mathrm{a}}\in L^\infty(X),\quad  \sigma_{\mathrm{t}}(\bx)-\varepsilon^2\sigma_{\mathrm{a}}(\bx) > 0  ~\text{a.e.}~ \mathbf{x} \in X,\label{as:rte_scale_1}\\
	&\sigma_{\mathrm{t}}>\sigma_{\mathrm{a}} \ge \sigma_{\mathrm{a}}^{\mathrm{min}} \text{ in } X \text{   for some constant } \sigma_{\mathrm{a}}^{\mathrm{min}} >0,\label{as:rte_scale_2}\\
		&f(\bx,\bm{\omega})\in L^2(X\times\mathbb{S}).\label{as:rte_scale_3}
	\end{align}
\end{subequations}
The quantity $\sigma_{\mathrm{s},\varepsilon} = \sigma_{\mathrm{t}}(\bx)-\varepsilon^2\sigma_{\mathrm{a}}(\bx)$ is a  (non-dimensional) scattering cross-section. While the condition \eqref{as:rte_scale_2} is not strictly necessary, it is frequently used in a priori estimates. 
Under the assumptions in \eqref{as:rte_scale}, the system \eqref{eq:rte_scale_all} has a unique solution \cite{agoshkov1998boundary} in 
{$\mathring{H}^1_2(X\times\mathbb{S})$}---the subspace of  $H^1_2(X\times\mathbb{S}):=\{v \in L^2(X\times \mathbb{S})\colon {\bomega}\cdot \nabla v \in L^2(X\times \mathbb{S})\}$ containing all functions that are periodic in each spatial component $x_i$, $i \in \{1,\cdots,d\}$, with norm
\begin{equation*}
    \|v\|_{\mathring{H}^1_2(X\times\mathbb{S})}=\|v\|_{H^1_2(X\times\mathbb{S})}=(\|v\|^2_{L^2(X\times\mathbb{S})}+\|\bm{\omega}\cdot \nabla v\|^2_{L^2(X\times\mathbb{S})})^{1/2}.
\end{equation*}

If $\varepsilon \ll 1$ and $\sigma_{\mathrm{t}} / \varepsilon \gg 1$ uniformly across the spatial domain $X$, then solution $u$ of the RTE can be approximated accurately by the solution of a diffusion equation that is much cheaper to compute \cite{habetler1975uniform,LarsenKeller,bensoussan1979boundary}. However, for true multiscale applications, the size of $\sigma_{\mathrm{t}}$ varies significantly as a function of $\bx$, so that both diffusive and non-diffusive regions may coexist. In such situations, it is necessary to utilize kinetic models like the RTE for accurate simulations.  Unfortunately, traditional numerical approaches for solving the RTE may be highly inefficient efficient in diffusive regions. In particular, local truncation errors may scale like $h^p / \varepsilon$, where $h>0$ is the spatial mesh size and $p>0$ is an integer corresponding to the order of the method \cite{jin2010asymptotic,lowrie2002methods}. In such cases, accuracy may degrade dramatically as the parameter $\veps \to 0$. For this reason, asymptotic preserving (AP) schemes \cite{Jin1999, jin2010asymptotic} have been developed.  Such schemes transition to consistent and stable numerical schemes for the macroscopic model as $\varepsilon \to 0$. 

Numerical discretization of the RTE is often carried out separately for the angular and spatial variables.  In this paper, we consider angular discretizations using spherical harmonics expansions. The spherical harmonics ($P_N$) method \cite{CZ1967,davison1957neutron} is one of the most widely-employed angular discretization methods for the RTE.  It is also used to discretize kinetic semiconductor equations \cite{rupp2016review}, which share many features with the RTE.
The $P_N$ method is a class of spectral methods that approximates the solution to the RTE by a linear combination of spherical harmonics basis functions in angle.  In the steady-state case, the coefficients of this expansion are functions of space that satisfy a symmetrizable, linear hyperbolic system of equations.  Due to the properties of the spherical harmonics, the matrices in this system are very sparse and thereby cheap to assemble and apply. 

The $P_N$ method enjoys several beneficial properties.  Among them: (i) it preserves the rotational symmetry of the transport equation \cite{garrett2016eigenstructure}; (ii) for steady-state problems, it is equivalent to the diffusion limit when $N=1$; and (iii) for sufficiently smooth solutions, the convergence of the method is spectral \cite{FHK2016}. 
Moreover, under certain conditions, the $P_N$ solution converges to the solution of the RTE at a rate $\mathcal{O}(\varepsilon^N)$ as $\varepsilon \to 0$, and its angular moments (which correspond to physically meaningful quantities) may converge even faster \cite{CH2019}. 

While the current paper focuses on periodic conditions, inflow boundary conditions that prescribe known data on $\Gamma^-$ are more physically relevant.
The treatment of such boundary conditions is possible with direct approaches \cite{bunger2020stable} and variational frameworks \cite{ES2012,MRS1999}.  For vacuum (zero inflow) conditions, an extended computational domain can be employed to maintain sparsity of the $P_N$ system \cite{SW2021, egger2019perfectly} or to handle complicated geometries \cite{powell2019pseudospectral} in a periodic setting.

Spatial discretizations strategies for the $P_N$ equations include finite differences \cite{seibold2014starmap}, least squares \cite{brown2003moment,varin2005spherical} and mixed finite elements \cite{egger2019perfectly}, discontinuous Galerkin methods \cite{SW2021}, and psuedo-spectral methods \cite{powell2019pseudospectral}.  Methods for capturing the diffusion limit for the time-dependent $P_N$ equations \cite{hauck2010methods} have been explored in both finite-volume \cite{hauck2009temporal} and discontinuous Galerkin (DG) \cite{mcclarren2008effects,mcclarren2008semi} contexts; and a rigorous analysis of the steady-state problem, based on a least-squares finite element formulation, can be found in \cite{manteuffel1999boundary}.

In the current paper, we focus on DG discretizations of the $P_N$ equations ($P_N$-DG). DG methods were first introduced in \cite{reed1973triangular} to simulate transport equations like \eqref{eq:rte_scale_all}, and a rigorous analysis was carried out in \cite{LR1974} for a simplified equation without scattering. 
A complete space-angle convergence analysis of discrete ordinate DG methods was conducted in \cite{HHE2010} for problems with scattering, and analogous results for the spherical harmonic DG method were recently proved in \cite{SW2021}. While the analysis studied in \cite{HHE2010,SW2021} is valid for any fixed $\veps = \mathcal{O}(1)$, it does not address the behavior of the DG method when $\veps \to 0$.
In the context of discrete ordinates angular discretization, it is known that DG methods can capture the diffusion limit whenever the underlying approximation space supports globally continuous linear polynomials \cite{LM1989,Adams2001,GK2010}.  This requirement translates to local $\mathbb{P}_1$ and $\mathbb{Q}_1$ approximations for triangular and rectangular elements, respectively. In \cite{jin2010asymptotic}, a general framework was provided to construct uniform error estimates from conventional error bounds and asymptotic analysis, although these estimates are typically not sharp.  Sharper estimates for discrete-ordinate DG discretizations that are uniform in $\varepsilon$ were recently established in \cite{SH2021}.

In this paper, we investigate the convergence of $P_N$-DG methods for the scaled RTE with isotropic scattering and periodic boundary conditions.  The analysis builds upon ideas from \cite{SW2021} for establishing angular error estimates and from \cite{SH2021} for establishing spatial error estimates. The main contribution is to derive spectral estimates for the $P_N$ discretization that are uniform in $\veps$ as well as $\mathcal{O}(h^k)$ error estimates for the DG scheme that are uniform in $\veps$ whenever the DG space contains $\mathbb{Q}_1$ for Cartesian cells and $\mathbb{P}_1$ for triangles. Under this assumption on $k$, the DG estimate improves upon the general strategy in \cite{jin2010asymptotic} (which leads to an error bound of $\mathcal{O}(h^{k/2 + 1/4})$) and it implies that the $P_N$-DG scheme is necessarily accurate in the diffusion limit.  Another contribution of this paper is that, to the best of the authors' knowledge, we prove for the first time a uniform optimal error estimate $\mathcal{O}(h^{k+1})$ for the spatial discretization on Cartesian meshes with tensor product elements.   

The analysis presented here does have some limitations.  For example, the uniform error estimates may come at a price: the error obtained in \cite{SW2021} is $\mathcal{O}(h^{k+1/2})$ for a fixed $\veps$, while here we can obtain a uniform $\mathcal{O}(h^{k})$ bound across all  $\veps \in (0,1)$. In addition,  the assumptions in \eqref{as:rte_scale} imply that voids (i.e. $\sigma_{\rm{t}}=0$) are not allowed and, moreover, that $\sigma_{\rm{t}} / \veps \gg 1$ whenever $\veps \ll 1$.  Hence as $\veps \to 0$, the problem in \eqref{eq:rte_scale_all} becomes uniformly diffusive.  In other words, regions of very thick and thin materials do not coexist at the same time, as is often the case in realistic problems.  Therefore the analysis here should be considered as a first step in analyzing more realistic problems.

The rest of this article is organized as follows. In \Cref{sec:settings}, the scaled RTE with isotropic scattering and its spherical harmonic discretization are introduced. We also present the notations to be used in the remainder of this paper, and the a priori estimates regarding the solutions of the spherical harmonic equation. In the last part of this section, we define the DG scheme for the spherical harmonic equation and prove its stability and well-posedness. In \Cref{sec:err_ana}, uniform convergence and error estimates of the spherical harmonic DG method for the scaled RTE are established.  In \Cref{sec:err_ana_optimal}, we prove an optimal error estimate for the upwind DG scheme in one-dimensional slab geometries and on multidimensional Cartesian mesh with tensor product elements. Conclusions are given in \Cref{sec:conclusion}.

\section{Preliminaries and problem setting}\label{sec:settings}
We start with presenting some notations. Throughout the paper, the symbol $a\lesssim b$ abbreviates $a\le Cb$ for any two real quantities $a$ and $b$, with $C>0$ being a nonessential constant independent of the finite element mesh size, which may take different values at different appearances. 
The conventional notation $H^r(D)$ was adopted to indicate Sobolev spaces on (possibly lower-dimensional) subdomain $D\subset X$ with the norm $\|\cdot\|_{r,D}$. Clearly, we have $H^0(D)=L^2(D)$ whose norm is denoted by $\|\cdot\|_D$. 

\subsection{Spherical harmonic method for angular discretization}\label{sec:SH_PN}
In this section, the spherical harmonic functions and the $P_N$ method will be briefly reviewed, we refer the interested readers to, e.g., \cite{AH2012} and \cite{Claus1966}. The angle bracket is introduced as a short-hand notation for the angular integration over $\mathbb{S}$: $\langle \cdot\rangle = \int_\mathbb{S} (\cdot) \ud \bm{\omega}$.

For any $\bomega\in \mathbb{S}$, let $\bomega=\begin{bmatrix} \omega_1 & \omega_2 &\omega_3 \end{bmatrix}^{\tran} = \begin{bmatrix} \sqrt{1-\mu^2}\cos(\varphi) & \sqrt{1-\mu^2}\sin(\varphi) & \mu \end{bmatrix}^{\tran}$, where $\mu:=\omega_3\in [-1,1]$ and $\varphi\in [0,2\pi)$ is the angle between the $x_1$-axis and the projection of $\bomega$ onto the $x_1$-$x_2$ plane, respectively. 
Given integers $\ell\ge 0$ and $\kappa\in [-\ell,\ell]$, the normalized, real-valued spherical harmonic of degree $\ell$ and order $\kappa$ is expressed in terms of $\mu$ and $\varphi$ as $m^\kappa_\ell(\bomega)=\alpha_\ell^\kappa P^\kappa_\ell(\mu)\, T^\kappa(\varphi)$, where $P^\kappa_\ell$ is an associated Legendre function, $T^\kappa$ is a sinusoidal function \cite{FHK2016}, and $\alpha_\ell^\kappa = \sqrt{\frac{(2\ell+1)}{4 \pi} \frac{(\ell - |\kappa|)!}{(\ell+|\kappa|)!}}$ is a constant.

We collect the $n_\ell := 2\ell+1$ real-valued normalized harmonics of degree $\ell$ together into a vector-valued function $\bm{m}_\ell=\begin{bmatrix} m^{-\ell}_\ell & m^{-\ell+1}_\ell & \cdots & m^0_\ell & \cdots & m^{\ell-1}_\ell & m^\ell_\ell \end{bmatrix}^{\tran}$ and for any given $N$, we set $\bm{m} = \begin{bmatrix}\bm{m}_0^{\tran} & \bm{m}_1^{\tran} & \cdots & \bm{m}_N^{\tran}\end{bmatrix}^{\tran}$. Note that the normality of $\bm{m}$ is not necessary but adopting it would bring great convenience in the later deductions. The vector $\bm{m}$ has $L := \sum^N_{\ell=0} n_\ell= (N+1)^2$ components which form an orthogonal basis for the space
$\mathbb{P}_N=\left\{\sum^N_{\ell=0} \sum^\ell_{\kappa=-\ell} c^\kappa_\ell m^\kappa_\ell : c^\kappa_\ell\in \mathbb{R} \text{ or } L^2(X), \text{ and } 0\le\ell\le N, |\kappa|\le\ell\right\}$.
Furthermore, the spherical harmonics fulfill a recursion relation of the form \cite{AH2012}
\begin{equation}\label{eq:SH_rec}
\omega_i \bm{m}_\ell = \bm{A}^{(i)}_{\ell,\ell+1}\bm{m}_{\ell+1} + \bm{A}^{(i)}_{\ell,\ell-1} \bm{m}_{\ell-1},
\end{equation}
where $\bm{A}^{(i)}_{\ell,\ell'} = \langle\omega_i\bm{m}_\ell\bm{m}^{\tran}_{\ell'}\rangle$ and $\big(\bm{A}^{(i)}_{\ell,\ell'}\big)^{\tran}=\bm{A}^{(i)}_{\ell',\ell}$.

The $P_N$ equations for the RTE \eqref{eq:rte_scale_all} can be derived by firstly approximating $u(\bx,\bomega)$ with a function $u_{P_N} \in \mathbb{P}_N$ in the form of
\begin{equation}\label{eq:u_appx}
u_{P_N}(\bx,\bomega) := \bm{m}^{\tran}(\bomega)\bm{u}(\bx),
\end{equation}
such that, for any $v\in \mathbb{P}_N$,
\begin{equation}\label{eq:rte_scale_pre_Pn}
\left\langle v\left(\bomega\cdot\nabla u_{P_N}(\bx,\bomega)+\frac{\sigma_{\mathrm{t}}(\bx)}{\varepsilon} u_{P_N}(\bx,\bomega) -\left(\frac{\sigma_{\mathrm{t}}(\bx)}{\varepsilon}-\varepsilon\sigma_{\mathrm{a}}(\bx)\right)\bar{u}_{P_N}\right) \right\rangle = \varepsilon\langle v f \rangle. 
\end{equation} 
We denote 
$u_\ell^k := \int_{\mathbb{S}}m_\ell^k u_{P_N} \ud\bomega,  \quad \ell=0,\cdots,N, \; |k|\le\ell$, and $\bm{u}_\ell = \begin{bmatrix}u_\ell^{-\ell}&\cdots&u_\ell^\ell\end{bmatrix}^{\tran}$. Then $\bm{u}= \begin{bmatrix}\bm{u}_0^{\tran} & \bm{u}_1^{\tran} & \cdots & \bm{u}_N^{\tran} \end{bmatrix}^{\tran}$. We also use a single index to denote the components in $\bm{m}$ and $\bm{u}$, e.g., $\bm{u}=\begin{bmatrix}u_1 & u_2 & \cdots & u_L\end{bmatrix}^{\tran}$. The function $u_{P_N}$ can be expressed as
\begin{equation*}
u_{P_N} = \bm{u}^{\tran}\bm{m} = \sum_{\ell=0}^N\bm{u}^{\tran}_\ell\bm{m}_\ell = \sum_{\ell=0}^N\sum_{|k|\leq \ell} {u}^k_\ell{m}_\ell^k=\sum_{i=1}^{L}m_iu_i.
\end{equation*}
Setting $v=\bm{m}$ in \eqref{eq:rte_scale_pre_Pn} and using \eqref{eq:u_appx} and the fact that $\langle \bm{m}\bm{m}^{\tran}\rangle=\bm{I}$, we can reformulate \eqref{eq:rte_scale_pre_Pn} into a system of hyperbolic equations ($P_N$ equations): 
\begin{subequations}\label{eq:rte_scale_Pn_all}
	\begin{align}
		\bm{A}\cdot \nabla \bm{u}(\bx) + \varepsilon\sigma_{\mathrm{a}}\bm{u}(\bx) + \left(\frac{\sigma_{\mathrm{t}}}{\varepsilon}-\varepsilon\sigma_{\mathrm{a}}\right) \bm{R}\bm{u}(\bx) &= \varepsilon\bm{f}(\bx) \qquad \text{ in } X, \label{eq:rte_scale_Pn}\\
		\bm{u}(\bx)&=\bm{u}(\bx+\mathbf{k}) \quad \text{ on } \partial X, \label{eq:rte_scale_Pn_bc}
	\end{align}
\end{subequations}
for arbitrary $\bx$ and certain $\mathbf{k}$ such that if $\bx\in \partial X$, then $\bx+\mathbf{k}\in \partial X$. 
Here, $\bm{u}(\bx)\in \mathbb{R}^L$,
$\bm{f}=\langle\bm{m}f\rangle$, $\bm{R}=\bm{I}-\diag(1,0,\cdots,0)=\diag(0,1,\cdots,1)$  is diagonal and positive semi-definite,
and the dot product between $\bm{A}:=\begin{bmatrix} \left(\bm{A}^{(1)}\right)^\tran & \left(\bm{A}^{(2)}\right)^\tran & \left(\bm{A}^{(3)}\right)^\tran \end{bmatrix}^\tran$ and the gradient is understood as
$\bm{A}\cdot \nabla := \sum^3_{i=1}\bm{A}^{(i)}\partial_i$ with $\bm{A}^{(i)}=\int_{\mathbb{S}}\omega_i\bm{m}\bm{m}^{\tran}\ud\bomega$,  $i=1,2,3$. Note that $\bm{A}^{(i)},i=1,2,3$ is symmetric and sparse. Indeed, the recursion relation \eqref{eq:SH_rec} and orthogonality conditions for the associated Legendre functions \cite{AH2012, Claus1966} imply that $\bm{A}^{(i)}_{\ell,\ell'}$ is nonzero only if $\ell' = \ell \pm 1$. Therefore
\begin{equation}\label{eq:A_struc}
\bm{A}^{(i)} = 
\begin{bmatrix}
0 				& \bm{A}^{(i)}_{0,1} 	& 0             	& 0 				& \dots					& 0  \\
\bm{A}^{(i)}_{1,0} & 0 			  	& \bm{A}^{(i)}_{1,2}	& 0 				& \dots 			& 0\\
0				& \bm{A}^{(i)}_{2,1}  	& 0 			   	& \bm{A}^{(i)}_{2,3}   & \dots				& 0\\
0				& 0					& \ddots			& \ddots			& \ddots			& 0\\
\vdots			& \vdots			& \ddots 			&\bm{A}^{(i)}_{N-1,N-2}& 0					&\bm{A}^{(i)}_{N-1,N} \\
0				& 0					& \dots				&0					&\bm{A}^{(i)}_{N,N-1}& 0 \\
\end{bmatrix}.
\end{equation}
Moreover, since $\bm{A}^{(i)}$, $i=1,2,3$ are symmetric, they can be diagonalized as:
\begin{equation}\label{eq:A_sym}
\bm{A}^{(i)}=\mathcal{Q}_i\varLambda^{(i)} \mathcal{Q}_{i}^{\tran},
\end{equation}
where $\mathcal{Q}_i$ is a real orthogonal matrix and $\varLambda^{(i)}=\diag(\lambda^{(i)}_1,\lambda^{(i)}_2,\cdots,\lambda^{(i)}_L)$ is real.
Let $|\varLambda^{(i)}|:=\diag(|\lambda^{(i)}_1|,|\lambda^{(i)}_2|,\cdots,|\lambda^{(i)}_L|)$, then we define $|\bm{A}^{(i)}|=\mathcal{Q}_i |\varLambda^{(i)}| \mathcal{Q}_i^{\tran}$, which will be used for the definition of numerical flux later. 

Let $D\subseteq X$ be a (possibly lower-dimensional) subdomain of $X$. Given $\bm{u}, \bm{v}\in \left[L^2(D)\right]^L$,  define the inner product
\begin{equation}\label{eq:vector_inner}
(\bm{u},\bm{v})_D=\int_D \bm{u}^{\tran} \bm{v}\ud\bx = \sum_{\ell=0}^N\sum_{|k|\leq \ell} \int_D u^k_\ell\,v^k_\ell \ud \bx,
\end{equation}
as well as the norms 
\begin{equation*}\label{eq:norm_def}
\|\bm{u}\|_{r,D}=\left(\sum_{\ell=0}^N\sum_{|k|\leq \ell}\|u^k_\ell\|_{r,D}^2\right)^{1/2} \quad\text{ for } \bm{u}\in \left[H^r(D)\right]^L.
\end{equation*}
When $r=0$ and $D=X$, we omit the subscripts $0$ and $X$, i.e., $(\bm{u},\bm{v})=(\bm{u},\bm{v})_X$ and $\|\bm{u}\|:=\|\bm{u}\|_{0,X}$. Since $\langle \bm{m}\bm{m}^{\tran}\rangle=\bm{I}$, it follows that
\begin{equation*}
\|u_{P_N}\|_{L^2(X\times\mathbb{S})} =\left(\int_X\langle (\bm{m}^{\tran}\bm{u})^2\rangle \ud \bx\right)^{1/2}=\left(\int_X \bm{u}^{\tran}\langle\bm{m}\bm{m}^{\tran}\rangle\bm{u} \ud \bx\right)^{1/2}=\|\bm{u}\|.
\end{equation*}
Define the space
$\bm{V}:=\left\{\bm{u}\in [L^2(X)]^{L}: \bm{A}\cdot\nabla\bm{u}\in [L^2(X)]^{L}\right\}$
with the associated norm 
$\|\bm{u}\|_{\bm{V}} := \|\bm{u}\| + \|\bm{A}\cdot\nabla\bm{u}\|$,
and the space $\bm{W}:=\left\{\bm{u}\in\bm{V}:\bm{u}\right.$ is $1$-periodic in each spatial argument $x_i$, $\left.i=1,2,3\right\}$. We have the following result \cite[Theorem 6]{SW2021}.
\begin{theorem}\label{th:Pn_wellposed}
	Assume that \eqref{as:rte_scale} holds. Then, for any fixed $\varepsilon>0$, the system of $P_N$ equations \eqref{eq:rte_scale_Pn_all} has a unique solution $\bm{u}\in\bm{W}$.
\end{theorem}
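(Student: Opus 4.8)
The plan is to recognize \eqref{eq:rte_scale_Pn_all} as a constant-coefficient symmetric (Friedrichs) system with a coercive zeroth-order term, and to establish bijectivity of the associated operator by a closed-range argument. First I would collect the zeroth-order terms into a single matrix $\bm{S} := \varepsilon\sigma_{\mathrm{a}}\bm{I} + (\sigma_{\mathrm{t}}/\varepsilon - \varepsilon\sigma_{\mathrm{a}})\bm{R}$. Since $\bm{R} = \diag(0,1,\dots,1)$, this matrix is diagonal, $\bm{S} = \diag(\varepsilon\sigma_{\mathrm{a}}, \sigma_{\mathrm{t}}/\varepsilon, \dots, \sigma_{\mathrm{t}}/\varepsilon)$, and under \eqref{as:rte_scale_1}--\eqref{as:rte_scale_2} it is bounded, symmetric, and uniformly positive definite: for the fixed $\varepsilon$ at hand, each diagonal entry is bounded below by $c_0 := \varepsilon\sigma_{\mathrm{a}}^{\mathrm{min}} > 0$, so $\bm{S} \succeq c_0 \bm{I}$. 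Writing $\mathcal{A}\bm{u} := \bm{A}\cdot\nabla\bm{u}$, the problem becomes $\mathcal{A}\bm{u} + \bm{S}\bm{u} = \varepsilon\bm{f}$ posed on the graph space $\bm{W}$.

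The key structural fact is that $\mathcal{A}$ is skew-symmetric on $\bm{W}$: because each $\bm{A}^{(i)}$ is symmetric and \emph{constant} in $\bx$, integration by parts gives $(\mathcal{A}\bm{u},\bm{v}) = -(\bm{u},\mathcal{A}\bm{v})$ for $\bm{u},\bm{v}\in\bm{W}$, the boundary contributions cancelling by the periodicity \eqref{eq:rte_scale_Pn_bc}. In particular $(\mathcal{A}\bm{u},\bm{u}) = 0$, so testing the equation against $\bm{u}$ and using the coercivity of $\bm{S}$ yields the a priori estimate $c_0\|\bm{u}\|^2 \le (\bm{S}\bm{u},\bm{u}) = \varepsilon(\bm{f},\bm{u}) \le \varepsilon\|\bm{f}\|\,\|\bm{u}\|$, hence $\|\bm{u}\| \le (\varepsilon/c_0)\|\bm{f}\|$. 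This bound gives uniqueness at once; moreover the equation forces $\|\mathcal{A}\bm{u}\| = \|\varepsilon\bm{f}-\bm{S}\bm{u}\| \lesssim \|\bm{f}\|$, so membership in $\bm{V}$ (and hence $\bm{W}$) is automatic once an $L^2$ solution is produced.

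For existence I would argue that the operator $T := \mathcal{A}+\bm{S}$ with domain $\bm{W}$ maps onto $[L^2(X)]^L$. The a priori estimate shows $T$ is bounded below, hence injective with closed range. Its formal adjoint is $T^{*} = -\mathcal{A}+\bm{S}$ (using the skew-symmetry of $\mathcal{A}$ and the symmetry of $\bm{S}$), which obeys the identical coercivity bound and is therefore also injective. An operator with closed range whose adjoint has trivial kernel is surjective, so $T$ is a bijection and \eqref{eq:rte_scale_Pn_all} has a unique solution $\bm{u}\in\bm{W}$. Equivalently, one may package these two coercivity estimates as the inf-sup and nondegeneracy hypotheses of the Banach--Ne\v{c}as--Babu\v{s}ka theorem.

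The main obstacle is the existence step rather than uniqueness: the first-order operator $\mathcal{A}$ supplies no coercivity in the graph norm $\|\cdot\|_{\bm{V}}$, so a direct Lax--Milgram argument on $\bm{V}$ is unavailable and one genuinely needs to exploit the skew-symmetry through the adjoint/closed-range mechanism. The delicate technical point underpinning this is the rigorous justification of the integration-by-parts identity and the identification of the adjoint's domain as exactly the periodic graph space $\bm{W}$; this rests on a density argument showing that smooth periodic vector fields are dense in $\bm{W}$ in the graph norm, after which the trace terms are seen to vanish by periodicity.
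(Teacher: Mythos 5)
The paper does not actually prove this theorem---it simply cites \cite[Theorem 6]{SW2021}---so your attempt should be judged as a self-contained alternative, and most of its skeleton is sound: collecting the zeroth-order terms into the positive-definite diagonal matrix $\bm{S}$ (the paper's $\bm{Q}$), the skew-symmetry of $\mathcal{A}=\bm{A}\cdot\nabla$ on periodic functions, the a priori bound $\|\bm{u}\|\le(\varepsilon/c_0)\|\bm{f}\|$, uniqueness, the observation that $\bm{u}\in\bm{V}$ is automatic from the equation, and the reduction of existence to ``$T$ has closed range and $\ker T^{*}=\{0\}$'' are all correct and are the standard Friedrichs-system route.

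The genuine gap is exactly at the point you flagged as delicate, and your proposed resolution of it is false: the domain of the Hilbert-space adjoint $T^{*}$ is \emph{not} the periodic graph space $\bm{W}$, and no density argument can make it so. The obstruction is that the matrices $\bm{A}^{(i)}$ are singular: by the block structure \eqref{eq:A_struc} they couple even-degree harmonics only to odd-degree ones, so $\mathrm{rank}\,\bm{A}^{(i)}\le 2\min(n_{\mathrm{even}},n_{\mathrm{odd}})<L$. Concretely, take $0\neq\bm{c}\in\ker\bm{A}^{(1)}$ and $\bm{v}(\bx)=x_1\bm{c}$. Then $\mathcal{A}\bm{v}=\bm{A}^{(1)}\bm{c}=\bm{0}$, so $\bm{v}\in\bm{V}$, and $\bm{v}$ is not periodic; yet for every $\bm{u}\in\bm{W}$, the integration by parts \eqref{eq:int_by_part} gives $(T\bm{u},\bm{v})=(\bm{u},\bm{S}\bm{v})$, because the boundary terms on the faces $x_1=0,1$ contain the factor $\bm{A}^{(1)}\bm{c}=\bm{0}$ and those on the remaining faces cancel by periodicity. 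Hence $\bm{v}\in D(T^{*})\setminus\bm{W}$. Consequently, proving that the formal adjoint $-\mathcal{A}+\bm{S}$ is injective \emph{on $\bm{W}$} does not prove $\ker T^{*}=\{0\}$, which is what the closed-range theorem requires: a kernel element could a priori live in $D(T^{*})\setminus\bm{W}$, and your argument as written never excludes this. The gap is fixable without changing the strategy: testing against $\bm{u}\in\bm{W}$ (whose face traces are dense) shows that any $\bm{v}\in D(T^{*})$ has its periodicity defect $\bm{v}-\bm{v}(\cdot+\mathbf{k})$ in $\ker(\bn\cdot\bm{A})$ on each pair of opposite faces; this weaker ``$\bm{A}$-periodicity'' still kills the boundary quadratic form, since on such a pair, writing $\bm{v}_{\pm}$ for the two traces and using symmetry of $\bm{A}^{(i)}$,
\begin{equation*}
\bm{v}_{-}^{\tran}\bm{A}^{(i)}\bm{v}_{-}-\bm{v}_{+}^{\tran}\bm{A}^{(i)}\bm{v}_{+}
=(\bm{v}_{-}-\bm{v}_{+})^{\tran}\bm{A}^{(i)}\bm{v}_{-}+\bm{v}_{+}^{\tran}\bm{A}^{(i)}(\bm{v}_{-}-\bm{v}_{+})=0,
\end{equation*}
so $(\mathcal{A}\bm{v},\bm{v})=0$ for every $\bm{v}\in D(T^{*})$; then $T^{*}\bm{v}=\bm{0}$ together with the coercivity of $\bm{S}$ forces $\bm{v}=\bm{0}$. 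With that replacement (triviality of $\ker T^{*}$ on its true domain, rather than the false identification $D(T^{*})=\bm{W}$), your existence argument closes.
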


The space $\bm{V}$ has a well-defined trace, and the following integration by parts result  \cite[Corollary B.57]{EG2004} holds: $\forall \bm{u},\bm{v}\in \bm{V}$,
\begin{equation}\label{eq:int_by_part}
	(\bm{A}\cdot\nabla\bm{u}, \bm{v})_{D} = (\bn\cdot\bm{A}\bm{u}, \bm{v})_{\partial D} - \sum_{i=1}^3(\bm{A}^{(i)}\bm{u},\partial_i \bm{v})_D,
\end{equation} 
for any Lipschitz domain $D\subseteq X$, where $\bn\cdot\bm{A} = \sum_{i=1}^3 n_i\bm{A}^{(i)}$. Letting $\bm{v}=\mathbf{e}_i$, $i=1,\cdots,L$ in \eqref{eq:int_by_part}, where $\mathbf{e}_i$ is the $i$th column of the identity matrix, yields the divergence formula:
\begin{equation*}\label{eq:div}
	\int_D \bm{A}\cdot\nabla\bm{u} \ud \bx = \int_{\partial D}\bn\cdot\bm{A}\bm{u}\ud \bx.
\end{equation*} 
The following lemma describes the continuity of the functions in $\bm{V}$. 
\begin{lemma}[{\cite[Lemma 2]{SW2021}}]\label{lem:cont}
	Assume $\bm{u}\in\bm{V}$. Then for any Lipschitz surface $E\subset X$, there holds 
	\begin{equation*}\label{eq:cont}
		\int_{E} \bn\cdot\bm{A}\big(\bm{u}|_{D_1}-\bm{u}|_{D_2}\big)\ud\bx=\mathbf{0},
	\end{equation*}
	where $D_1$ and $D_2$ are subdomains of a Lipschitz domain $D\subseteq X$ such that $D=D_1\cup D_2$ with $D_1\cap D_2=\emptyset$ and $\overline{D}_1\cap\overline{D}_2=E$, i.e.,  $E$ is a shared surface of $D_1$ and $D_2$. 
\end{lemma}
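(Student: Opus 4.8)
The plan is to read the statement as an assertion that the normal flux $\bn\cdot\bm{A}\bm{u}$ is continuous across the interface $E$, and to derive it from the divergence formula $\int_D \bm{A}\cdot\nabla\bm{u}\ud\bx = \int_{\partial D}\bn\cdot\bm{A}\bm{u}\ud\bx$ already established for any Lipschitz domain via \eqref{eq:int_by_part}. The essential mechanism is the additivity of the volume integral over the disjoint decomposition $D=D_1\cup D_2$, combined with the cancellation of the boundary contributions supported on the outer boundary $\partial D$.

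First I would observe that, since $\bm{A}\cdot\nabla\bm{u}\in[L^2(X)]^L$, its restrictions lie in $[L^2(D_j)]^L$, so $\bm{u}|_{D_j}$ belongs to the analogous graph space over $D_j$ and the normal trace $\bn\cdot\bm{A}\bm{u}|_{D_j}$ is well defined on $\partial D_j$ in the appropriate dual sense. Next I would apply the divergence formula on each of the three Lipschitz domains $D$, $D_1$, and $D_2$. Decomposing the boundaries as $\partial D_j=\Gamma_j\cup E$, where $\Gamma_j:=\partial D_j\cap\partial D$ and $\partial D=\Gamma_1\cup\Gamma_2$ up to sets of surface measure zero, and orienting $\bn$ on $E$ as the outward normal of $D_1$ so that the outward normal of $D_2$ on $E$ is $-\bn$, I would write out the three resulting identities explicitly.

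The key step is then to add the identities for $D_1$ and $D_2$ and subtract the one for $D$. On the left, additivity gives $\int_{D_1}+\int_{D_2}-\int_D=0$. On the right, the contributions over $\Gamma_1$ and $\Gamma_2$ arising from $D_1$ and $D_2$ reproduce exactly $\int_{\partial D}\bn\cdot\bm{A}\bm{u}\ud\bx$ — here one uses that the trace of $\bm{u}|_{D_j}$ on $\Gamma_j$ agrees with the trace of the global $\bm{u}$ — and therefore cancel against the $D$-identity. What survives is precisely $\int_E\bn\cdot\bm{A}\bm{u}|_{D_1}\ud\bx+\int_E(-\bn)\cdot\bm{A}\bm{u}|_{D_2}\ud\bx$, which equals $\int_E\bn\cdot\bm{A}(\bm{u}|_{D_1}-\bm{u}|_{D_2})\ud\bx$, and setting it to zero yields the claim.

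The main obstacle is not the algebra but the rigorous meaning of the boundary integrals: on a merely Lipschitz surface $E$ the traces of $\bm{u}$ from the two sides need not coincide pointwise — indeed, their difference is exactly what the lemma controls — and $\bn\cdot\bm{A}\bm{u}|_{D_j}$ lives in a negative-order trace space, so each boundary integral must be understood as a duality pairing. The delicate points are therefore (i) justifying that the global trace of $\bm{u}$ restricts consistently to the pieces $\Gamma_j$, and (ii) verifying that the two $E$-supported pairings can be combined as written; both follow from the trace theory for the graph space $\bm{V}$ underlying the integration-by-parts identity \eqref{eq:int_by_part}.
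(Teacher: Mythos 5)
Your argument is correct, and it is the standard proof of this fact: the paper itself does not prove \Cref{lem:cont} at all (it is quoted from \cite[Lemma 2]{SW2021}), so the only in-paper material is exactly what you used — the integration-by-parts identity \eqref{eq:int_by_part} and the divergence formula obtained from it by testing with the constant vectors $\mathbf{e}_i$. Applying that formula on $D$, $D_1$, $D_2$, using additivity of the volume integrals (valid since $\bm{A}\cdot\nabla\bm{u}\in[L^2(X)]^L$ restricts consistently to subdomains) and cancellation of the $\Gamma_j$ contributions, does yield $\int_E\bn\cdot\bm{A}\big(\bm{u}|_{D_1}-\bm{u}|_{D_2}\big)\ud\bx=\mathbf{0}$, which is precisely the claim. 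The one point you rightly flag — that for a general element of the graph space $\bm{V}$ the normal trace is only a functional of negative order, so splitting $\int_{\partial D_j}$ into $\int_{\Gamma_j}+\int_E$ and matching traces on $\Gamma_j$ requires the localization properties of the graph-space trace operator — is a genuine technicality, but the lemma's statement itself already presumes the trace on $E$ is integrable, so your proof operates at the same level of rigor as the paper and its cited source.
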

Since $E\subset X$ is arbitrary, by \Cref{lem:cont}, $\bn\cdot\bm{A}\big(\bm{u}|_{D_1}-\bm{u}|_{D_2}\big)=\mathbf{0}$ almost everywhere on any surface in $X$. 

\subsection{Variational Formulation}
Multiplying \eqref{eq:rte_scale_Pn_all} by an arbitrary function $\bm{v}\in \left[H^1_2(X)\right]^L$, integrating over $X$, using integration-by-parts, and employing the periodic boundary condition \eqref{eq:rte_scale_Pn_bc}, we get a variational formulation of \eqref{eq:rte_scale_Pn_all}:
\begin{equation}\label{eq:rte_var_form}
\mathfrak{a}(\bm{u},\bm{v})=\ell(\bm{v}),\quad \forall \bm{v}\in \left[H^1_2(X)\right]^L,
\end{equation}
where
\begin{equation*} 
\mathfrak{a}(\bm{u},\bm{v})= (\bm{A}\cdot \nabla\bm{u},\bm{v}) + (\bm{Q}\bm{u},\bm{v}) \;\text{ and }\;
\ell(\bm{v})= \varepsilon (\bm{f},\bm{v}). 
\end{equation*}
Here
\begin{equation}\label{eq:def_Q}
	\bm{Q} = \varepsilon\sigma_{\mathrm{a}}\bm{I} + \left(\frac{\sigma_{\mathrm{t}}}{\varepsilon}-\varepsilon\sigma_{\mathrm{a}}\right) \bm{R} = \begin{bmatrix}\varepsilon\sigma_{\mathrm{a}} &  \\  & \frac{\sigma_{\mathrm{t}}}{\varepsilon}\bm{I}_{L-1} \end{bmatrix}, 
\end{equation}
where $I$ is the identity matrix. 
We also define $\sqrt{\bm{Q}} = \diag(\sqrt{\varepsilon\sigma_{\mathrm{a}}}, \;  \sqrt{\frac{\sigma_{\mathrm{t}}}{\varepsilon}}\bm{I}_{L-1})$,  
and denote by 
\begin{equation}\label{eq:Q_norm}
	(\bm{u},\bm{v})_{\bm{Q}}=(\bm{Q}\bm{u},\bm{v}) \quad\text{ and } \quad \|\bm{u}\|_{\bm{Q}}=(\bm{u},\bm{u})_{\bm{Q}}^{1/2}=\|\sqrt{\bm{Q}}\bm{u}\|.
\end{equation}
By re-scaling $\bm{u}_0$ and $\bm{u}_\ell$, $\ell=1,\cdots, N$ with $\sqrt{\varepsilon}$ and $1/\sqrt{\varepsilon}$, respectively, we have the following lemma on the relation between estimates in terms of $\|\cdot\|$ and $\|\cdot\|_{\bm{Q}}$.
\begin{lemma}\label{lem:norms_rescale}
	For $\|\cdot\|_{\bm{Q}}$ defined in \eqref{eq:Q_norm} with $\bm{Q}$ defined in \eqref{eq:def_Q}, we have
	\begin{equation*}
		\|\bm{u}\|\lesssim \left(\sqrt{\varepsilon}+\frac{1}{\sqrt{\varepsilon}}\right)\|\bm{u}\|_{\bm{Q}}.
	\end{equation*}
\end{lemma}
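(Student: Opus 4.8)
The plan is to exploit the diagonal structure of $\bm{Q}$ recorded in \eqref{eq:def_Q}, which decouples the zeroth moment $\bm{u}_0$ from the higher moments $\bm{u}_1,\dots,\bm{u}_N$. Writing $\bm{u}$ componentwise and using $\|\bm{u}\|_{\bm{Q}}=\|\sqrt{\bm{Q}}\bm{u}\|$ together with the explicit form $\sqrt{\bm{Q}}=\diag(\sqrt{\varepsilon\sigma_{\mathrm{a}}},\sqrt{\sigma_{\mathrm{t}}/\varepsilon}\,\bm{I}_{L-1})$, I would first record the identity
\[
\|\bm{u}\|_{\bm{Q}}^2=\int_X \varepsilon\sigma_{\mathrm{a}}\,|\bm{u}_0|^2\ud\bx+\int_X \frac{\sigma_{\mathrm{t}}}{\varepsilon}\sum_{\ell=1}^N|\bm{u}_\ell|^2\ud\bx,
\]
and compare it with $\|\bm{u}\|^2=\int_X|\bm{u}_0|^2\ud\bx+\int_X\sum_{\ell=1}^N|\bm{u}_\ell|^2\ud\bx$. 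The task then reduces to inverting the pointwise weights $\varepsilon\sigma_{\mathrm{a}}$ and $\sigma_{\mathrm{t}}/\varepsilon$ with $\varepsilon$-explicit constants.

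The key step is to bound the two weights from below. Assumption \eqref{as:rte_scale_2} guarantees $\sigma_{\mathrm{a}}\ge\sigma_{\mathrm{a}}^{\mathrm{min}}>0$ and $\sigma_{\mathrm{t}}>\sigma_{\mathrm{a}}\ge\sigma_{\mathrm{a}}^{\mathrm{min}}>0$ a.e.\ in $X$, so that both $(\varepsilon\sigma_{\mathrm{a}})^{-1}\le(\varepsilon\sigma_{\mathrm{a}}^{\mathrm{min}})^{-1}$ and $\varepsilon/\sigma_{\mathrm{t}}\le\varepsilon/\sigma_{\mathrm{a}}^{\mathrm{min}}$ hold pointwise. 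Substituting these into the expression for $\|\bm{u}\|^2$ gives
\[
\|\bm{u}\|^2\le\frac{1}{\sigma_{\mathrm{a}}^{\mathrm{min}}}\left(\frac{1}{\varepsilon}\int_X\varepsilon\sigma_{\mathrm{a}}\,|\bm{u}_0|^2\ud\bx+\varepsilon\int_X\frac{\sigma_{\mathrm{t}}}{\varepsilon}\sum_{\ell=1}^N|\bm{u}_\ell|^2\ud\bx\right).
\]
Here the factor $\varepsilon^{-1}$ attached to the zeroth-moment term and the factor $\varepsilon$ attached to the higher moments are precisely the squares of the $1/\sqrt{\varepsilon}$ and $\sqrt{\varepsilon}$ rescalings of $\bm{u}_0$ and $\bm{u}_\ell$ anticipated in the text preceding the lemma; the rescaling is simply a bookkeeping device for this same computation.

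To finish, since both integrals are nonnegative I would bound the bracket by $\max\{\varepsilon^{-1},\varepsilon\}\,\|\bm{u}\|_{\bm{Q}}^2$ and then use $\max\{\varepsilon^{-1},\varepsilon\}\le\varepsilon+\varepsilon^{-1}=(\sqrt{\varepsilon}+1/\sqrt{\varepsilon})^2$ to obtain $\|\bm{u}\|^2\le(\sigma_{\mathrm{a}}^{\mathrm{min}})^{-1}(\sqrt{\varepsilon}+1/\sqrt{\varepsilon})^2\|\bm{u}\|_{\bm{Q}}^2$; taking square roots and absorbing the $\varepsilon$-independent constant $(\sigma_{\mathrm{a}}^{\mathrm{min}})^{-1/2}$ into $\lesssim$ yields the claim. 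The argument is elementary, so there is no genuine obstacle; the only point requiring care — and the single place the hypotheses actually enter — is securing a uniform positive lower bound on \emph{both} cross sections, so that the inversion constants depend on $\varepsilon$ only through the stated factors and not at all on $\bx$ or the mesh. Note in particular that the $L^\infty$ upper bounds in \eqref{as:rte_scale_1} play no role in this direction.
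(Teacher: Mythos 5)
Your proof is correct and is exactly the argument the paper intends: the paper states the lemma without a written proof, pointing only to the rescaling of $\bm{u}_0$ by $\sqrt{\varepsilon}$ and of the higher moments by $1/\sqrt{\varepsilon}$, which is precisely the weight-inversion computation you carry out using the lower bounds $\sigma_{\mathrm{a}}\ge\sigma_{\mathrm{a}}^{\mathrm{min}}$ and $\sigma_{\mathrm{t}}>\sigma_{\mathrm{a}}^{\mathrm{min}}$ from \eqref{as:rte_scale_2}. Your closing observations (that only the lower bounds on the cross sections are needed, and that the constant is independent of $\bx$ and the mesh) are accurate and consistent with the paper's use of the lemma.
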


Since $\bm{Q}$ is symmetric and strictly positive definite, $(\cdot, \cdot)_{\bm{Q}}$ is an inner product. 
Due to the periodic boundary condition, a direct calculation shows that
\begin{equation}\label{eq:a_stab}
	\mathfrak{a}(\bm{u},\bm{u}) = (\bm{Q}\bm{u},\bm{u}). 
\end{equation}

\subsection{A priori Estimates}

The purpose of this subsection is to derive some a priori estimates for the spherical harmonic equation \eqref{eq:rte_scale_Pn_all}.

\begin{lemma}\label{lem:pri_1}
	Assume that \eqref{as:rte_scale} holds. Let $\bm{u}=\begin{bmatrix}u_1 & u_2 & \cdots & u_{L}\end{bmatrix}^{\tran}$ be the solution of \eqref{eq:rte_scale_Pn_all}. Then the following estimates hold: 
	\begin{equation*}\label{lem:priori_est_1}
		\|u_1\| = \mathcal{O}(1) \text{ and } \|u_i\| = \mathcal{O}(\varepsilon), \; i=2,3,\cdots,L.
	\end{equation*}
	Furthermore, we have
	\[ \|\bm{u}\|=\mathcal{O}(1+\varepsilon). \]
\end{lemma}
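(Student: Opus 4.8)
The plan is to obtain everything from one energy identity. First I would test the variational formulation \eqref{eq:rte_var_form} with $\bm{v}=\bm{u}$; by the coercivity identity \eqref{eq:a_stab} this collapses to $\|\bm{u}\|_{\bm{Q}}^2 = \mathfrak{a}(\bm{u},\bm{u}) = \varepsilon(\bm{f},\bm{u})$, where $\|\cdot\|_{\bm{Q}}$ is the weighted norm from \eqref{eq:Q_norm}. The existence of the $\bm{u}$ being estimated is guaranteed by \Cref{th:Pn_wellposed}. Everything then reduces to extracting the individual moments from this single relation, so the substantive work lies entirely in bounding the right-hand side with the correct $\varepsilon$-weights.

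To bound the right-hand side I would not use the plain Cauchy--Schwarz inequality $\varepsilon(\bm{f},\bm{u})\le\varepsilon\|\bm{f}\|\,\|\bm{u}\|$, since that discards the favorable scaling of $\bm{Q}$. Instead I insert $(\sqrt{\bm{Q}})^{-1}\sqrt{\bm{Q}}=\bm{I}$ and pair against the $\bm{Q}$-norm: $\varepsilon(\bm{f},\bm{u})=\varepsilon\big((\sqrt{\bm{Q}})^{-1}\bm{f},\sqrt{\bm{Q}}\bm{u}\big)\le\varepsilon\,\|(\sqrt{\bm{Q}})^{-1}\bm{f}\|\,\|\bm{u}\|_{\bm{Q}}$. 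Cancelling one power of $\|\bm{u}\|_{\bm{Q}}$ yields $\|\bm{u}\|_{\bm{Q}}\le\varepsilon\,\|(\sqrt{\bm{Q}})^{-1}\bm{f}\|$. Using the explicit diagonal form from \eqref{eq:def_Q}, namely $(\sqrt{\bm{Q}})^{-1}=\diag\big((\varepsilon\sigma_{\mathrm{a}})^{-1/2},(\varepsilon/\sigma_{\mathrm{t}})^{1/2}\bm{I}_{L-1}\big)$, together with the lower bounds $\sigma_{\mathrm{a}}\ge\sigma_{\mathrm{a}}^{\mathrm{min}}$ and $\sigma_{\mathrm{t}}>\sigma_{\mathrm{a}}^{\mathrm{min}}$ from \eqref{as:rte_scale_2}, one gets $\varepsilon^2\|(\sqrt{\bm{Q}})^{-1}\bm{f}\|^2\lesssim\varepsilon\|f_1\|^2+\varepsilon^3\|\bm{f}_{\ge2}\|^2\lesssim\varepsilon\|\bm{f}\|^2$, where $\varepsilon<1$ is used to absorb $\varepsilon^3\le\varepsilon$ and I write $\bm{f}=(f_1,\bm{f}_{\ge2})^{\tran}$ for the split into the zeroth and higher moments. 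Since $\bm{f}=\langle\bm{m}f\rangle$ is independent of $\varepsilon$ and finite by \eqref{as:rte_scale_3}, this gives the master bound $\|\bm{u}\|_{\bm{Q}}^2\lesssim\varepsilon\|\bm{f}\|^2$.

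Finally I would read off the components from the explicit expansion $\|\bm{u}\|_{\bm{Q}}^2=\int_X\varepsilon\sigma_{\mathrm{a}}u_1^2\ud\bx+\int_X\frac{\sigma_{\mathrm{t}}}{\varepsilon}\sum_{i\ge2}u_i^2\ud\bx$. Retaining only the first term and using $\sigma_{\mathrm{a}}\ge\sigma_{\mathrm{a}}^{\mathrm{min}}$ gives $\varepsilon\sigma_{\mathrm{a}}^{\mathrm{min}}\|u_1\|^2\le\|\bm{u}\|_{\bm{Q}}^2\lesssim\varepsilon\|\bm{f}\|^2$; the two factors of $\varepsilon$ cancel, leaving $\|u_1\|=\mathcal{O}(1)$. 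Retaining only the higher-moment term and using $\sigma_{\mathrm{t}}>\sigma_{\mathrm{a}}^{\mathrm{min}}$ gives $\frac{\sigma_{\mathrm{a}}^{\mathrm{min}}}{\varepsilon}\|\bm{u}_{\ge2}\|^2\le\|\bm{u}\|_{\bm{Q}}^2\lesssim\varepsilon\|\bm{f}\|^2$, so $\|\bm{u}_{\ge2}\|^2\lesssim\varepsilon^2\|\bm{f}\|^2$ and hence $\|u_i\|\le\|\bm{u}_{\ge2}\|=\mathcal{O}(\varepsilon)$ for each $i\ge2$. The aggregate bound then follows from subadditivity, $\|\bm{u}\|\le\|u_1\|+\|\bm{u}_{\ge2}\|=\mathcal{O}(1)+\mathcal{O}(\varepsilon)=\mathcal{O}(1+\varepsilon)$.

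I expect the only genuine obstacle to be the $\varepsilon$-bookkeeping rather than any analytic difficulty: the source enters as $\varepsilon\bm{f}$ while $\bm{Q}$ weights the zeroth moment by $\varepsilon$ and all higher moments by $1/\varepsilon$, and it is precisely this mismatch that must be tracked to convert the uniform $\varepsilon$ in the data into the $\mathcal{O}(1)$/$\mathcal{O}(\varepsilon)$ dichotomy. An unweighted Cauchy--Schwarz or Young's inequality would correctly bound $\|u_1\|$ but would only yield $\|u_i\|=\mathcal{O}(1)$ for the higher moments; the decisive step is to absorb each block of $\bm{u}$ against its own $\bm{Q}$-weight. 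No compactness, regularity, or structural property of $\bm{A}$ beyond the coercivity identity \eqref{eq:a_stab} is needed.
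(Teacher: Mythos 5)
Your proof is correct and takes essentially the same route as the paper: both test the variational form with $\bm{v}=\bm{u}$ to obtain $(\bm{Q}\bm{u},\bm{u})=\varepsilon(\bm{f},\bm{u})$ and then estimate the right-hand side using the $\bm{Q}$-weights, your matrix-weighted Cauchy--Schwarz with cancellation being algebraically equivalent to the paper's componentwise Young's inequality with absorption. The only (harmless for this lemma) difference is that by collapsing $\varepsilon\|f_1\|^2+\varepsilon^3\|\bm{f}_{\ge 2}\|^2$ into $\varepsilon\|\bm{f}\|^2$ you discard the sharper componentwise bounds $\|u_i\|\lesssim\varepsilon\|f_1\|+\varepsilon^2\sum_{j\ge 2}\|f_j\|$ of \eqref{eq:priori_est_1_entry}, which the paper records and later reuses when bootstrapping the regularity estimates for $\partial_j\bm{u}$.
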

\begin{proof}
	Taking $\bm{v}=\bm{u}$ in \eqref{eq:rte_var_form} and noting \eqref{eq:a_stab}, we have
	\begin{equation}\label{eq:est_iden_1}
		(\bm{Q}\bm{u},\bm{u}) = \varepsilon (\bm{f},\bm{u}).
	\end{equation}
	Applying the Cauchy–Schwarz inequality to \eqref{eq:est_iden_1} yields 
	\begin{multline*}
		\varepsilon(\sigma_{\mathrm{a}}u_1,u_1)+\frac{1}{\varepsilon}\sum_{i=2}^{L}(\sigma_{\mathrm{t}}u_i,u_i)
		=\varepsilon\sum_{i=1}^{L}(f_i,u_i)\\
		\le \varepsilon(\|f_1/\sqrt{\sigma_{\mathrm{a}}}\|^2/2+\|\sqrt{\sigma_{\mathrm{a}}} u_1\|^2/2) + \varepsilon\sum_{i=2}^{L}\left(\varepsilon^2\|f_i/\sqrt{\sigma_{\mathrm{t}}}\|^2/2+\frac{\|\sqrt{\sigma_{\mathrm{t}}}u_i\|^2}{2\varepsilon^2}\right),
	\end{multline*}
	from which we obtain
	\[ \frac{\varepsilon}{2}(\sigma_{\mathrm{a}}u_1,u_1)+\frac{1}{2\varepsilon}\sum_{i=2}^{L}(\sigma_{\mathrm{t}}u_i,u_i) \le \frac{\varepsilon \|f_1/\sqrt{\sigma_{\mathrm{a}}}\|^2}{2} +  \varepsilon^3\sum_{i=2}^{L}\|f_i/\sqrt{\sigma_{\mathrm{t}}}\|^2/2, \]
	i.e.,
	\begin{equation*}
		\left\{\begin{aligned}
			\varepsilon (\sigma_{\mathrm{a}}u_1,u_1)&\le \varepsilon \|f_1/\sqrt{\sigma_{\mathrm{a}}}\|^2 +  \varepsilon^3\sum_{i=2}^{L}\|f_i/\sqrt{\sigma_{\mathrm{t}}}\|^2,\\
			\frac{1}{\varepsilon}(\sigma_{\mathrm{t}}u_i,u_i)&\le \varepsilon \|f_1/\sqrt{\sigma_{\mathrm{a}}}\|^2 +  \varepsilon^3\sum_{i=2}^{L}\|f_i/\sqrt{\sigma_{\mathrm{t}}}\|^2,\quad i=2,3,\cdots,L.
		\end{aligned}\right.
	\end{equation*}
	We deduce that 
	\begin{equation}\label{eq:priori_est_1_entry}
		\|u_1\| \lesssim \|f_1\|+\varepsilon\sum_{i=2}^{L}\|f_i\|,\quad \|u_i\| \lesssim \varepsilon\|f_1\|+\varepsilon^2\sum_{i=2}^{L}\|f_i\|, \; i=2,3,\cdots,L,
	\end{equation}
	which proves the lemma.
\end{proof}

\subsection{Discontinuous {G}alerkin method for spatial discretization}\label{sec:DG}
Equation~\eqref{eq:rte_scale_Pn_all} is a first-order symmetric hyperbolic system in space, which will be discretized by the DG method in this section. 

Let $\mathcal{T}_h$ be a regular family of partition of the domain $X=(0,1)^3$ with elements $K$.
To avoid unnecessary technicalities, we assume the meshes on the boundary to be periodic, i.e., the surface meshes on two opposite parallel faces of $\partial X$ are identical.
Let $\mathcal{E}^{\mathrm{int}}_h$ be the set of all interior faces of $\mathcal{T}_h$ which includes all inner surfaces in each $K$.
Define $h_K:= \textrm{diam}(K)$ and $h:=\max_{K\in \mathcal{T}_h}h_K$.
Denote by $\bn_K=\begin{bmatrix}n_1^K & n_2^K & n_3^K\end{bmatrix}^{\tran}$ the unit outward normal to $\partial K$ with respect to the element $K$,
and $e$ the generic interface of a mesh element $K\in \mathcal{T}_h$.
For an interface $e\subset \partial K$, we use $\bn_e$ to denote the unit outward normal to $e$ with respect to $K$.

Let  $V_{h}^k$ be a discontinuous Galerkin finite element space whose elements are polynomials of degree no more than $k$ when restricted to any element $K$, i.e., 
\begin{equation}\label{eq:poly_space}
	V_h^k=\begin{cases}
		\{v\in L^2(\mathbb{S}); \;v|_K\in \mathbb{P}_k(K)\;\forall K\in \mathcal{T}_h\}, &\text{ if } \mathcal{T}_h \text{ is triangular/tetrahedral}, \\
		\{v\in L^2(\mathbb{S}); \;v|_K\in \mathbb{Q}_k(K)\;\forall K\in \mathcal{T}_h\}, &\text{ if } \mathcal{T}_h \text{ is rectangular/cuboidal},
	\end{cases}
\end{equation}
where $k$ is a nonnegative integer, $\mathbb{P}_k(K)$ denotes the set of all polynomials on $K$ of a total degree no more than $k$, and $\mathbb{Q}_k(K) = \{\sum_j c_j p_j(x)q_j(y)r_j(z): p_j,q_j,r_j$ polynomials of degree $\le k\}$. Define $\bm{V}_{h}^\ell=[V_{h}^k]^{2\ell+1}$, $\ell=0,1,\cdots,N$,
and the Cartesian product $\bm{V}_h= \bm{V}_{h}^0\times \bm{V}_{h}^1\times \cdots \times \bm{V}_{h}^N = [V_{h}^k]^{L}$. 

Denote $E+F:=\{u+v:\, u\in E, \, v\in F\}$, for any two function spaces $E$ and $F$. For any given element $K\in\mathcal{T}_h\subset \mathbb{R}^3$, and any function $\bm{f}\in \bm{V}_h + [C(\overline{X})]^L$, we define the inside and outside values of $\bm{f}$ (with respect to $K$) on the interface $\partial K$ as, respectively,
\begin{subequations}
	\begin{align*}
	\bm{f}^- (\bx) &=\lim\limits_{\epsilon \to 0^+}\bm{f}(\bx- \epsilon\,\bn_K(\bx)), \quad \;\;\;\forall \bx\in \partial K,\\
	\bm{f}^+ (\bx) &=\begin{cases}
	\displaystyle\lim_{\epsilon \to 0^+}\bm{f}(\bx + \epsilon\,\bn_K(\bx)), & \forall \bx\in \partial K\backslash\partial X,\\
	\bm{f}^- (\tilde{\bx}), & \forall \bx\in \partial K\cap\partial X,
	\end{cases}
	\end{align*}
\end{subequations}
where $\tilde{\bx}$ is a point on $\partial X$ corresponding to $\bx$ associated with the periodic boundary condition. Note that the extensions of the outside values on $\partial X$ reflect the periodic boundary conditions. The jump in $\bm{f}\in \bm{V}_h$ with respect to $K$ is $[\![\bm{f}]\!]=\bm{f}^+-\bm{f}^-$, and the average of $\bm{f}$ is denoted by
$\{\!\!\{\bm{f}\}\!\!\}=(\bm{f}^+ + \bm{f}^-)/2$.

For any $\bm{u}_h,\bm{v}_h\in \bm{V}_h + [C(\overline{X})]^L$, we define
\begin{subequations}
	\begin{align}\label{eq:a}
	\mathfrak{a}_h(\bm{u}_h,\bm{v}_h) &= \sum_{K\in\mathcal{T}_h}\bigg\{ \Big(\bn_K\cdot\overrightarrow{\bm{A}\bm{u}_h}, \, \bm{v}_h^-\Big)_{\partial K} - \sum_{i=1}^3(\bm{A}^{(i)}\bm{u}_h, \partial_i \bm{v}_h)_K + (\bm{Q}\bm{u}_h,\bm{v}_h)_K \bigg\}, \\
	\mathfrak{f}(\bm{v}_h) &= \varepsilon\sum_{K\in\mathcal{T}_h}(\bm{f},\bm{v}_h)_K.
	\end{align}
\end{subequations}
Here, following \cite{HLL1983}, we define the numerical flux $\bn_K\cdot\overrightarrow{\bm{A}\bm{u}_h}$ as
\begin{equation}\label{eq:flux_Pn}
	\bn_K\cdot\overrightarrow{\bm{A}\bm{u}_h} = \bn_K\cdot\bm{A}\{\!\!\{\bm{u}_h\}\!\!\} -\frac{1}{2}|\bn_K|\cdot\bm{D}\,[\![\bm{u}_h]\!],	
\end{equation}
where $|\bn_K|\cdot\bm{D} = \sum_{i=1}^3 |n_i^K|\bm{D}^{(i)}$ and, assuming the upwind flux, $\bm{D}^{(i)}=|\bm{A}^{(i)}|$. Applying integration by parts to \eqref{eq:a} leads to an equivalent form of the bilinear form $\mathfrak{a}_h$, which will be useful later:
\begin{equation*}\label{eq:def_a_2}
\mathfrak{a}_h(\bm{u}_h,\bm{v}_h) = \sum_{K\in\mathcal{T}_h}\bigg\{(\bm{A}\cdot\nabla\bm{u}_h, \bm{v}_h)_K - (\bm{u}_h^-, \bn_K\cdot\overleftarrow{\bm{A}\bm{v}_h})_{\partial K} + (\bm{Q}\bm{u}_h,\bm{v}_h)_K\bigg\},
\end{equation*}
where the downwind trace $\bn_K\cdot\overleftarrow{\bm{A}\bm{v}_h}$ is defined by
\begin{equation*}
\bn_K\cdot\overleftarrow{\bm{A}\bm{v}_h} = \bn_K\cdot\bm{A}\{\!\!\{\bm{v}_h\}\!\!\} +\frac{1}{2}|\bn_K|\cdot\bm{D}\,[\![\bm{v}_h]\!].
\end{equation*}
We can present the fully discrete spherical harmonic DG scheme for \eqref{eq:rte_scale_all} as follows.
\begin{prob}\label{prob:SH_DG}
	Find $\bm{u}_h\in \bm{V}_h$ such that
	\begin{equation}\label{eq:bilinear}
	\mathfrak{a}_h(\bm{u}_h,\bm{v}_h) = \mathfrak{f}(\bm{v}_h) \quad \forall \bm{v}_h\in \bm{V}_h.
	\end{equation}
\end{prob}
\begin{remark}
	Note that the periodic boundary condition \eqref{eq:rte_scale_Pn_bc} is not imposed into the definition of the DG finite element space. Instead, we use it implicitly in the definition of the numerical flux \eqref{eq:flux_Pn} during the construction of the spherical harmonics DG method.
\end{remark}
\begin{remark}\label{rmk:a_ext}
	Following \Cref{lem:cont}, it is easy to validate that the discrete problem \eqref{eq:bilinear} is consistent, i.e., given the exact solution $\bm{u}$ of the $P_N$ model \eqref{eq:rte_scale_Pn_all}, we have $\mathfrak{a}_h(\bm{u},\bm{v}_h) = \mathfrak{f}(\bm{v}_h)$ for any $\bm{v}_h\in \bm{V}_h$, and therefore, the following Galerkin orthogonality holds
	\begin{equation}\label{eq:go}
	\mathfrak{a}_h(\bm{u}-\bm{u}_h,\bm{v}_h) = 0 \quad \forall \bm{v}_h\in \bm{V}_h.
	\end{equation}
\end{remark}

To study the stability and well-posedness of \Cref{prob:SH_DG}, we define the following norm: 
\begin{equation}\label{def:norm}
|||\bm{v}_h|||_h = \left(\frac{1}{4}\sum_{K\in\mathcal{T}_h} \Big(|\bn_K|\cdot\bm{D}\,[\![\bm{v}_h]\!], [\![\bm{v}_h]\!]\Big)_{\partial K} + (\bm{Q}\bm{v}_h,\bm{v}_h)\right)^\frac{1}{2}, \quad \forall \bm{v}_h\in \bm{V}_{h}.
\end{equation}
Note that, for $i=1,2,3$, $0\le |n_i^K|\le 1$ and $\|\bn_K\|=1$. Since  $\bm{D}^{(i)}$ are positive semi-definite and $\bm{Q}$ is positive definite for any $\varepsilon>0$, we verify that $|||\cdot|||_h$ is a norm.

\begin{lemma}[Stability]\label{lem:stab}
	Under conditions \eqref{as:rte_scale_1} and \eqref{as:rte_scale_2}, we have
	\begin{equation}\label{eq:stab}
	|||\bm{v}_h|||^2_h = \mathfrak{a}_h(\bm{v}_h,\bm{v}_h), \quad \forall \bm{v}_h\in \bm{V}_h.
	\end{equation}
\end{lemma}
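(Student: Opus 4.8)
The plan is to substitute $\bm{u}_h=\bm{v}_h$ into $\mathfrak{a}_h$ and show that the advective (skew-symmetric) contributions cancel, leaving exactly the jump-penalty and $\bm{Q}$-weighted terms that make up $|||\bm{v}_h|||_h^2$. The starting point is to rewrite the volume transport term on each element. Since each $\bm{A}^{(i)}$ is constant and symmetric, $(\bm{A}^{(i)}\bm{v}_h,\partial_i\bm{v}_h)_K=\tfrac12\int_K\partial_i\big(\bm{v}_h^{\tran}\bm{A}^{(i)}\bm{v}_h\big)\ud\bx$, so the divergence theorem on $K$ yields $-\sum_{i=1}^3(\bm{A}^{(i)}\bm{v}_h,\partial_i\bm{v}_h)_K=-\tfrac12(\bn_K\cdot\bm{A}\bm{v}_h^-,\bm{v}_h^-)_{\partial K}$. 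Adding the flux term $(\bn_K\cdot\overrightarrow{\bm{A}\bm{v}_h},\bm{v}_h^-)_{\partial K}$ and expanding $\{\!\!\{\bm{v}_h\}\!\!\}=(\bm{v}_h^++\bm{v}_h^-)/2$, the $\bm{v}_h^-$ pieces recombine so that the transport part of $\mathfrak{a}_h(\bm{v}_h,\bm{v}_h)$ collapses on each $\partial K$ to $\tfrac12(\bn_K\cdot\bm{A}\bm{v}_h^+,\bm{v}_h^-)_{\partial K}-\tfrac12(|\bn_K|\cdot\bm{D}[\![\bm{v}_h]\!],\bm{v}_h^-)_{\partial K}$.

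Next I would convert the element-by-element sum into a sum over interior faces $\mathcal{E}^{\mathrm{int}}_h$, where the faces on $\partial X$ are treated as interior through the periodic extension of the outside trace $\bm{v}_h^+$. On a face $e$ shared by $K$ and $K'$ the outward normals obey $\bn_{K'}=-\bn_K$ while $|\bn_{K'}|=|\bn_K|=|\bn_e|$, and the inside/outside traces interchange. Using the symmetry of $\bn_e\cdot\bm{A}$, the two $\bm{A}$-contributions cancel identically, so every transport term containing $\bm{A}$ disappears. For the penalty part, writing $\bm{w}=[\![\bm{v}_h]\!]$ on $e$ relative to $K$, the two contributions combine into $\tfrac12(|\bn_e|\cdot\bm{D}\bm{w},\bm{w})_e$; summing over interior faces and using that $\sum_{K}(\cdot)_{\partial K}$ counts each shared face twice reproduces exactly $\tfrac14\sum_{K}(|\bn_K|\cdot\bm{D}[\![\bm{v}_h]\!],[\![\bm{v}_h]\!])_{\partial K}$.

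Finally, the reaction terms assemble into $\sum_K(\bm{Q}\bm{v}_h,\bm{v}_h)_K=(\bm{Q}\bm{v}_h,\bm{v}_h)$, and collecting the pieces gives $\mathfrak{a}_h(\bm{v}_h,\bm{v}_h)=|||\bm{v}_h|||_h^2$. The main obstacle is the bookkeeping of face orientations together with the periodic pairing of the boundary faces: one must verify both that the sign flip $\bn_{K'}=-\bn_K$ combined with the symmetry of each $\bm{A}^{(i)}$ forces the advective terms to cancel, and that the double counting of each shared face supplies the precise factor relating $\tfrac14\sum_{\partial K}$ to $\tfrac12\sum_e$. The positive semidefiniteness of $\bm{D}^{(i)}=|\bm{A}^{(i)}|$ and the positive definiteness of $\bm{Q}$ are what later make the resulting quantity a genuine norm, but they play no role in the identity itself.
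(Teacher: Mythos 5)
Your proposal is correct and follows essentially the same route as the paper's proof: element-wise integration by parts on the advection term (you use the divergence identity $\bm{v}_h^{\tran}\bm{A}^{(i)}\partial_i\bm{v}_h=\tfrac12\partial_i(\bm{v}_h^{\tran}\bm{A}^{(i)}\bm{v}_h)$ directly, where the paper passes through the diagonalization $\bm{A}^{(i)}=\mathcal{Q}_i\varLambda^{(i)}\mathcal{Q}_i^{\tran}$, which is the same use of symmetry), followed by face-pairing with $\bn_{K'}=-\bn_K$ and the periodic identification to cancel the cross terms of $\bm{A}$ and assemble the jump penalty with the factor $\tfrac14$. The only difference is organizational—you cancel the local boundary quadratic terms before pairing faces, while the paper rearranges the flux term across faces first and then cancels against the volume term—but the ingredients and the resulting identity are identical.
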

\begin{proof}
	For any $\bm{v}_h\in \bm{V}_h$, we have, by the definition \eqref{eq:a}, that $\mathfrak{a}_h(\bm{v}_h,\bm{v}_h) = \mathrm{I} +\Pi + \mathrm{III}$, where
	\begin{align*}
	\mathrm{I} &:=  \sum_{K\in\mathcal{T}_h} \sum_{i=1}^3 \Big(n_i^K\bm{A}^{(i)}\{\!\!\{\bm{v}_h\}\!\!\} -\frac{1}{2}|n_i^K|\bm{D}^{(i)}[\![\bm{v}_h]\!], \bm{v}^{-}_h\Big)_{\partial K}, \\
	\mathrm{II} &:=  -\sum_{K\in\mathcal{T}_h} \sum_{i=1}^3 (\bm{A}^{(i)}\bm{v}_h,\partial_i \bm{v}_h)_K, \quad
	\mathrm{III} :=  \sum_{K\in\mathcal{T}_h} (\bm{Q}\bm{v}_h,\bm{v}_h)_K.
	\end{align*}
	For the first term, since $A$ is symmetric, we can simplify it as	\begin{equation*}
	\mathrm{I}
	=  \sum_{K\in\mathcal{T}_h}  \frac{1}{2}\Big(\bn_K\cdot\bm{A} \bm{v}^{-}_h, \bm{v}^{-}_h\Big)_{\partial K} + \sum_{K\in\mathcal{T}_h} \frac{1}{4} \Big(|\bn_K|\cdot\bm{D}\,[\![\bm{v}_h]\!], [\![\bm{v}_h]\!]\Big)_{\partial K}. 
	\end{equation*}
	Since $(\bm{A}^{(i)}\bm{v}_h,\partial_i \bm{v}_h)_K=(\varLambda^{(i)} \mathcal{Q}_i^{\tran}\bm{v}_h,\partial_i \mathcal{Q}_i^{\tran}\bm{v}_h)_K$, the second term can be handled as:
	\begin{equation*}
	\mathrm{II}
	= -\sum_{K\in\mathcal{T}_h} \sum_{i=1}^3 \frac{1}{2} (n_i^K\varLambda^{(i)} \mathcal{Q}_i^{\tran}\bm{v}^{-}_h, \mathcal{Q}_i^{\tran}\bm{v}^{-}_h)_{\partial K} 
	= -\sum_{K\in\mathcal{T}_h} \frac{1}{2} (\bn_K\cdot\bm{A}\bm{v}^{-}_h, \bm{v}^{-}_h)_{\partial K}. 
	\end{equation*}
	Therefore, we have $\mathrm{I} + \mathrm{II} = \sum_{K\in\mathcal{T}_h} \frac{1}{4} \Big(|\bn_K|\cdot\bm{D}\,[\![\bm{v}_h]\!], [\![\bm{v}_h]\!]\Big)_{\partial K}$, and \eqref{eq:stab} follows.
\end{proof}

All norms in the finite-dimensional space $\bm{V}_h$ should be equivalent, which ensures the continuity of the bilinear form. The following corollary is a direct consequence of the above result, due to the well-known Lax–Milgram lemma.
\begin{corollary}
	Under the assumptions \eqref{as:rte_scale_1} and \eqref{as:rte_scale_2}, the spherical harmonic DG method \eqref{eq:bilinear} has a unique solution.
\end{corollary}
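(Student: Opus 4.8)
The plan is to invoke the Lax--Milgram lemma on the finite-dimensional Hilbert space $(\bm{V}_h, |||\cdot|||_h)$, for which I need three ingredients: coercivity of $\mathfrak{a}_h$, boundedness of $\mathfrak{a}_h$, and boundedness of the linear functional $\mathfrak{f}$. Coercivity is essentially free: \Cref{lem:stab} furnishes the identity $\mathfrak{a}_h(\bm{v}_h,\bm{v}_h) = |||\bm{v}_h|||_h^2$ for every $\bm{v}_h \in \bm{V}_h$, so the coercivity constant equals $1$. Since $|||\cdot|||_h$ has already been verified to be a genuine norm on $\bm{V}_h$, this identity alone shows that $\mathfrak{a}_h(\bm{v}_h,\bm{v}_h)=0$ forces $\bm{v}_h = \bm{0}$.

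Next I would establish boundedness. The only subtlety is that $\mathfrak{a}_h$ contains the volumetric term $\sum_{i}(\bm{A}^{(i)} \bm{u}_h, \partial_i \bm{v}_h)_K$, whose control is not immediate from $|||\cdot|||_h$, which only sees the jumps and the $\bm{Q}$-weighted mass. However, $\bm{V}_h$ is finite-dimensional, so all norms on it are equivalent; in particular the broken $H^1$ seminorm $\bm{v}_h \mapsto (\sum_K \|\bm{v}_h\|_{1,K}^2)^{1/2}$ is bounded by a constant multiple of $|||\bm{v}_h|||_h$. Combining this with the Cauchy--Schwarz inequality on each element and each face, together with the boundedness of the entries of $\bm{A}^{(i)}$, $\bm{D}^{(i)}$, and $\bm{Q}$, yields $|\mathfrak{a}_h(\bm{u}_h,\bm{v}_h)| \lesssim |||\bm{u}_h|||_h\, |||\bm{v}_h|||_h$. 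The functional $\mathfrak{f}(\bm{v}_h) = \varepsilon \sum_K (\bm{f}, \bm{v}_h)_K$ is likewise bounded: Cauchy--Schwarz gives $|\mathfrak{f}(\bm{v}_h)| \le \varepsilon \|\bm{f}\|\, \|\bm{v}_h\|$, and \Cref{lem:norms_rescale} bounds $\|\bm{v}_h\| \le (\sqrt{\varepsilon}+1/\sqrt{\varepsilon})\|\bm{v}_h\|_{\bm{Q}} \le (\sqrt{\varepsilon}+1/\sqrt{\varepsilon})|||\bm{v}_h|||_h$, the last step because $\|\bm{v}_h\|_{\bm{Q}}^2$ is one of the two nonnegative summands defining $|||\bm{v}_h|||_h^2$.

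With coercivity, boundedness, and a bounded right-hand side in hand, the Lax--Milgram lemma supplies a unique $\bm{u}_h \in \bm{V}_h$ solving \eqref{eq:bilinear}. Alternatively, and perhaps more transparently, because \eqref{eq:bilinear} is a square linear system on the finite-dimensional space $\bm{V}_h$, existence is equivalent to uniqueness, and uniqueness follows at once from the coercivity identity: if $\bm{u}_h$ solves the homogeneous problem then $|||\bm{u}_h|||_h^2 = \mathfrak{a}_h(\bm{u}_h,\bm{u}_h) = 0$, hence $\bm{u}_h = \bm{0}$. I expect the only genuine (though still routine) point to be the boundedness verification, as it is the single place where finite-dimensionality, through norm equivalence, is actually invoked; coercivity and the linear-algebra reduction are immediate.
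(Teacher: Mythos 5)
Your proposal is correct and follows the paper's own route exactly: the paper also combines the coercivity identity of \Cref{lem:stab} with continuity of $\mathfrak{a}_h$ obtained from equivalence of norms on the finite-dimensional space $\bm{V}_h$, and then invokes the Lax--Milgram lemma. Your added details (the explicit boundedness verification and the alternative square-linear-system argument) are sound elaborations of what the paper leaves implicit.
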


\section{Error analysis}\label{sec:err_ana}
In this section, we provide the error analysis of the SH-DG approximation for solving the RTE \eqref{eq:rte_scale_all} with periodic boundary conditions. Let $u$ and $\bm{u}_h$ be the solutions of \eqref{eq:rte_scale_all} and \eqref{eq:bilinear}, respectively. The total numerical error $u-\bm{m}^{\tran}\bm{u}_h$ can be divided into two parts:
\begin{equation*}
	u-\bm{m}^{\tran}\bm{u}_h = (u- \bm{m}^{\tran}\bm{u}) + (\bm{m}^{\tran}\bm{u}-\bm{m}^{\tran}\bm{u}_h).
\end{equation*}
Since $\bm{m}^{\tran}\bm{u}=u_{P_N}$ in \eqref{eq:u_appx}, the first part stands for the error due to the angular discretization, while the second part accounts for the error of the spatial discretization. 

\subsection{Error analysis for the angular discretization}
We start with unraveling the angular discretization error $\tau(\bx,\bomega) := u(\bx,\bomega) - u_{P_N}(\bx,\bomega)$ arising from the semi-discretization \eqref{eq:rte_scale_pre_Pn}.
Define $\mathcal{P}_N u = \bm{m}^{\tran}\langle \bm{m}u\rangle$, then $\mathcal{P}_N$ is the $L^2$-orthogonal projection of a generic function $u$ on $\mathbb{S}$ onto $\mathbb{P}_N$. The following two lemmas will be useful in the proof of the main result. 
\begin{lemma}\label{lem:interp_ang} \cite[Theorem 2.1]{STW2011}
	Assume $v\in H^q(\mathbb{S})$ for some $q>0$, then
	\begin{equation*}
	\|v-\mathcal{P}_N v\|_{L^2(\mathbb{S})} \lesssim N^{-q}\|v\|_{H^q(\mathbb{S})}.
	\end{equation*}
\end{lemma}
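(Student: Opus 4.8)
The plan is to exploit the spectral structure of $\mathcal{P}_N$ relative to the eigenbasis of the Laplace--Beltrami operator $\Delta_{\mathbb{S}}$ on $\mathbb{S}=\mathbb{S}^2$. The spherical harmonics $\{m_\ell^\kappa\}$ form a complete orthonormal system in $L^2(\mathbb{S})$ (consistent with $\langle \bm{m}\bm{m}^{\tran}\rangle=\bm{I}$) and are exactly the eigenfunctions of $-\Delta_{\mathbb{S}}$, with $-\Delta_{\mathbb{S}} m_\ell^\kappa = \ell(\ell+1)\,m_\ell^\kappa$. First I would expand the given $v\in H^q(\mathbb{S})$ in this basis, $v = \sum_{\ell=0}^\infty \sum_{|\kappa|\le\ell}\hat v_\ell^\kappa m_\ell^\kappa$ with $\hat v_\ell^\kappa = \langle m_\ell^\kappa v\rangle$, so that by the very definition of the projection $\mathcal{P}_N v = \sum_{\ell=0}^N \sum_{|\kappa|\le\ell}\hat v_\ell^\kappa m_\ell^\kappa$ retains only the modes of degree at most $N$. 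By Parseval's identity the projection error is the tail sum
\[
\|v-\mathcal{P}_N v\|_{L^2(\mathbb{S})}^2 = \sum_{\ell=N+1}^\infty \sum_{|\kappa|\le\ell}|\hat v_\ell^\kappa|^2.
\]

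The second step is to invoke the spectral characterization of the Sobolev norm on the sphere, namely that $\|v\|_{H^q(\mathbb{S})}^2$ is equivalent to $\sum_{\ell,\kappa}\big(1+\ell(\ell+1)\big)^q |\hat v_\ell^\kappa|^2$. With this in hand, the estimate follows from a single weight-extraction: for every $\ell\ge N+1$ the factor $\big(1+\ell(\ell+1)\big)^{-q}$ is maximized at $\ell=N+1$, whence
\[
\|v-\mathcal{P}_N v\|_{L^2(\mathbb{S})}^2 \le \big(1+(N+1)(N+2)\big)^{-q}\sum_{\ell=N+1}^\infty\sum_{|\kappa|\le\ell}\big(1+\ell(\ell+1)\big)^q|\hat v_\ell^\kappa|^2 \le \big(1+(N+1)(N+2)\big)^{-q}\|v\|_{H^q(\mathbb{S})}^2.
\]
Since $\big(1+(N+1)(N+2)\big)^{-q}\lesssim N^{-2q}$, taking square roots yields the claimed bound $\|v-\mathcal{P}_N v\|_{L^2(\mathbb{S})}\lesssim N^{-q}\|v\|_{H^q(\mathbb{S})}$.

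The only nontrivial ingredient, and hence the main obstacle, is the justification of the spectral characterization of $\|\cdot\|_{H^q(\mathbb{S})}$ for general, possibly non-integer, $q>0$. For integer $q$ the weighted sum equals $\|(I-\Delta_{\mathbb{S}})^{q/2}v\|_{L^2(\mathbb{S})}^2$, because $I-\Delta_{\mathbb{S}}$ has eigenvalues $1+\ell(\ell+1)$, and its equivalence with the usual Sobolev norm (sum of $L^2$ norms of covariant derivatives up to order $q$) is standard elliptic theory on the compact manifold $\mathbb{S}$. For fractional $q$ one either defines $H^q(\mathbb{S})$ directly through these spectral weights or obtains the equivalence by complex interpolation between consecutive integer orders. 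Since the cited source \cite{STW2011} adopts the spectral definition, this step reduces to the definition and the whole argument collapses to the two displays above; otherwise I would supply the equivalence via the functional calculus of $I-\Delta_{\mathbb{S}}$ and interpolation before proceeding.
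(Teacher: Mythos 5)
The paper does not prove this lemma at all: it is imported verbatim from the cited reference \cite[Theorem 2.1]{STW2011}, so there is no internal proof to compare against. Your argument---Parseval's identity for the spherical-harmonic expansion, the characterization of $H^q(\mathbb{S})$ through the eigenvalues $\ell(\ell+1)$ of $-\Delta_{\mathbb{S}}$, and extraction of the smallest tail weight $\big(1+(N+1)(N+2)\big)^{-q}\lesssim N^{-2q}$---is correct, and it is precisely the standard proof underlying the cited result; you also correctly isolate the one genuinely nontrivial ingredient (the spectral equivalence of the $H^q(\mathbb{S})$ norm for possibly non-integer $q$) and indicate how to justify it via functional calculus and interpolation when it is not taken as the definition.
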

\begin{lemma}\label{lem:uni_bnd_A}\cite{FHK2016}
	The matrix $\bm{A}^{(i)}_{\ell,\ell+1}$, $i=1,2,3$ defined in \eqref{eq:SH_rec} is uniformly bounded in the induced $2$-norm.
\end{lemma}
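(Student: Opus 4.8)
The plan is to bypass the explicit associated-Legendre recursion coefficients altogether and instead exploit the functional-analytic origin of the blocks $\bm{A}^{(i)}_{\ell,\ell+1}$. The starting point is the observation that, for each $i\in\{1,2,3\}$, multiplication by $\omega_i$ is a \emph{contraction} on $L^2(\mathbb{S})$. Indeed, since $\omega_i^2\le\omega_1^2+\omega_2^2+\omega_3^2=1$ pointwise on $\mathbb{S}$, one has $\langle(\omega_i v)^2\rangle\le\langle v^2\rangle$ for all $v\in L^2(\mathbb{S})$, so that the operator $M_i\colon v\mapsto\omega_i v$ has $L^2(\mathbb{S})$-operator norm at most $1$, uniformly in $i$. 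The blocks $\bm{A}^{(i)}_{\ell,\ell+1}$ are precisely the off-diagonal compressions of $M_i$ to spherical-harmonic degrees, so their norms ought to inherit this uniform bound.

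Concretely, I would identify the block entries with matrix coefficients of $M_i$ in the orthonormal basis $\bm{m}$: by the definition $\bm{A}^{(i)}_{\ell,\ell'}=\langle\omega_i\bm{m}_\ell\bm{m}^{\tran}_{\ell'}\rangle$, one has $\big(\bm{A}^{(i)}_{\ell,\ell+1}\big)_{\kappa\kappa'}=\langle\omega_i\, m_\ell^\kappa\, m_{\ell+1}^{\kappa'}\rangle$. The key step is then a bilinear-form estimate for the induced $2$-norm. For arbitrary $\bm{a}\in\mathbb{R}^{2\ell+1}$ and $\bm{b}\in\mathbb{R}^{2\ell+3}$, set $g=\sum_\kappa a_\kappa m_\ell^\kappa$ and $h=\sum_{\kappa'} b_{\kappa'} m_{\ell+1}^{\kappa'}$; orthonormality $\langle\bm{m}\bm{m}^{\tran}\rangle=\bm{I}$ gives $\langle g^2\rangle=|\bm{a}|^2$ and $\langle h^2\rangle=|\bm{b}|^2$ in the Euclidean norm. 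I would then estimate, via Cauchy--Schwarz and $\omega_i^2\le1$,
\begin{equation*}
\big|\bm{a}^{\tran}\bm{A}^{(i)}_{\ell,\ell+1}\bm{b}\big|
=\big|\langle(\omega_i g)\,h\rangle\big|
\le\langle(\omega_i g)^2\rangle^{1/2}\,\langle h^2\rangle^{1/2}
\le\langle g^2\rangle^{1/2}\,\langle h^2\rangle^{1/2}
=|\bm{a}|\,|\bm{b}|.
\end{equation*}
Taking the supremum over unit vectors $\bm{a},\bm{b}$ and using $\|\bm{M}\|_2=\sup\{|\bm{a}^{\tran}\bm{M}\bm{b}|:|\bm{a}|=|\bm{b}|=1\}$ yields $\|\bm{A}^{(i)}_{\ell,\ell+1}\|_2\le1$ for every $\ell$ and every $i$, which is exactly the claimed uniform bound (indeed with the sharp constant $1$).

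I expect the only point requiring care — more a subtlety than a genuine obstacle — to be the identification of the block with the compression $P_\ell M_i P_{\ell+1}$ of $M_i$, i.e.\ verifying that the Euclidean norm of a coefficient vector equals the $L^2(\mathbb{S})$ norm of the corresponding harmonic expansion; this is immediate from $\langle\bm{m}\bm{m}^{\tran}\rangle=\bm{I}$. A fully elementary alternative would be to insert the explicit recursion coefficients for the associated Legendre functions and bound each block through $\|\cdot\|_2\le(\|\cdot\|_1\|\cdot\|_\infty)^{1/2}$, using that every row and column of $\bm{A}^{(i)}_{\ell,\ell+1}$ has only boundedly many nonzero entries, each of size $\mathcal{O}(1)$ in $\ell$. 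The operator-norm argument above is cleaner, however, since it treats all three components $i=1,2,3$ simultaneously and delivers the optimal constant without any factorial bookkeeping.
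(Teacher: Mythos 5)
Your proof is correct. The paper gives no argument of its own for this lemma---it is quoted directly from \cite{FHK2016}---and your compression argument is exactly the standard one: $\bm{A}^{(i)}_{\ell,\ell+1}$ is the matrix of the multiplication operator $v\mapsto\omega_i v$ compressed between the degree-$\ell$ and degree-$(\ell+1)$ harmonic subspaces, the orthonormality $\langle\bm{m}\bm{m}^{\tran}\rangle=\bm{I}$ identifies Euclidean norms of coefficient vectors with $L^2(\mathbb{S})$ norms, and $\omega_i^2\le 1$ on $\mathbb{S}$ together with Cauchy--Schwarz and the bilinear-form characterization of the spectral norm yields $\|\bm{A}^{(i)}_{\ell,\ell+1}\|_2\le 1$ uniformly in $\ell$ and $i$, which is the claimed bound with the sharp constant.
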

The following bound for $\bm{A}^{(i)}$, $i=1,2,3$ follows from the recursion relation \eqref{eq:SH_rec}.
\begin{corollary}\label{cor:uni_bnd_A}
	The matrix $\bm{A}^{(i)}$, $i=1,2,3$ is uniformly bounded in the induced $2$-norm.
\end{corollary}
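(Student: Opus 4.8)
The plan is to exploit the block-tridiagonal structure of $\bm{A}^{(i)}$ displayed in \eqref{eq:A_struc} together with the uniform block bound supplied by \Cref{lem:uni_bnd_A}. Since $\bm{A}^{(i)}$ is symmetric, its induced $2$-norm coincides with its spectral radius, so it suffices to bound $\|\bm{A}^{(i)}\|_2$ by a constant independent of $N$.

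First I would split $\bm{A}^{(i)}$ into its block-superdiagonal and block-subdiagonal parts. Writing $\bm{U}$ for the matrix whose only nonzero blocks are the $\bm{A}^{(i)}_{\ell,\ell+1}$ placed in block-position $(\ell,\ell+1)$, the recursion relation \eqref{eq:SH_rec} and the symmetry identity $\big(\bm{A}^{(i)}_{\ell,\ell'}\big)^{\tran}=\bm{A}^{(i)}_{\ell',\ell}$ give the decomposition $\bm{A}^{(i)} = \bm{U} + \bm{U}^{\tran}$. By the triangle inequality, $\|\bm{A}^{(i)}\|_2 \le \|\bm{U}\|_2 + \|\bm{U}^{\tran}\|_2 = 2\|\bm{U}\|_2$, so the problem reduces to bounding $\|\bm{U}\|_2$.

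Next I would estimate $\|\bm{U}\|_2$ directly. For any $\bm{v} = \begin{bmatrix}\bm{v}_0^{\tran} & \cdots & \bm{v}_N^{\tran}\end{bmatrix}^{\tran}$, the block-shift pattern of $\bm{U}$ yields $(\bm{U}\bm{v})_\ell = \bm{A}^{(i)}_{\ell,\ell+1}\bm{v}_{\ell+1}$ for $0 \le \ell \le N-1$, and each input block $\bm{v}_{\ell+1}$ feeds exactly one output block. Hence $\|\bm{U}\bm{v}\|^2 = \sum_{\ell=0}^{N-1}\|\bm{A}^{(i)}_{\ell,\ell+1}\bm{v}_{\ell+1}\|^2 \le \big(\max_{0\le\ell\le N-1}\|\bm{A}^{(i)}_{\ell,\ell+1}\|_2^2\big)\|\bm{v}\|^2$, which shows $\|\bm{U}\|_2 \le \max_{0\le\ell\le N-1}\|\bm{A}^{(i)}_{\ell,\ell+1}\|_2$. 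Invoking \Cref{lem:uni_bnd_A}, this maximum is bounded by a constant $C$ independent of $N$, so $\|\bm{A}^{(i)}\|_2 \le 2C$ uniformly in $N$, which is the claim.

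I do not anticipate a genuine obstacle here: the result is essentially bookkeeping once the decomposition $\bm{A}^{(i)} = \bm{U} + \bm{U}^{\tran}$ is in hand. The only point requiring a little care is the estimate $\|\bm{U}\|_2 \le \max_\ell \|\bm{A}^{(i)}_{\ell,\ell+1}\|_2$; it holds because the block-superdiagonal pattern sends distinct input blocks to distinct output blocks, so that, up to a block-shift permutation, $\bm{U}$ is block-diagonal with diagonal blocks $\bm{A}^{(i)}_{\ell,\ell+1}$, and the spectral norm of a block-diagonal matrix is the largest of the block norms.
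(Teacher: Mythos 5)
Your proof is correct and follows essentially the same route as the paper, which states the corollary as an immediate consequence of the block-tridiagonal structure \eqref{eq:A_struc} and the uniform block bound of \Cref{lem:uni_bnd_A}; your decomposition $\bm{A}^{(i)} = \bm{U} + \bm{U}^{\tran}$ and the estimate $\|\bm{U}\|_2 \le \max_\ell \|\bm{A}^{(i)}_{\ell,\ell+1}\|_2$ simply make that one-line justification rigorous. No gaps: the direct computation of $\|\bm{U}\bm{v}\|^2$ handles the varying block sizes $n_\ell = 2\ell+1$ correctly, so the concluding bound $\|\bm{A}^{(i)}\|_2 \le 2C$ uniform in $N$ is sound.
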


The main result is the following theorem on the error of angular discretization. The proof follows \cite{SW2021} with additional attention given to the parameter $\varepsilon$. 
\begin{theorem}\label{thm:ang_est}
	Let $u_{P_N}=\bm{m}^{\tran}\bm{u}$ be the solution to \eqref{eq:rte_scale_Pn_all}. Assume the solution $u$ to \eqref{eq:rte_scale_all} satisfies the additional regularity assumptions
	\begin{equation}\label{as:reg_Omega}
	\text{for some } q>0, \,  u\in L^2(X; H^q(\mathbb{S})), \; \partial_i u\in L^2(X; H^q(\mathbb{S})),\; i=1,2,3.
	\end{equation}
	Then
	\begin{equation*}
	\| u- u_{P_N}\|_{L^2(X\times\mathbb{S})}\lesssim N^{-q} \left( \|u\|_{L^2(X; H^q(\mathbb{S}))} + \sum_{i=1}^{3}\|\partial_i u\|_{L^2(X; H^q(\mathbb{S}))} \right).
	\end{equation*}
\end{theorem}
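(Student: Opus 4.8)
The plan is to follow the usual Galerkin error-splitting for spectral discretizations, but to track the powers of $\veps$ carefully so that the final bound is uniform. Introduce the $L^2(\mathbb{S})$-orthogonal projection $\mathcal{P}_N$ and write
$\tau = u - u_{P_N} = \eta + \xi$ with $\eta := u - \mathcal{P}_N u$ and $\xi := \mathcal{P}_N u - u_{P_N}\in\mathbb{P}_N$. The projection part is immediate: applying \Cref{lem:interp_ang} pointwise in $\bx$ and integrating over $X$ gives $\|\eta\|_{L^2(X\times\mathbb{S})}\lesssim N^{-q}\|u\|_{L^2(X;H^q(\mathbb{S}))}$. The work is in controlling $\xi$.

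For $\xi$, I would subtract the weak $P_N$ relation \eqref{eq:rte_scale_pre_Pn} from \eqref{eq:rte_scale} tested against the same $v\in\mathbb{P}_N$ and integrated over $X$, yielding the angular Galerkin orthogonality
\[
\int_X\Big\langle v\big(\bomega\cdot\nabla\tau + \tfrac{\sigma_{\mathrm{t}}}{\veps}\tau - (\tfrac{\sigma_{\mathrm{t}}}{\veps}-\veps\sigma_{\mathrm{a}})\bar\tau\big)\Big\rangle\,\ud\bx = 0, \qquad \forall v\in\mathbb{P}_N.
\]
Choosing $v=\xi$ and substituting $\tau = \eta+\xi$, I would use three simplifications: (i) $\langle\xi\eta\rangle = 0$ pointwise in $\bx$ by orthogonality, which removes $\eta$ from the total-cross-section term; (ii) $\bar\eta = 0$ (constants lie in $\mathbb{P}_N$), so $\bar\tau = \bar\xi$; and (iii) $\int_X\langle\xi\,\bomega\cdot\nabla\xi\rangle = 0$ by periodicity of $\xi$. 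A Pythagorean split $\langle\xi^2\rangle = \langle(\xi-\bar\xi)^2\rangle + \langle1\rangle\bar\xi^2$ then collapses the surviving total and scattering terms exactly into the weighted norm of \eqref{eq:Q_norm}, producing the energy identity
\[
\|\xi\|_{\bm{Q}}^2 = -\int_X\langle\xi\,\bomega\cdot\nabla\eta\rangle\,\ud\bx.
\]

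The crux is bounding this transport residual uniformly in $\veps$. Writing $\langle\xi\,\omega_i\partial_i\eta\rangle = \langle(\omega_i\xi)\,\partial_i\eta\rangle$ and noting that $\partial_i\eta = (I-\mathcal{P}_N)\partial_i u$ is supported on angular degrees $\ge N+1$ while $\omega_i\xi\in\mathbb{P}_{N+1}$, the recursion \eqref{eq:SH_rec} shows that only the degree-$(N+1)$ components interact, and that this component of $\omega_i\xi$ arises solely from the degree-$N$ block $\bm{\xi}_N$. Using \Cref{lem:uni_bnd_A} for the block $\bm{A}^{(i)}_{N,N+1}$ and \Cref{lem:interp_ang} for the degree-$(N+1)$ part of $\partial_i u$, one gets $|\int_X\langle\xi\,\bomega\cdot\nabla\eta\rangle|\lesssim \|\bm{\xi}_N\|\,N^{-q}\sum_{i}\|\partial_i u\|_{L^2(X;H^q(\mathbb{S}))}$. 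The decisive point is that for $N\ge1$ the block $\bm{\xi}_N$ consists of higher moments carrying the large weight $\sigma_{\mathrm{t}}/\veps$ in $\bm{Q}$, so by \eqref{as:rte_scale_2} one has $\|\bm{\xi}_N\|\lesssim\sqrt{\veps}\,\|\xi\|_{\bm{Q}}$; inserting this and cancelling one factor of $\|\xi\|_{\bm{Q}}$ gives $\|\xi\|_{\bm{Q}}\lesssim\sqrt{\veps}\,N^{-q}\sum_i\|\partial_i u\|_{L^2(X;H^q(\mathbb{S}))}$. Finally \Cref{lem:norms_rescale} converts this to $\|\xi\|\lesssim(\sqrt\veps+\tfrac{1}{\sqrt\veps})\sqrt{\veps}\,N^{-q}(\cdots) = (1+\veps)N^{-q}(\cdots)$, and combining with the bound on $\eta$ and $\veps<1$ yields the theorem.

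I expect the main obstacle to be precisely this $\veps$-bookkeeping in the transport term. A naive Cauchy--Schwarz bound $|\int_X\langle\xi\,\bomega\cdot\nabla\eta\rangle|\le\|\xi\|\,\|\bomega\cdot\nabla\eta\|$ would pass a factor $(\sqrt\veps+1/\sqrt\veps)^2\sim\veps^{-1}$ through \Cref{lem:norms_rescale} and destroy uniformity. Avoiding this requires the degree-$(N+1)$ localization so that the angular projection error is paired specifically against the high-order block $\bm{\xi}_N$, which alone supplies the compensating $\sqrt{\veps}$. (The scaling needs $N\ge1$; the degenerate case $N=0$ corresponds to the diffusion limit and is not reached by this argument.)
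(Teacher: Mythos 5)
Your proposal is correct and follows essentially the same route as the paper's proof: the same splitting $\tau=\eta+\xi$, the same angular Galerkin orthogonality tested with $\xi$, the same energy identity in the $\bm{Q}$-weighted norm, and the same localization of the transport residual to the degree-$N$ block $\bm{\xi}_N$ paired against the degree-$(N+1)$ moments via \eqref{eq:SH_rec}, \Cref{lem:uni_bnd_A}, and \Cref{lem:interp_ang}. The only cosmetic differences are that you cancel a factor of $\|\xi\|_{\bm{Q}}$ where the paper uses Young's inequality and absorbs $\tfrac{\sigma_{\mathrm{t}}}{2\varepsilon}\|\bm{\xi}_N\|^2$ into the left-hand side, and you make explicit the (correct) restriction $N\ge 1$ that the paper leaves implicit.
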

\begin{proof}
    We split the error $\tau = u - u_{P_N}$ into the projection error $\eta=u-\mathcal{P}_N u$ and a remainder $\xi=\mathcal{P}_N u - u_{P_N} = \bm{m}^{\tran}(\langle \bm{m}u\rangle - \bm{u})$ which is an element of $\mathbb{P}_N$, that is, $	\tau = \eta + \xi$. Therefore,
	\begin{equation}\label{eq:ang_err_split}
	\|\tau\|_{L^2(X\times\mathbb{S})} \le \|\eta\|_{L^2(X\times\mathbb{S})} + \|\xi\|_{L^2(X\times\mathbb{S})}.
	\end{equation}
	By the assumption \eqref{as:reg_Omega} and \Cref{lem:interp_ang}, we have
	\begin{equation}\label{eq:ang_intp}
	\|\eta\|_{L^2(X\times\mathbb{S})}=\| u-\mathcal{P}_N u \|_{L^2(X\times\mathbb{S})} \lesssim N^{-q}\|u\|_{L^2(X; H^q(\mathbb{S}))}.
	\end{equation}
	
	The next step is to estimate $\xi$. First, it follows from \eqref{eq:rte_scale} and \eqref{eq:rte_scale_pre_Pn} that, for all $v\in \mathbb{P}_N$,
	\begin{equation}\label{eq:error_total}
	\left\langle\left(\bm{\omega}\cdot \nabla \tau +\frac{\sigma_{\mathrm{t}}}{\varepsilon}\tau\right)v\right\rangle
	= \left\langle\left(\frac{\sigma_{\mathrm{t}}}{\varepsilon}-\varepsilon\sigma_{\mathrm{a}}\right)v\bar{\tau}\right\rangle.
	\end{equation}
	Denote $\bm{\xi}=\langle \bm{m}u\rangle - \bm{u}$, then $\xi = \bm{m}^{\tran}\bm{\xi}$.
	Taking $v=\xi$ in \eqref{eq:error_total} gives
	\begin{multline}\label{eq:error_eq}
	\langle \xi(\bm{\omega}\cdot \nabla \eta)\rangle + \frac{1}{2}\langle \bm{\omega}\cdot \nabla (\xi)^2\rangle + \frac{\sigma_{\mathrm{t}}}{\varepsilon}\langle\xi^2\rangle + \frac{\sigma_{\mathrm{t}}}{\varepsilon}\langle\eta\xi\rangle \\
	= \left(\frac{\sigma_{\mathrm{t}}}{\varepsilon}-\varepsilon\sigma_{\mathrm{a}}\right)\langle \bm{\xi}^{\tran}\bm{m}\, \bar{\eta}\rangle + \left(\frac{\sigma_{\mathrm{t}}}{\varepsilon}-\varepsilon\sigma_{\mathrm{a}}\right)\langle \bm{\xi}^{\tran}\bm{m}\, \overline{\bm{m}^{\tran}\bm{\xi}}\rangle.
	\end{multline}
	By the recursion relation \eqref{eq:SH_rec} of the spherical harmonics, 
	\begin{multline*}
	\langle \xi(\bm{\omega}\cdot \nabla \eta)\rangle = \langle \bm{\xi}^{\tran}\bm{m}\,(\bm{\omega}\cdot \nabla \eta)\rangle = \bm{\xi}^{\tran}  \nabla\cdot\langle\bm{\omega}\, \bm{m}\eta\rangle = \bm{\xi}^{\tran} \nabla\cdot\langle  \bm{A}\, \bm{m}\eta\rangle \\
	= \bm{\xi}^{\tran} \nabla\cdot \langle \bm{A}\, \bm{m} (u-\mathcal{P}_N u)\rangle
	= \bm{\xi}^{\tran}_N \bm{A}_{N,N+1}\cdot \nabla \langle \bm{m}_{N+1} u\rangle.
	\end{multline*}
	Since $\mathcal{P}_N$ is an orthogonal projection on $\mathbb{S}$, $\sigma_{\mathrm{t}}\langle\eta\xi\rangle=0$, and
	\begin{equation*}
	\langle \bm{\xi}^{\tran}\bm{m}\, \bar{\eta}\rangle =  \bm{\xi}^{\tran}\langle\bm{m}\, \bar{\eta}\rangle
	= \bm{\xi}^{\tran}(\bm{I}-\bm{R})\langle \bm{m}(u-\mathcal{P}_N u)\rangle =0.
	\end{equation*}
	The last term on the right-hand side of \eqref{eq:error_eq} can be rewritten as
	\begin{equation*}
	\langle \bm{\xi}^{\tran}\bm{m}\, \overline{\bm{m}^{\tran}\bm{\xi}}\rangle =
	\bm{\xi}^{\tran}\langle \bm{m}\, \overline{\bm{m}^{\tran}}\rangle\bm{\xi} =
	\bm{\xi}^{\tran} \langle \bm{m}\, (\bm{I}-\bm{R})\bm{m}^{\tran}\rangle\bm{\xi} = \bm{\xi}^{\tran} (\bm{I}-\bm{R})\bm{\xi}.
	\end{equation*}
	Therefore, \eqref{eq:error_eq} can be reformulated into
	\begin{equation*}
	\bm{\xi}^{\tran}_N \bm{A}_{N,N+1}\cdot \nabla \langle \bm{m}_{N+1} u\rangle + \frac{1}{2}\langle \bm{\omega}\cdot \nabla (\bm{\xi}^{\tran}\bm{\xi})\rangle + \varepsilon\sigma_{\mathrm{a}}\bm{\xi}^{\tran}\bm{\xi} + \left(\frac{\sigma_{\mathrm{t}}}{\varepsilon}-\varepsilon\sigma_{\mathrm{a}}\right)\bm{\xi}^{\tran} \bm{R}\bm{\xi} = 0.
	\end{equation*}
	Integrating the above equation over $X$ and noting that $\xi$ is periodic on $\partial X$, and therefore $\int_{\partial X}\bomega\cdot\bn\, (\bm{\xi}^{\tran}\bm{\xi})\ud\bx\ud\bomega=0$, we have
	\begin{equation*}
	(\bm{A}_{N,N+1}\cdot \nabla \langle \bm{m}_{N+1} u\rangle, \bm{\xi}_N) + (\varepsilon\sigma_{\mathrm{a}}\bm{\xi},\bm{\xi}) + \left(\left(\frac{\sigma_{\mathrm{t}}}{\varepsilon}-\varepsilon\sigma_{\mathrm{a}}\right)\bm{R}\bm{\xi}, \bm{\xi}\right) = 0.
	\end{equation*}
	Since $\bm{R}$ is positive semi-definite, applying the Cauchy–Schwarz inequality yields 
	\begin{equation*}
		(\bm{Q}\bm{\xi},\bm{\xi})\le \|\bm{A}_{N,N+1}\cdot \nabla \langle \bm{m}_{N+1} u\rangle\|\cdot \|\bm{\xi}_N\|\le \frac{\varepsilon}{2\sigma_{\mathrm{t}}}\|\bm{A}_{N,N+1}\cdot \nabla \langle \bm{m}_{N+1} u\rangle\|^2 + \frac{\sigma_{\mathrm{t}}}{2\varepsilon}\|\bm{\xi}_N\|^2.
	\end{equation*}
	Therefore, by the definition \eqref{eq:def_Q} of $\bm{Q}$,
	\begin{equation}\label{eq:err_cls}
		\|\sqrt{\bm{Q}}\bm{\xi}\|  \lesssim \varepsilon\|\bm{A}_{N,N+1}\cdot \nabla \langle \bm{m}_{N+1} u\rangle\|.
	\end{equation} 
	The term on the right-hand side of the above inequality can be considered as the closure error. 
	Using Lemma~\ref{lem:interp_ang} and Lemma~\ref{lem:uni_bnd_A}, the closure error in \eqref{eq:err_cls} can be further estimated as
	\begin{align*}
	\|\bm{A}_{N,N+1}\cdot \nabla \langle \bm{m}_{N+1} u\rangle\|
	&\lesssim \sum_{i=1}^{3}\|\langle \bm{m}_{N+1} \partial_i u\rangle\| = \sum_{i=1}^{3}\|\langle (\mathcal{P}_{N+1}- \mathcal{P}_N) \partial_i u\rangle\|\\
	& \le \sum_{i=1}^{3}\left(\|\langle (\mathcal{I} - \mathcal{P}_{N+1} )\partial_i u\rangle\| + \|\langle (\mathcal{I} - \mathcal{P}_{N} )\partial_i u\rangle\| \right)\\
	& \le  2\sum_{i=1}^{3}\|\langle (\mathcal{I}- \mathcal{P}_N) \partial_i u\rangle\| 
	\lesssim  N^{-q}\sum_{i=1}^{3}\|\partial_i u\|_{L^2(X; H^q(\mathbb{S}))}.
	\end{align*}
	This, together with \eqref{eq:ang_err_split}, \eqref{eq:ang_intp} and \eqref{eq:err_cls}, completes the proof.
\end{proof}

\subsection{Error Analysis for the numerical solution of the spherical harmonic equation}\label{sec:asym_error_analysis}
For the purpose of error analysis, we make the following assumptions on the regularity of $\bm{u}$.
\begin{assumption}\label{assupt:reg_1}
	For some $r>1$, $\bm{u}\in \left[H^r(X)\right]^L$, and furthermore,
	\begin{equation}\label{eq:reg_assump}
		\|u_1\|_{r,X} = \mathcal{O}(1) \text{ and } \|u_i\|_{r,X} = \mathcal{O}(\varepsilon), \; i=2,3,\cdots,L.
	\end{equation}
	In other words, we assume
	\begin{equation}\label{eq:reg_assump_u}
		\|\bm{u}\|_{r,X}=\mathcal{O}(1+\varepsilon).
	\end{equation}
\end{assumption}
\begin{remark}
	From \Cref{lem:pri_1}, we know that the above assumptions are true when $r=0$. Assume $\sigma_{\mathrm{t}}(\bx)$, $\sigma_{\mathrm{s}}(\bx)$, and $f(\bx)$ are smooth enough and $\partial_j\bm{u}(\bx) = \partial_j\bm{u}(\bx+\mathbf{k})$ on $\partial X$. Taking any partial derivative $\partial_j$ ($j=1,2,3$) of \eqref{eq:rte_scale_Pn} with respect to $x$, $y$ or $z$, respectively, we have
	\begin{equation*}
		\bm{A}\cdot \nabla (\partial_j\bm{u}) + \bm{Q}(\partial_j\bm{u}) = \varepsilon\partial_j\bm{f} -\varepsilon(\partial_j\sigma_{\mathrm{a}})\bm{u} - \partial_j\left(\frac{\sigma_{\mathrm{t}}}{\varepsilon}-\varepsilon\sigma_{\mathrm{a}}\right) \bm{R}\bm{u} \quad \text{ in } X.		
	\end{equation*}
	By the estimate \eqref{eq:priori_est_1_entry}, we have 
	\begin{equation*}
		\|\partial_j u_1\|\lesssim \|\partial_j f_1\|+\|u_1\|= \mathcal{O}(1), 
	\end{equation*}
	and
	\begin{equation*}
		\|\partial_j u_i\|\lesssim \varepsilon (\|f_1\|+\|u_1\|) + \varepsilon^2\sum_{i=2}^{L}\left(\|f_i\|+\|u_i\|+\|u_i\|/\varepsilon^2\right)=\mathcal{O}(\varepsilon), \quad i=2,3,\cdots,L.
	\end{equation*}
	Hence, \Cref{lem:pri_1} remains true for $\partial_i\bm{u}$, i.e., \Cref{assupt:reg_1} holds for $r=1$. Results for $r\ge2$ can also be derived analogously.
\end{remark}

We begin with two lemmas which will be used later. The first lemma gives the convergence rate of the interpolation. For any $K\in\mathcal{T}_h$, define $\mathcal{I}_h$ to be an interpolation operator from $L^2(K)$ onto $\mathbb{P}_k(K)$ which satisfies the following requirements: (i) the restriction of $\mathcal{I}_h$ to any face $e\subset K$ is uniquely determined by the interpolation points on $e$; and (ii) two adjacent elements have the same interpolation points on their shared edge/surface. For example, the commonly employed nodal finite elements satisfy such requirements (see, e.g., \cite[Chapter 3]{BS2008} or \cite[Theorem 2.2.1]{C2002}). Then by using the trace theorem and the scaling argument, we can easily obtain the following result (cf. \cite{AH2009,BS2008,C2002}).
\begin{lemma}[Polynomial interpolation error estimate]\label{lem:interp}
	For all $v\in H^r(K)$ with $r>0$ and $K\in \mathcal{T}_h$,
	\begin{equation*}\label{eq:interp_est}
	\|(I-\mathcal{I}_h) v\|_{q,K}\lesssim h_K^{\min \{r,k+1\}-q}\|v\|_{r,K}, \quad \|(I-\mathcal{I}_h) v\|_{0,\partial K}\lesssim h_K^{\min \{r,k+1\}-1/2}\|v\|_{r,K},
	\end{equation*}
	where $k$ is the degree of the polynomial used in \eqref{eq:poly_space} for the definition of $V^k_h$. 
\end{lemma}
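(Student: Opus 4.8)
The plan is to follow the classical three-step finite element argument: pull back to a fixed reference element, invoke the Bramble--Hilbert lemma there, and scale the resulting estimate back to $K$. Fix a reference element $\widehat{K}$ (the unit simplex or the unit cube, according to whether $\mathcal{T}_h$ is simplicial or tensor-product) and let $F_K(\widehat{\bx}) = B_K\widehat{\bx} + \bm{b}_K$ be the affine map with $F_K(\widehat{K}) = K$. Writing $\widehat{v} = v\circ F_K$ for the pullback, the decisive structural fact is that the interpolation operator commutes with this pullback, i.e.\ $\widehat{\mathcal{I}_h v} = \widehat{\mathcal{I}}\,\widehat{v}$, where $\widehat{\mathcal{I}}$ is the nodal interpolant on $\widehat{K}$. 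This follows from requirements (i)--(ii) on $\mathcal{I}_h$: the interpolation points on $K$ are the images under $F_K$ of those on $\widehat{K}$, so nodal values are preserved under the change of variables. Consequently $(I-\mathcal{I}_h)v$ pulls back to $(I-\widehat{\mathcal{I}})\widehat{v}$, and it suffices to bound the latter on $\widehat{K}$.

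On the reference element I would apply the Bramble--Hilbert lemma. Set $s=\min\{r,k+1\}$. Since $\widehat{\mathcal{I}}$ reproduces every polynomial of degree at most $k$ and $s\le k+1$, the operator $I-\widehat{\mathcal{I}}$ annihilates $\mathbb{P}_{s-1}(\widehat{K})$ (in the tensor-product case one uses $\mathbb{Q}_k\supset\mathbb{P}_k$, so reproduction up to total degree $k$ still holds). The Bramble--Hilbert lemma then yields $\|(I-\widehat{\mathcal{I}})\widehat{v}\|_{q,\widehat{K}}\lesssim |\widehat{v}|_{s,\widehat{K}}$, where $|\cdot|_{s,\widehat{K}}$ is the $H^s$ seminorm. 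For the boundary estimate I would first apply the trace theorem on $\widehat{K}$, namely $\|\widehat{w}\|_{0,\partial\widehat{K}}\lesssim\|\widehat{w}\|_{1,\widehat{K}}$, to $\widehat{w}=(I-\widehat{\mathcal{I}})\widehat{v}$, and then bound the right-hand side once more by $|\widehat{v}|_{s,\widehat{K}}$ through Bramble--Hilbert.

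It remains to transfer these reference estimates to $K$ by scaling. Shape regularity of $\mathcal{T}_h$ gives $\|B_K\|\lesssim h_K$, $\|B_K^{-1}\|\lesssim h_K^{-1}$, and $|\det B_K|\simeq h_K^d$ with $d=3$. Tracking the chain rule through each derivative and the Jacobian factor through each integral produces the two-sided scaling relations $|v|_{q,K}\lesssim h_K^{d/2-q}|\widehat{v}|_{q,\widehat{K}}$, its inverse $|\widehat{v}|_{s,\widehat{K}}\lesssim h_K^{s-d/2}|v|_{s,K}$, and the surface analogue $\|w\|_{0,\partial K}\simeq h_K^{(d-1)/2}\|\widehat{w}\|_{0,\partial\widehat{K}}$. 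Composing the reference estimates with these relations gives $\|(I-\mathcal{I}_h)v\|_{q,K}\lesssim h_K^{s-q}|v|_{s,K}$ and $\|(I-\mathcal{I}_h)v\|_{0,\partial K}\lesssim h_K^{s-1/2}|v|_{s,K}$, and bounding the seminorm by the full norm, $|v|_{s,K}\le\|v\|_{r,K}$, finishes both estimates.

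The step I expect to require the most care is the application of the Bramble--Hilbert lemma when $r$, and hence $s$, is not an integer: the classical statement is phrased for integer-order spaces, so for fractional $s$ one must invoke the fractional Bramble--Hilbert lemma, or equivalently interpolate between the integer-order estimates using real ($K$-method) interpolation of Sobolev spaces. A secondary technical point is verifying the pullback commutation and the seminorm scaling for non-integer $s$, where the chain rule must be interpreted through Gagliardo--Slobodeckij seminorms; both are standard but deserve a brief remark. With these in place the estimates follow as claimed.
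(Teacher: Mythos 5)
Your proposal is correct and takes essentially the same route as the paper: the paper gives no written-out proof, stating only that the lemma follows ``by using the trace theorem and the scaling argument'' with citations to standard finite element references, and your reference-element pullback, Bramble--Hilbert, trace, and scaling steps are exactly that standard argument. The only caveat---that nodal interpolation requires $r$ large enough for point values to be defined, and fractional-order Bramble--Hilbert for non-integer $s$---is inherited from the paper's own statement for all $r>0$ and is appropriately flagged in your write-up rather than being a gap you introduced.
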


The second lemma is similar to the second Strang lemma in the error analysis of nonconforming finite element methods but in the format of vector arguments (cf.~\cite{HHE2010}). 
\begin{lemma}\label{lem:stab_0}
	Let $\{\bm{Z}_h\}_{h>0}:=\{[Z_h]^L\}_{h>0}$ be a family of finite-dimensional product spaces equipped with norms $\{\|\cdot\|_h\}_{h>0}$. Let $\mathfrak{b}_h(\cdot,\cdot)$ be a uniformly coercive bilinear form over $\bm{Z}_h\times \bm{Z}_h$, i.e., there exists a positive constant $\gamma$ independent of $h$ such that
	\begin{equation*}\label{eq:stab_lem}
	\gamma\|\bm{z}_h\|^2_h\le \mathfrak{b}_h(\bm{z}_h,\bm{z}_h), \quad \forall \bm{z}_h\in \bm{Z}_h.
	\end{equation*}
	Let $\bm{Z}=[Z]^L$ be the product space of an (infinite dimensional) function space $Z$ and assume $\bm{v}\in \bm{Z}$ is a vector of functions and $\mathfrak{b}_h$ can be extended to $(\bm{Z}+\bm{Z}_h)\times \bm{Z}_h$ such that $|\mathfrak{b}_h(\bm{v},\bm{z}_h)|<C$ for all $\bm{z}_h\in \bm{Z}_h$, and $\bm{v}_h$ is an element in $\bm{Z}_h$ satisfying
	\begin{equation*}\label{eq:rte_scale_lem_1_asspt2}
	\mathfrak{b}_h(\bm{v}-\bm{v}_h,\bm{z}_h)=0, \quad \forall \bm{z}_h\in \bm{Z}_h.
	\end{equation*}
	Furthermore, norms $\|\cdot\|_h$ can also be defined on $\bm{Z}+\bm{Z}_h$.
	Then
	\begin{equation}\label{eq:error_inq1}
	\|\bm{v}-\bm{v}_h\|_h\le \inf_{\bm{z}_h\in \bm{Z}_h}\left\{\|\bm{v}-\bm{z}_h\|_h + \frac{1}{\gamma}\sup_{\hat{\bm{z}}_h\in \bm{Z}_h}\frac{\mathfrak{b}_h(\bm{v}-\bm{z}_h,\hat{\bm{z}}_h)}{\|\hat{\bm{z}}_h\|_h}\right\}.
	\end{equation}
\end{lemma}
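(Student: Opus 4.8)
The plan is to follow the classical proof of the second Strang lemma, transported to the vector-valued product-space setting. Fix an arbitrary $\bm{z}_h\in\bm{Z}_h$ and split the error as $\bm{v}-\bm{v}_h=(\bm{v}-\bm{z}_h)+(\bm{z}_h-\bm{v}_h)$, so that the triangle inequality gives $\|\bm{v}-\bm{v}_h\|_h\le\|\bm{v}-\bm{z}_h\|_h+\|\bm{z}_h-\bm{v}_h\|_h$. The first summand is already of the desired form, so the whole task reduces to controlling the purely discrete quantity $\|\bm{w}_h\|_h$, where $\bm{w}_h:=\bm{z}_h-\bm{v}_h\in\bm{Z}_h$; the crucial point is that $\bm{w}_h$ is a genuine element of the finite-dimensional space, so the uniform coercivity hypothesis applies to it.

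Next I would invoke coercivity with the test function $\bm{w}_h$, obtaining $\gamma\|\bm{w}_h\|_h^2\le\mathfrak{b}_h(\bm{w}_h,\bm{w}_h)$. Inserting $\bm{v}$ into the first argument and using linearity gives $\mathfrak{b}_h(\bm{w}_h,\bm{w}_h)=\mathfrak{b}_h(\bm{z}_h-\bm{v},\bm{w}_h)+\mathfrak{b}_h(\bm{v}-\bm{v}_h,\bm{w}_h)$, and the Galerkin orthogonality hypothesis (valid because $\bm{w}_h\in\bm{Z}_h$) annihilates the second term. Hence $\gamma\|\bm{w}_h\|_h^2\le\mathfrak{b}_h(\bm{z}_h-\bm{v},\bm{w}_h)=\mathfrak{b}_h(\bm{v}-\bm{z}_h,-\bm{w}_h)$. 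Since $-\bm{w}_h$ again lies in $\bm{Z}_h$ and $\|-\bm{w}_h\|_h=\|\bm{w}_h\|_h$, the definition of the supremum yields $\mathfrak{b}_h(\bm{v}-\bm{z}_h,-\bm{w}_h)\le\big(\sup_{\hat{\bm{z}}_h\in\bm{Z}_h}\mathfrak{b}_h(\bm{v}-\bm{z}_h,\hat{\bm{z}}_h)/\|\hat{\bm{z}}_h\|_h\big)\|\bm{w}_h\|_h$.

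Combining the last two bounds and dividing by $\|\bm{w}_h\|_h$ (the case $\bm{w}_h=\bm{0}$ being trivial) gives $\|\bm{w}_h\|_h\le\gamma^{-1}\sup_{\hat{\bm{z}}_h\in\bm{Z}_h}\mathfrak{b}_h(\bm{v}-\bm{z}_h,\hat{\bm{z}}_h)/\|\hat{\bm{z}}_h\|_h$; feeding this back into the triangle inequality and taking the infimum over $\bm{z}_h\in\bm{Z}_h$ produces \eqref{eq:error_inq1}. I do not anticipate a genuine obstacle here, since the statement is essentially formal once the hypotheses are granted. The only two points meriting a line of care are (i) the sign bookkeeping in the supremum, which I handle cleanly above by testing against $-\bm{w}_h$ rather than appealing to an absolute value—this works precisely because $\bm{Z}_h$ is a linear space; and (ii) confirming that every quantity $\mathfrak{b}_h(\bm{v}-\bm{z}_h,\hat{\bm{z}}_h)$ is meaningful for $\bm{v}\in\bm{Z}$, which is guaranteed by the assumed extension of $\mathfrak{b}_h$ to $(\bm{Z}+\bm{Z}_h)\times\bm{Z}_h$ together with the bound $|\mathfrak{b}_h(\bm{v},\bm{z}_h)|<C$.
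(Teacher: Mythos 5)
Your proof is correct and is precisely the standard second-Strang-lemma argument that the paper itself invokes only by citation (the lemma is stated there without proof, with a pointer to the nonconforming finite element literature): split the error through an arbitrary $\bm{z}_h\in\bm{Z}_h$, apply coercivity and Galerkin orthogonality to the discrete part $\bm{w}_h=\bm{z}_h-\bm{v}_h$, bound the resulting term by the supremum, and take the infimum over $\bm{z}_h$. Your two points of care---handling the sign by testing against $-\bm{w}_h\in\bm{Z}_h$ rather than an absolute value, and noting that $\mathfrak{b}_h(\bm{v}-\bm{z}_h,\hat{\bm{z}}_h)$ is well defined thanks to the assumed extension of $\mathfrak{b}_h$ to $(\bm{Z}+\bm{Z}_h)\times\bm{Z}_h$---are both sound and complete the argument.
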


\subsubsection{Error analysis for non-constant elements} 
We now establish the following error bound between the DG approximation $\bm{u}_h$ and the solution $\bm{u}$ to the $P_N$ system \eqref{eq:rte_scale_Pn_all} when $k\ge 1$.
\begin{theorem}\label{thm:X}
	Assume that \eqref{as:rte_scale_1}--\eqref{as:rte_scale_2} and \Cref{assupt:reg_1} hold. Consider only $k\ge 1$. 
	The spherical harmonic DG method \eqref{eq:bilinear} admits the following error estimate:
	\begin{equation}\label{eq:est_total_general}
	\|\bm{u}-\bm{u}_h\|_{\bm{Q}} \lesssim \sqrt{\varepsilon}\Big(h^{\min\{r,k+1\}-1} \big(1+\mathcal{O}(\varepsilon)\big) + h^{\min\{r,k+1\}} \Big).
	\end{equation}
\end{theorem}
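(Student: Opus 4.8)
The plan is to apply the abstract Strang-type estimate \Cref{lem:stab_0} with $\mathfrak{b}_h=\mathfrak{a}_h$, the product space $\bm{Z}_h=\bm{V}_h$, and the mesh-dependent norm $\|\cdot\|_h=|||\cdot|||_h$ from \eqref{def:norm}. The stability identity \eqref{eq:stab} furnishes coercivity with constant $\gamma=1$; the consistency/Galerkin orthogonality \eqref{eq:go} of \Cref{rmk:a_ext} supplies the hypothesis $\mathfrak{a}_h(\bm{u}-\bm{u}_h,\bm{z}_h)=0$ and also gives meaning to $\mathfrak{a}_h(\bm{u},\cdot)$. Choosing $\bm{z}_h=\mathcal{I}_h\bm{u}$, the interpolant of the exact $P_N$ solution, the bound \eqref{eq:error_inq1} reduces the task to estimating the approximation term $|||\bm{u}-\mathcal{I}_h\bm{u}|||_h$ and the consistency term $\sup_{\hat{\bm{z}}_h}\mathfrak{a}_h(\bm{u}-\mathcal{I}_h\bm{u},\hat{\bm{z}}_h)/|||\hat{\bm{z}}_h|||_h$. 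Since the jump contribution in \eqref{def:norm} is nonnegative we have $\|\bm{u}-\bm{u}_h\|_{\bm{Q}}\le|||\bm{u}-\bm{u}_h|||_h$, so controlling the right-hand side of \eqref{eq:error_inq1} will give \eqref{eq:est_total_general}.

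The structural simplification I would exploit is the continuity of the interpolation error $\bm{\theta}:=\bm{u}-\mathcal{I}_h\bm{u}$. As $r>1$ forces $\bm{u}\in[H^1(X)]^L$, its traces match across every interior face, so $[\![\bm{u}]\!]=\bm{0}$; and since $k\ge 1$, the matching interpolation points on shared faces make $\mathcal{I}_h\bm{u}$ globally continuous, so $[\![\mathcal{I}_h\bm{u}]\!]=\bm{0}$ (this is exactly where the restriction $k\ge 1$ is used). Hence $[\![\bm{\theta}]\!]=\bm{0}$, which both removes the jump term from $|||\bm{\theta}|||_h$, leaving $|||\bm{\theta}|||_h=\|\bm{\theta}\|_{\bm{Q}}$, and, more importantly, makes every surface contribution in the integrated-by-parts form of $\mathfrak{a}_h$ telescope: on each interior face the average part $\bn_K\cdot\bm{A}\{\!\!\{\hat{\bm{z}}_h\}\!\!\}$ and the upwind part $\tfrac12|\bn_K|\cdot\bm{D}[\![\hat{\bm{z}}_h]\!]$ carry opposite signs from the two adjacent elements while pairing against the single-valued $\bm{\theta}^-=\bm{\theta}$, and the boundary faces cancel by periodicity. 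This yields the clean identity $\mathfrak{a}_h(\bm{\theta},\hat{\bm{z}}_h)=(\bm{A}\cdot\nabla\bm{\theta},\hat{\bm{z}}_h)+(\bm{Q}\bm{\theta},\hat{\bm{z}}_h)$, so no inverse inequality nor any face estimate against the only semidefinite weight $\bm{D}$ is needed.

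It then remains to estimate these two volume terms in an $\varepsilon$-weighted fashion. The anisotropic regularity in \Cref{assupt:reg_1}, i.e. $\|u_1\|_{r,X}=\mathcal{O}(1)$ and $\|u_i\|_{r,X}=\mathcal{O}(\varepsilon)$ for $i\ge 2$, combined with \Cref{lem:interp}, gives $h^{\min\{r,k+1\}-q}$ rates weighted by $\mathcal{O}(1)$ for the degree-zero moment and $\mathcal{O}(\varepsilon)$ for the higher moments. For the reaction term the diagonal form of $\bm{Q}$ in \eqref{eq:def_Q} balances the factors ($\varepsilon$ from $u_1$ against $\varepsilon^{-1}\cdot\varepsilon^2$ from the higher moments) to give $\|\bm{\theta}\|_{\bm{Q}}\lesssim\sqrt{\varepsilon}\,h^{\min\{r,k+1\}}$, and Cauchy--Schwarz in $(\cdot,\cdot)_{\bm{Q}}$ bounds $(\bm{Q}\bm{\theta},\hat{\bm{z}}_h)$ by $\sqrt{\varepsilon}\,h^{\min\{r,k+1\}}|||\hat{\bm{z}}_h|||_h$. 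For the first-order term I would split $(\bm{A}\cdot\nabla\bm{\theta},\hat{\bm{z}}_h)$ using the purely off-diagonal, block-tridiagonal structure \eqref{eq:A_struc}: the degree-zero output component couples only to the degree-one moments $\partial_i\bm{\theta}=\mathcal{O}(\varepsilon h^{\min\{r,k+1\}-1})$ and is tested against $\hat{z}_1$, for which $\|\hat{z}_1\|\lesssim\varepsilon^{-1/2}|||\hat{\bm{z}}_h|||_h$; whereas the degree-$\ge 1$ output components do see the $\mathcal{O}(1)$ derivative $\partial_i\theta_1$ but are tested against $\hat{z}_m$, $m\ge 2$, for which $\|\hat{z}_m\|\lesssim\varepsilon^{1/2}|||\hat{\bm{z}}_h|||_h$ (both bounds using $\sigma_{\mathrm{a}}\ge\sigma_{\mathrm{a}}^{\mathrm{min}}$ and $\sigma_{\mathrm{t}}>\sigma_{\mathrm{a}}^{\mathrm{min}}$ through \eqref{eq:Q_norm}). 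In each case the opposing powers of $\varepsilon$ cancel, and relegating the subdominant higher-moment couplings to the $\mathcal{O}(\varepsilon)$ correction yields $|(\bm{A}\cdot\nabla\bm{\theta},\hat{\bm{z}}_h)|\lesssim\sqrt{\varepsilon}\,(1+\mathcal{O}(\varepsilon))\,h^{\min\{r,k+1\}-1}|||\hat{\bm{z}}_h|||_h$.

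The main obstacle is precisely this first-order estimate. A naive bound using only the uniform boundedness of $\bm{A}$ (\Cref{cor:uni_bnd_A}) together with $\|\hat{z}_1\|\lesssim\varepsilon^{-1/2}|||\hat{\bm{z}}_h|||_h$ would produce a non-uniform factor $\varepsilon^{-1/2}$ and wreck the estimate as $\varepsilon\to 0$. The cancellation that saves it must use two facts simultaneously: the vanishing diagonal blocks of $\bm{A}$ in \eqref{eq:A_struc}, which forbid the large degree-zero test component from ever meeting the $\mathcal{O}(1)$ degree-zero interpolation error, and the anisotropic decay $\|u_i\|_{r,X}=\mathcal{O}(\varepsilon)$ of the higher moments. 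Assembling $\|\bm{\theta}\|_{\bm{Q}}$ with the two pieces of the consistency term in \eqref{eq:error_inq1}, and finally invoking $\|\bm{u}-\bm{u}_h\|_{\bm{Q}}\le|||\bm{u}-\bm{u}_h|||_h$, delivers \eqref{eq:est_total_general}.
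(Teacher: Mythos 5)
Your proposal is correct and follows essentially the same route as the paper's proof: the Strang-type \Cref{lem:stab_0} applied with the interpolant $\mathcal{I}_h\bm{u}$, the vanishing jumps $[\![\bm{u}-\mathcal{I}_h\bm{u}]\!]=\bm{0}$ for $k\ge 1$ eliminating all face terms, $\bm{Q}$-weighted Cauchy--Schwarz for the reaction term, and the off-diagonal block structure \eqref{eq:A_struc} of $\bm{A}$ combined with the anisotropic regularity of \Cref{assupt:reg_1} to produce the crucial $\sqrt{\varepsilon}\big(1+\mathcal{O}(\varepsilon)\big)h^{\min\{r,k+1\}-1}$ bound on the advection term. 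The only cosmetic differences are that you run the Strang lemma in the $|||\cdot|||_h$ norm and pass to $\|\cdot\|_{\bm{Q}}$ at the end (the paper coerces directly in $\|\cdot\|_{\bm{Q}}$ via $\mathfrak{a}_h(\bm{v}_h,\bm{v}_h)\ge\|\bm{v}_h\|^2_{\bm{Q}}$), and that you invoke the $H^1$-seminorm interpolation estimate where the paper phrases the same $h^{-1}$ gain as an inverse inequality.
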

\begin{proof}
	 Since $\mathfrak{a}_h(\bm{v}_h,\bm{v}_h) = |||\bm{v}_h|||^2_h\ge \|\bm{v}_h\|^2_{\bm{Q}}$ by \Cref{lem:stab}, the $\bm{Q}$ norm can be employed in \eqref{eq:error_inq1}. Then it follows from the Galerkin orthogonality \eqref{eq:go} and \Cref{lem:stab,lem:stab_0} that
	\begin{equation}\label{eq:error_inq2}
		\|\bm{u}-\bm{u}_h\|_{\bm{Q}} \lesssim \|\bm{u} - \mathcal{I}_h\bm{u}\|_{\bm{Q}} + \sup_{\bm{v}_h\in \bm{V}_h}\frac{\mathfrak{a}_h(\bm{u}-\mathcal{I}_h\bm{u},\bm{v}_h)}{\|\bm{v}_h \|_{\bm{Q}}},
	\end{equation}
	where $\mathcal{I}_h$ denotes the interpolation operator onto $\bm{V}_h$ in an element-wise way, i.e., for $\bm{v}\in L^2(X)$, $\mathcal{I}_h\bm{v}|_K = \mathcal{I}_K(\bm{v})$. 
	By the definition of $\|\cdot\|_{\bm{Q}}$, \Cref{lem:interp}, and \Cref{assupt:reg_1}, we have
	\begin{equation}\label{eq:error_inq_part_one}
	\|\bm{u} - \mathcal{I}_h\bm{u}\|_{\bm{Q}} \lesssim h^{\min\{r,k+1\}} \|\sqrt{\bm{Q}}\bm{u}\|_{r,X} \lesssim \sqrt{\varepsilon} h^{\min\{r,k+1\}}. 
	\end{equation}
	On the other hand, by the definition of $\mathfrak{a}_h(\cdot,\cdot)$, we have
	\begin{equation}\label{eq:error_inq_bi_parts}
	\mathfrak{a}_h(\bm{u}-\mathcal{I}_h\bm{u},\bm{v}_h) = \mathbb{I}_1 +\mathbb{I}_2 + \mathbb{I}_3,
	\end{equation}
	where
	\begin{align*}\label{eq:a_decomp_2} 
		\mathbb{I}_1 &:= -\sum_{K\in\mathcal{T}_h}\sum_{i=1}^3 ((\bm{u}-\mathcal{I}_h\bm{u})^-, n_i^K\overleftarrow{\bm{A}^{(i)}\bm{v}_h})_{\partial K}, \\
		\mathbb{I}_2 &:=  \sum_{K\in\mathcal{T}_h} (\bm{A}\cdot \nabla (\bm{u}-\mathcal{I}_h\bm{u}),\bm{v}_h)_K, \quad
		\mathbb{I}_3 :=  \sum_{K\in\mathcal{T}_h} \left(\bm{Q}(\bm{u}-\mathcal{I}_h\bm{u}),\bm{v}_h\right)_K.
	\end{align*}	
	Owing to \Cref{lem:cont}, $\bn\cdot\bm{A}\,[\![\bm{u}]\!]=\mathbf{0}$ almost everywhere. Furthermore, due to the requirement of the interpolation operator $\mathcal{I}_h$, $\mathcal{I}_h\bm{u}$ is also continuous a.e.~when $k\ge 1$, which implies $[\![\mathcal{I}_h\bm{u}]\!]=0$ a.e.~when $k\ge 1$. Therefore, for the first term, 
	\begin{equation}\label{eq:error_inq_bi_1}
		\mathbb{I}_1 = \frac{1}{2}\sum_{K\in\mathcal{T}_h}  \Big( [\![\bm{u}-\mathcal{I}_h\bm{u}]\!], \bn_K\cdot\overleftarrow{\bm{A}\bm{v}_h}\Big)_{\partial K} = \frac{1}{2}\sum_{K\in\mathcal{T}_h}  \Big( [\![\bm{u}]\!]-[\![\mathcal{I}_h\bm{u}]\!], \bn_K\cdot\overleftarrow{\bm{A}\bm{v}_h}\Big)_{\partial K} = 0. 
	\end{equation}
	To bound $\mathbb{I}_2$, recalling that $\bm{A}^{(i)}$ has the fine structure \eqref{eq:A_struc}, we have
	\begin{align*}
	\mathbb{I}_2 
		&= \sum_{K\in\mathcal{T}_h} \sum_{i=1}^3 \left(\bm{A}^{(i)}\partial_i (\bm{u}-\mathcal{I}_h\bm{u}),\bm{v}_h\right)_K \nonumber\\
		&= \sum_{K\in\mathcal{T}_h} \sum_{i=1}^3 \left(\sqrt{\bm{Q}}^{-1}\begin{bmatrix}\bm{A}^{(i)}_{0,1}\partial_i (\bm{u}_1-\mathcal{I}_h\bm{u}_1)\\ \bm{A}^{(i)}_{1,0}\partial_i (\bm{u}_0-\mathcal{I}_h\bm{u}_0) + \bm{A}^{(i)}_{1,2}\partial_i (\bm{u}_2-\mathcal{I}_h\bm{u}_2) \\ \vdots \\ \bm{A}^{(i)}_{N,N-1}\partial_i (\bm{u}_{N-1}-\mathcal{I}_h\bm{u}_{N-1}) \end{bmatrix},\sqrt{\bm{Q}}\begin{bmatrix}\bm{v}_{0,h}\\\bm{v}_{1,h}\\\vdots\\\bm{v}_{N,h}\end{bmatrix}\right)_K. \nonumber
	\end{align*}
	We use an inverse inequality and the definition of $\bm{Q}$ in \eqref{eq:def_Q}, and proceed as follows:
	\begin{align}\label{eq:error_inq_bi_2}
		|\mathbb{I}_2| &\lesssim \sum_{K\in\mathcal{T}_h} \sum_{i=1}^3 h^{-1}\left\|\begin{bmatrix}\varepsilon^{-1/2}\bm{A}^{(i)}_{0,1} (\bm{u}_1-\mathcal{I}_h\bm{u}_1)\\ \sqrt{\varepsilon}\bm{A}^{(i)}_{1,0} (\bm{u}_0-\mathcal{I}_h\bm{u}_0) + \sqrt{\varepsilon}\bm{A}^{(i)}_{1,2} (\bm{u}_2-\mathcal{I}_h\bm{u}_2) \\ \vdots \\ \sqrt{\varepsilon}\bm{A}^{(i)}_{N,N-1} (\bm{u}_{N-1}-\mathcal{I}_h\bm{u}_{N-1}) \end{bmatrix}\right\|_K \left\|\sqrt{\bm{Q}}\bm{v}_h\right\|_K \nonumber\\
		&\lesssim h^{-1}\sum_{K\in\mathcal{T}_h}  h^{\min\{r,k+1\}} \Big(\varepsilon\|\bm{u}_0\|^2_{r,K} + (\varepsilon+\varepsilon^{-1})\|\bm{u}_1\|^2_{r,K} \nonumber\\
		&\phantom{=}\qquad + 2\varepsilon\|\bm{u}_2\|^2_{r,K} + \cdots + \varepsilon\|\bm{u}_N\|^2_{r,K}\Big)^{1/2} \|\bm{v}_h\|_{\bm{Q}} \nonumber\\
		&\lesssim \sqrt{\varepsilon}h^{\min\{r,k+1\}-1} \big(1+\mathcal{O}(\varepsilon)\big)\|\bm{v}_h\|_{\bm{Q}}.
	\end{align}
	To bound $\mathbb{I}_3$, using the Cauchy-Schwarz inequality,
	\begin{align}\label{eq:error_inq_bi_3}
	|\mathbb{I}_3|
	&\lesssim \sum_{K\in\mathcal{T}_h}\left\|\sqrt{\bm{Q}}(\bm{u}-\mathcal{I}_h\bm{u})\right\|_K\left\|\sqrt{\bm{Q}}\bm{v}_h\right\|_K 
	\lesssim \sum_{K\in\mathcal{T}_h}\left\|\sqrt{\bm{Q}}(\bm{u}-\mathcal{I}_h\bm{u})\right\|_K\|\bm{v}_h\|_{\bm{Q}} \nonumber\\
	&\lesssim h^{\min\{r,k+1\}} \left\|\sqrt{\bm{Q}}\bm{u}\right\|_{r,X}\|\bm{v}_h\|_{\bm{Q}} 
	\lesssim \sqrt{\varepsilon}h^{\min\{r,k+1\}} \|\bm{v}_h\|_{\bm{Q}}.
	\end{align}
	Therefore, by \eqref{eq:error_inq_bi_parts}, \eqref{eq:error_inq_bi_1}, \eqref{eq:error_inq_bi_2}, and \eqref{eq:error_inq_bi_3}, we have
	\begin{equation}\label{eq:error_inq_bi}
	\left|\mathfrak{a}_h(\bm{u}-\mathcal{I}_h\bm{u},\bm{v}_h)\right|\lesssim \sqrt{\varepsilon}\Big(h^{\min\{r,k+1\}-1} \big(1+\mathcal{O}(\varepsilon)\big) + h^{\min\{r,k+1\}} \Big) \|\bm{v}_h\|_{\bm{Q}}.
	\end{equation}
	Combining \eqref{eq:error_inq2}, \eqref{eq:error_inq_part_one} and \eqref{eq:error_inq_bi} leads to the stated error estimate.
\end{proof}

Re-scaling the $\|\cdot\|_{\bm{Q}}$ in \eqref{eq:est_total_general} by \Cref{lem:norms_rescale}, we have the following theorem.
\begin{theorem}[Uniform error estimate]\label{thm:uni_conv} 
	Assume that \eqref{as:rte_scale_1}--\eqref{as:rte_scale_2} and \Cref{assupt:reg_1} hold. Consider only $k\ge 1$. 
	The solution $\bm{u}_h$ of the DG method \eqref{eq:bilinear} for the spherical harmonic radiative transfer equation \eqref{eq:rte_scale_Pn_all} converges to $\bm{u}$ uniformly in $\varepsilon$ as $h\rightarrow 0$, and admits the following error estimate:
	\begin{equation*}
	\left\|\bm{u}-\bm{u}_h\right\|
	\lesssim h^{\min\{r,k+1\}-1} (1+\varepsilon+\varepsilon^2) + h^{\min\{r,k+1\}}(1+\varepsilon), \quad k\ge 1.
	\end{equation*}
\end{theorem}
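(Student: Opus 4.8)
The plan is to obtain this estimate as an immediate consequence of \Cref{thm:X}, using only the norm-equivalence \Cref{lem:norms_rescale} to convert the weighted $\bm{Q}$-norm bound into a bound in the unweighted norm $\|\cdot\|$. Writing $s := \min\{r,k+1\}$ for brevity, \Cref{thm:X} supplies
\begin{equation*}
\|\bm{u}-\bm{u}_h\|_{\bm{Q}} \lesssim \sqrt{\varepsilon}\Big(h^{s-1}\big(1+\mathcal{O}(\varepsilon)\big) + h^{s}\Big),
\end{equation*}
and applying \Cref{lem:norms_rescale} to the error $\bm{u}-\bm{u}_h$ in place of $\bm{u}$ gives
\begin{equation*}
\|\bm{u}-\bm{u}_h\| \lesssim \Big(\sqrt{\varepsilon}+\frac{1}{\sqrt{\varepsilon}}\Big)\|\bm{u}-\bm{u}_h\|_{\bm{Q}}.
\end{equation*}
First I would simply chain these two inequalities.

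The key algebraic step is the cancellation of the $\varepsilon$ prefactors: since $\big(\sqrt{\varepsilon}+\frac{1}{\sqrt{\varepsilon}}\big)\sqrt{\varepsilon} = 1+\varepsilon$, the $\sqrt{\varepsilon}$ weight carried by the $\bm{Q}$-norm estimate is exactly absorbed, leaving the $\mathcal{O}(1)$ prefactor $(1+\varepsilon)$ in front of the spatial-order terms. Multiplying out then yields
\begin{equation*}
\|\bm{u}-\bm{u}_h\| \lesssim h^{s-1}(1+\varepsilon)\big(1+\mathcal{O}(\varepsilon)\big) + h^{s}(1+\varepsilon).
\end{equation*}
Since $0<\varepsilon<1$, I would bound $(1+\varepsilon)\big(1+\mathcal{O}(\varepsilon)\big) \le 1+\varepsilon+\varepsilon^2$ (the cross terms being $\mathcal{O}(\varepsilon^2)$ and hence absorbed into the hidden constant), which produces the stated bound $h^{s-1}(1+\varepsilon+\varepsilon^2)+h^{s}(1+\varepsilon)$.

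Finally, uniform convergence follows by inspecting the $\varepsilon$-dependence of the right-hand side. The hidden constants in $\lesssim$ are independent of both $h$ and $\varepsilon$, and the two $\varepsilon$-polynomials $1+\varepsilon+\varepsilon^2$ and $1+\varepsilon$ are bounded above on $(0,1)$. Because $r>1$ and $k\ge 1$ force $s=\min\{r,k+1\}>1$, the leading spatial exponent $s-1$ is strictly positive, so the entire right-hand side tends to $0$ as $h\to 0$ at a rate that is uniform over all $\varepsilon\in(0,1)$. The only point deserving attention is that the potentially dangerous factor $\frac{1}{\sqrt{\varepsilon}}$ in \Cref{lem:norms_rescale}---which originates from the weak $\varepsilon\sigma_{\mathrm{a}}$ weighting of the zeroth-moment component $u_1$ in $\bm{Q}$---does not cause blow-up, precisely because it is balanced against the $\sqrt{\varepsilon}$ factor already present in the estimate of \Cref{thm:X}. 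This cancellation is the mechanism underlying the $\varepsilon$-uniformity and is the real crux of the statement, even though it reduces to elementary algebra once \Cref{thm:X} is in hand.
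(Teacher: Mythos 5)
Your proposal is correct and is exactly the paper's argument: the paper derives \Cref{thm:uni_conv} by rescaling the $\bm{Q}$-norm bound of \Cref{thm:X} via \Cref{lem:norms_rescale}, with the same cancellation $\bigl(\sqrt{\varepsilon}+\tfrac{1}{\sqrt{\varepsilon}}\bigr)\sqrt{\varepsilon}=1+\varepsilon$ that you identify as the crux. Your write-up merely makes explicit the algebra the paper leaves to the reader.
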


\Cref{thm:uni_conv} combined with \Cref{thm:ang_est} and noting that
\begin{equation*}
\|u-\bm{m}^{\tran}\bm{u}_h\|_{L^2(X\times\mathbb{S})} \le \|u - \bm{m}^{\tran}\bm{u}\|_{L^2(X\times\mathbb{S})} + \|\bm{u}-\bm{u}_h\|
\end{equation*}
lead to the following error estimate between the solutions to the scaled RTE \eqref{eq:rte_scale_all} and the SH-DG method \eqref{eq:bilinear}.
\begin{corollary}\label{cor:main}
	Assume the conditions of Theorems~\ref{thm:ang_est} and \ref{thm:X} hold. Then, we have
	\begin{multline*}
	\|u-\bm{m}^{\tran}\bm{u}_h\|_{L^2(X\times\mathbb{S})}
	\lesssim h^{\min\{r,k+1\}-1} (1+\varepsilon+\varepsilon^2) + h^{\min\{r,k+1\}}(1+\varepsilon) \\
	 + N^{-q} \left( \|u\|_{L^2(X; H^q(\mathbb{S}))}+\sum_{i=1}^{3}\|\partial_i u\|_{L^2(X; H^q(\mathbb{S}))} \right).
	\end{multline*}
\end{corollary}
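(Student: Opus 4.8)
The plan is to assemble the stated bound directly from the two estimates already proved, splitting the total error into an angular part and a spatial part via the triangle inequality in $L^2(X\times\mathbb{S})$. Writing
\begin{equation*}
u - \bm{m}^{\tran}\bm{u}_h = \big(u - \bm{m}^{\tran}\bm{u}\big) + \big(\bm{m}^{\tran}\bm{u} - \bm{m}^{\tran}\bm{u}_h\big),
\end{equation*}
the first summand is exactly the angular discretization error $u - u_{P_N}$, since $\bm{m}^{\tran}\bm{u} = u_{P_N}$ by \eqref{eq:u_appx}. I would bound its $L^2(X\times\mathbb{S})$ norm using \Cref{thm:ang_est}, which under the regularity hypothesis \eqref{as:reg_Omega} yields precisely the $N^{-q}$ contribution appearing in the statement.

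For the second summand, the key observation is that $\bm{w}\mapsto \bm{m}^{\tran}\bm{w}$ is an isometry from $[L^2(X)]^L$ into $L^2(X\times\mathbb{S})$. Because $\langle\bm{m}\bm{m}^{\tran}\rangle = \bm{I}$, the same computation that establishes $\|u_{P_N}\|_{L^2(X\times\mathbb{S})} = \|\bm{u}\|$ in the preliminaries gives
\begin{equation*}
\|\bm{m}^{\tran}\bm{u} - \bm{m}^{\tran}\bm{u}_h\|_{L^2(X\times\mathbb{S})} = \|\bm{m}^{\tran}(\bm{u}-\bm{u}_h)\|_{L^2(X\times\mathbb{S})} = \|\bm{u}-\bm{u}_h\|.
\end{equation*}
Thus the spatial part of the error is controlled by the uniform-in-$\varepsilon$ estimate of \Cref{thm:uni_conv}, which supplies the two $h$-power terms $h^{\min\{r,k+1\}-1}(1+\varepsilon+\varepsilon^2)$ and $h^{\min\{r,k+1\}}(1+\varepsilon)$. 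Applying the triangle inequality and adding the two bounds then produces the claimed estimate.

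Since both ingredients are already in hand, there is no genuine obstacle here; the step requiring the most care is the isometry identity above, which is what lets me transfer the vector-valued bound $\|\bm{u}-\bm{u}_h\|$ (proved in the $\|\cdot\|$ norm in \Cref{thm:uni_conv}) into the scalar transport norm $\|\cdot\|_{L^2(X\times\mathbb{S})}$ with no loss of $\varepsilon$-dependence. I would also note that the hypotheses are mutually compatible: \Cref{thm:ang_est} requires \eqref{as:reg_Omega}, while \Cref{thm:X} and \Cref{thm:uni_conv} require \eqref{as:rte_scale_1}--\eqref{as:rte_scale_2} together with \Cref{assupt:reg_1}, so both estimates may be invoked simultaneously for a solution $u$ satisfying all of these, as assumed in the statement.
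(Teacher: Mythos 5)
Your proposal is correct and follows exactly the paper's own argument: the same triangle-inequality splitting into angular error $u-u_{P_N}$ and spatial error, the same isometry $\|\bm{m}^{\tran}(\bm{u}-\bm{u}_h)\|_{L^2(X\times\mathbb{S})}=\|\bm{u}-\bm{u}_h\|$ from $\langle\bm{m}\bm{m}^{\tran}\rangle=\bm{I}$, and the same invocation of \Cref{thm:ang_est} and \Cref{thm:uni_conv} for the two pieces. Your explicit justification of the isometry step and the compatibility of hypotheses is a welcome elaboration of what the paper leaves implicit, but it is not a different route.
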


\subsubsection{Error analysis for constant elements} Consider the case of $k=0$. 
\begin{theorem}\label{thm:X_k=0}
	Assume that \eqref{as:rte_scale_1}--\eqref{as:rte_scale_2} and \Cref{assupt:reg_1} hold. For $k= 0$, the spherical harmonic DG method \eqref{eq:bilinear} admits the following error estimate:
	\begin{equation}\label{eq:est_total_general_k=0}
		|||\bm{u}-\bm{u}_h|||_h \lesssim (1+\varepsilon) h^{\min\{r,1\}-1/2} + \sqrt{\varepsilon} h^{\min\{r,1\}},
	\end{equation}
	and
	\begin{equation}\label{eq:est_total_k=0}
		\left\|\bm{u}-\bm{u}_h\right\|
		\lesssim (\varepsilon^{-1/2} + \varepsilon^{1/2} + \varepsilon^{3/2}) h^{\min\{r,1\}-1/2} + (1 + \varepsilon) h^{\min\{r,1\}}, \quad k = 0.
	\end{equation}
\end{theorem}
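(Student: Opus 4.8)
The plan is to run the same Strang-type argument used in \Cref{thm:X}, but now measured in the full mesh-dependent energy norm $|||\cdot|||_h$ of \eqref{def:norm} rather than in $\|\cdot\|_{\bm{Q}}$. The reason for the switch is exactly what makes $k=0$ different: the piecewise-constant interpolant $\mathcal{I}_h\bm{u}$ is \emph{discontinuous}, so the jump term $\mathbb{I}_1$ that vanished in \eqref{eq:error_inq_bi_1} no longer drops out, and I need the jump seminorm to absorb it. Concretely, coercivity (\Cref{lem:stab}, with constant $\gamma=1$) together with the Galerkin orthogonality of \Cref{rmk:a_ext} lets me invoke \Cref{lem:stab_0} with $\bm{z}_h=\mathcal{I}_h\bm{u}$, giving
\begin{equation*}
|||\bm{u}-\bm{u}_h|||_h\lesssim |||\bm{u}-\mathcal{I}_h\bm{u}|||_h + \sup_{\bm{v}_h\in\bm{V}_h}\frac{\mathfrak{a}_h(\bm{u}-\mathcal{I}_h\bm{u},\bm{v}_h)}{|||\bm{v}_h|||_h}.
\end{equation*}
I would bound the two terms separately to obtain \eqref{eq:est_total_general_k=0}, and finally convert to the $\|\cdot\|$-bound \eqref{eq:est_total_k=0} via \Cref{lem:norms_rescale} together with $\|\cdot\|_{\bm{Q}}\le|||\cdot|||_h$; the product $\big(\sqrt{\varepsilon}+\varepsilon^{-1/2}\big)\big((1+\varepsilon)h^{\min\{r,1\}-1/2}+\sqrt{\varepsilon}\,h^{\min\{r,1\}}\big)$ reproduces exactly the stated coefficients $(\varepsilon^{-1/2}+\varepsilon^{1/2}+\varepsilon^{3/2})$ and $(1+\varepsilon)$.

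For the approximation term I split $|||\bm{u}-\mathcal{I}_h\bm{u}|||_h^2$ according to \eqref{def:norm}. The $\bm{Q}$-part is handled as in \eqref{eq:error_inq_part_one}, using the volume interpolation bound of \Cref{lem:interp} and $\|\sqrt{\bm{Q}}\bm{u}\|_{r,X}\lesssim\sqrt{\varepsilon}$ (which follows from the $\varepsilon$-structure in \Cref{assupt:reg_1}), giving the $\sqrt{\varepsilon}\,h^{\min\{r,1\}}$ contribution. For the jump seminorm I use boundedness of $\bm{D}^{(i)}=|\bm{A}^{(i)}|$ (\Cref{cor:uni_bnd_A}) to reduce to $\sum_K\|[\![\bm{u}-\mathcal{I}_h\bm{u}]\!]\|_{0,\partial K}^2$, then apply the \emph{trace} interpolation estimate of \Cref{lem:interp} and $\|\bm{u}\|_{r,X}=\mathcal{O}(1+\varepsilon)$; this yields the $(1+\varepsilon)h^{\min\{r,1\}-1/2}$ contribution.

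The consistency term is where the analysis genuinely departs from \Cref{thm:X}. I would evaluate $\mathfrak{a}_h(\bm{u}-\mathcal{I}_h\bm{u},\bm{v}_h)$ using the primal form \eqref{eq:a} rather than its integrated-by-parts version: since $\bm{v}_h$ is piecewise constant, $\partial_i\bm{v}_h=0$ kills the volume term $-\sum_i(\bm{A}^{(i)}(\bm{u}-\mathcal{I}_h\bm{u}),\partial_i\bm{v}_h)_K$ identically, leaving only the numerical-flux boundary term $\mathbb{J}_1$ and the zeroth-order term $\sum_K(\bm{Q}(\bm{u}-\mathcal{I}_h\bm{u}),\bm{v}_h)_K$ (bounded by $\sqrt{\varepsilon}\,h^{\min\{r,1\}}|||\bm{v}_h|||_h$ exactly as $\mathbb{I}_3$ in \eqref{eq:error_inq_bi_3}). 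Writing $\bm{w}=\bm{u}-\mathcal{I}_h\bm{u}$, I recombine $\mathbb{J}_1$ face by face using \eqref{eq:flux_Pn} and $\bm{v}_h^-=\{\!\!\{\bm{v}_h\}\!\!\}-\tfrac12[\![\bm{v}_h]\!]$. Because $\{\!\!\{\bm{w}\}\!\!\}$ and $\{\!\!\{\bm{v}_h\}\!\!\}$ are single-valued while $\bn_K$ and the jumps flip sign across a shared face, the average-against-average and jump-against-average pieces cancel, and what survives is a diffusive jump--jump term $\tfrac12\sum_e(|\bn_e|\cdot\bm{D}[\![\bm{w}]\!],[\![\bm{v}_h]\!])_e$ together with an advective term $-\sum_e(\bn_e\cdot\bm{A}\{\!\!\{\bm{w}\}\!\!\},[\![\bm{v}_h]\!])_e$.

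The advective term is the main obstacle, and it is also the reason the argument must live in $|||\cdot|||_h$: I cannot bound $\|[\![\bm{v}_h]\!]\|_{0,e}$ by the energy norm, since $\bm{D}^{(i)}=|\bm{A}^{(i)}|$ may be \emph{singular} (for instance the $P_N$ flux matrix has a zero eigenvalue when $N$ is even), so the $\bm{D}$-weighted jump controls only part of $[\![\bm{v}_h]\!]$. The fact that unlocks the estimate is the matrix domination
\begin{equation*}
-\,|\bn_e|\cdot\bm{D}\ \preceq\ \bn_e\cdot\bm{A}\ \preceq\ |\bn_e|\cdot\bm{D},
\end{equation*}
which follows from $|\bm{A}^{(i)}|\pm\bm{A}^{(i)}=\mathcal{Q}_i\big(|\varLambda^{(i)}|\pm\varLambda^{(i)}\big)\mathcal{Q}_i^{\tran}\succeq 0$ and $0\le|n_i^K|\le1$. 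This domination gives the generalized Cauchy--Schwarz bound $|(\bn_e\cdot\bm{A}\{\!\!\{\bm{w}\}\!\!\},[\![\bm{v}_h]\!])_e|\le(|\bn_e|\cdot\bm{D}\{\!\!\{\bm{w}\}\!\!\},\{\!\!\{\bm{w}\}\!\!\})_e^{1/2}(|\bn_e|\cdot\bm{D}[\![\bm{v}_h]\!],[\![\bm{v}_h]\!])_e^{1/2}$, so the $\bm{v}_h$-factor is absorbed by the jump seminorm inside $|||\bm{v}_h|||_h$ while the $\bm{w}$-factor is controlled by $\|\{\!\!\{\bm{w}\}\!\!\}\|_{0,e}\lesssim h^{\min\{r,1\}-1/2}(1+\varepsilon)$ (again \Cref{lem:interp}, \Cref{cor:uni_bnd_A}, \Cref{assupt:reg_1}); the diffusive jump--jump term is handled identically, directly in the $\bm{D}$-weighted inner product. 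Summing over faces with a final Cauchy--Schwarz gives $|\mathbb{J}_1|\lesssim(1+\varepsilon)h^{\min\{r,1\}-1/2}|||\bm{v}_h|||_h$, and assembling all pieces yields \eqref{eq:est_total_general_k=0}, whence \eqref{eq:est_total_k=0}.
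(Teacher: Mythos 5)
Your proposal is correct and follows essentially the same route as the paper's own proof: the same Strang-type bound (\Cref{lem:stab_0}) in the $|||\cdot|||_h$ norm, the same volume and trace interpolation estimates for $|||\bm{u}-\mathcal{I}_h\bm{u}|||_h$, the same observation that $\partial_i\bm{v}_h=0$ kills the volume advection term, and the same weighted Cauchy--Schwarz treatment of the flux term followed by the identical rescaling via \Cref{lem:norms_rescale} to obtain \eqref{eq:est_total_k=0}. The only cosmetic difference is that you justify the flux bound through the matrix domination $\pm\,\bn_e\cdot\bm{A}\preceq|\bn_e|\cdot\bm{D}$ after splitting the flux into average and jump parts, whereas the paper performs the identical estimate by passing to the diagonalized frame $\mathcal{Q}_i^{\tran}$ with weight $|n_i^K|\,|\varLambda^{(i)}|$ without splitting.
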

\begin{proof}
It follows from the {G}alerkin orthogonality \eqref{eq:go}, the definition of $|||\cdot|||_h$ norm in \eqref{def:norm}, and Lemma \ref{lem:stab_0} that
\begin{equation}\label{eq:error_inq3}
	|||\bm{u}-\bm{u}_h|||_h \lesssim |||\bm{u} - \mathcal{I}_h\bm{u}|||_h + \sup_{\bm{v}_h\in \bm{V}_h}\frac{\mathfrak{a}_h(\bm{u}-\mathcal{I}_h\bm{u},\bm{v}_h)}{|||\bm{v}_h |||_h}.
\end{equation}
Note that for $k=0$, $[\![\mathcal{I}_h\bm{u}]\!]\big|_{\partial K}\neq 0$ since the interpolant cannot be generated by edge points. Therefore, $[\![\bm{u} - \mathcal{I}_h\bm{u}]\!]\big|_{\partial K}\neq 0$. Instead, we have
	\begin{align*}
		|||\bm{u} - \mathcal{I}_h\bm{u}|||^2_h &=\frac{1}{4}\sum_{K\in\mathcal{T}_h} \Big(|\bn_K|\cdot\bm{D}\,[\![\bm{u} - \mathcal{I}_h\bm{u}]\!], [\![\bm{u} - \mathcal{I}_h\bm{u}]\!]\Big)_{\partial K} \\
		&\phantom{=}\quad + \left(\bm{Q}(\bm{u} - \mathcal{I}_h\bm{u}),\bm{u} - \mathcal{I}_h\bm{u}\right) \nonumber\\ 
		&\lesssim h^{2\min\{r,1\}-1}\|\bm{u}\|^2_{r,X} + h^{2\min\{r,1\}} \left\|\sqrt{\bm{Q}}\bm{u}\right\|^2_{r,X}, \\
		&\lesssim (1+\varepsilon)^2 h^{2\min\{r,1\}-1} + \varepsilon h^{2\min\{r,1\}}, 
\end{align*}
from \Cref{lem:interp,assupt:reg_1}, i.e.,
\begin{equation}\label{eq:error_inq3_k=0}
	|||\bm{u} - \mathcal{I}_h\bm{u}|||_h  \lesssim (1+\varepsilon) h^{\min\{r,1\}-1/2} + \sqrt{\varepsilon} h^{\min\{r,k+1\}}.
\end{equation}
For the term $\mathfrak{a}_h(\cdot,\cdot)$, we have
$\mathfrak{a}_h(\bm{u}-\mathcal{I}_h\bm{u},\bm{v}_h) = \mathbb{I}_1 +\mathbb{I}_2 + \mathbb{I}_3$, where
\begin{align*}
	\mathbb{I}_1 &:= \sum_{K\in\mathcal{T}_h} \sum_{i=1}^3 \frac{1}{2}\Big(n_i^K\overrightarrow{\bm{A}^{(i)}(\bm{u}-\mathcal{I}_h\bm{u})}, [\![\bm{v}_h]\!]\Big)_{\partial K} \\
	\mathbb{I}_2 &:= -\sum_{K\in\mathcal{T}_h} \sum_{i=1}^3 (\bm{A}^{(i)}(\bm{u}-\mathcal{I}_h\bm{u}),\partial_i \bm{v}_h)_K, \quad
	\mathbb{I}_3 := \sum_{K\in\mathcal{T}_h} \left(\bm{Q}(\bm{u}-\mathcal{I}_h\bm{u}),\bm{v}_h\right)_K.
\end{align*}
For the first term, since $\mathcal{Q}_i$, $i=1,2,3$ are orthogonal, we have
\begin{align}\label{eq:error_inq_bi_1_k=0}
	|\mathbb{I}_1| &\le  \frac{1}{2}\left(\sum_{K\in\mathcal{T}_h} \sum_{i=1}^3\Big(|n_i^K|\,|\varLambda|^{(i)}\overrightarrow{\mathcal{Q}_i^{\tran}(\bm{u}-\mathcal{I}_h \bm{u})},\overrightarrow{\mathcal{Q}_i^{\tran}(\bm{u}-\mathcal{I}_h\bm{u})}\Big)_{\partial K}\right)^{1/2} \nonumber\\
	&\phantom{\qquad\qquad\qquad\qquad}\cdot\left(\sum_{K\in\mathcal{T}_h} \sum_{i=1}^3\Big(\big|n_i^K\big|\,\bm{D}^{(i)} [\![\bm{v}_h]\!],[\![\bm{v}_h]\!]\Big)_{\partial K}\right)^{1/2} \nonumber\\
	&\lesssim \sum_{i=1}^3 h^{\min\{r,1\}-1/2}\left\|\mathcal{Q}_i^{\tran}\bm{u}\right\|_{r,X} |||\bm{v}_h|||_h = 3h^{\min\{r,1\}-1/2}\|\bm{u}\|_{r,X} |||\bm{v}_h|||_h \nonumber \\
	&\lesssim (1+\varepsilon) h^{\min\{r,1\}-1/2} |||\bm{v}_h|||_h,
\end{align}
where the last inequality is due to the assumption \eqref{eq:reg_assump_u}. Since $k=0$, we have $\partial_i \bm{v}_h=0$ for any $\bm{v}_h\in \bm{V}_h$. Therefore,
\begin{equation}\label{eq:error_inq_bi_2_k=0}
	\mathbb{I}_2=0.
\end{equation}
The third term can be handled similarly to \eqref{eq:error_inq_bi_3}:
\begin{equation}\label{eq:error_inq_bi_3_k=0}
	|\mathbb{I}_3| \lesssim \sqrt{\varepsilon}h^{\min\{r,1\}} |||\bm{v}_h|||_h.
\end{equation}
Combining \eqref{eq:error_inq_bi_1_k=0}, \eqref{eq:error_inq_bi_2_k=0}, and \eqref{eq:error_inq_bi_3_k=0} gives
\begin{equation}\label{eq:error_inq_bi_k=0}
	\left|\mathfrak{a}_h(\bm{u}-\mathcal{I}_h\bm{u},\bm{v}_h)\right|\lesssim \left((1+\varepsilon) h^{\min\{r,1\}-1/2} + \sqrt{\varepsilon}h^{\min\{r,1\}} \right)|||\bm{v}_h|||_h.
\end{equation}

The estimate \eqref{eq:est_total_general_k=0} follows by gathering and inserting the estimates for \eqref{eq:error_inq3_k=0} and \eqref{eq:error_inq_bi_k=0} obtained above into \eqref{eq:error_inq3}. By the definition of $|||\cdot|||_h$, \eqref{eq:est_total_k=0} can be directly deduced from \eqref{eq:est_total_general_k=0}.
\end{proof}

\begin{remark}
	It is well known that the DG method with piecewise constant approximations does not achieve the diffusion limit \cite{LMM1987}. This fact is reflected by the term $\varepsilon^{-1/2}$ in \eqref{eq:est_total_k=0} that tends to infinity as $\varepsilon\to 0$.
\end{remark}

\section{Error analysis for tensor product elements on Cartesian mesh}\label{sec:err_ana_optimal}
The framework for the error analysis developed in \Cref{sec:err_ana} is
applicable to fairly general settings. We obtain a uniform $\mathcal{O}(h^k)$ bound across all $\varepsilon\in(0,1)$ when local polynomials of degree $k\ge 1$ are employed. However, for tensor product elements on Cartesian meshes of dimensions one, two, or three, optimal convergence results can be derived. Here, we consider the one-dimensional slab geometry model (since its error analysis has a unique approach different from the multidimensional cases) and the two-dimensional plane-parallel model \cite{agoshkov1998boundary,Modest} to simplify the analysis, which are briefly introduced below. Note that these reduced models correspond to a three-dimensional problem with certain symmetries. Since it is already known that DG methods do not perform well when $k = 0$, we focus only on the case $k \ge 1$.

\textbf{One-dimensional slab geometry problems.} 
In slab geometry, the RTE \eqref{eq:rte_scale_all} can reduce to the following form
\begin{subequations}\label{eq:1d}
	\begin{align}
		\mu\frac{\partial u}{\partial z}+\frac{\sigma_{\mathrm{t}}}{\varepsilon} u &= \frac{1}{2}\left(\frac{\sigma_{\mathrm{t}}}{\varepsilon}-\varepsilon\sigma_{\mathrm{a}}\right) \int^{1}_{-1} u(z,\mu')\ud \mu' +  f,\quad \forall z\in X=I:=(0,1),\\
		u(0,\mu)&=u(1,\mu) \quad \forall \mu\in [-1,1],
	\end{align}
\end{subequations}
where $\mu\in[-1,1]$ is the $z$-coordinate of $\bomega$, $u=u(z,\mu)$, $f=f(z,\mu)$, and $\ud \mu'$ is the Lebesgue measure on $(-1,1)$.
We set $\bm{A}^{(1)}=\bm{A}^{(2)}=\bm{0}$ and therefore $\bm{A}\cdot\nabla=\bm{A}^{(3)}\partial_z$, and all the notation and formulas in Sections~\ref{sec:SH_PN} and \ref{sec:DG} can be kept. Especially, \eqref{eq:rte_scale_Pn} takes the following form:
\begin{equation*}\label{eq:rte_scale_Pn_1D}
	\bm{A}^{(3)}\partial_3\bm{u} + \varepsilon\sigma_{\mathrm{a}}\bm{u} + \left(\frac{\sigma_{\mathrm{t}}}{\varepsilon}-\varepsilon\sigma_{\mathrm{a}}\right) \bm{R}\bm{u} = \varepsilon \bm{f}.
\end{equation*}

\textbf{2-D plane-parallel model.} We consider the 2-dimensional case $X=(0,1)^2$. The angular variable $\bm{\omega}$ is determined by polar angle $\theta\in[0,\pi]$ and azimuth $\varphi\in[0,2\pi)$ by use of the standard spherical coordinates, i.e.~$\bm{\omega}=\begin{bmatrix} \sin\theta\cos\varphi & \sin\theta\sin\varphi & \cos\theta \end{bmatrix}^{\tran}$. Set $\mu=\cos\theta$, and the RTE \eqref{eq:rte_scale_all} takes the form
\begin{subequations}\label{eq:2d}
	\begin{align}
		\sqrt{1-\mu^2}\cos\varphi\frac{\partial u}{\partial x} + \sqrt{1-\mu^2}\sin\varphi\frac{\partial u}{\partial y} + \frac{\sigma_{\mathrm{t}}}{\varepsilon}u
		&=\left(\frac{\sigma_{\mathrm{t}}}{\varepsilon}-\varepsilon\sigma_{\mathrm{a}}\right)\bar{u}+\varepsilon f,\\
		u(0,y,\mu,\varphi)&=u(1,y,\mu,\varphi), y\in(0,1), \\ 
		u(x,0,\mu,\varphi)&=u(x,1,\mu,\varphi), x\in(0,1),
	\end{align}
\end{subequations}
where $\mu=\cos\theta\in[-1,1]$, $\varphi\in [0,2\pi)$, $u=u(x,y,\mu,\varphi)$, and $f=f(x,y)$.
Then \eqref{eq:rte_scale_Pn} can be rewritten as
\begin{equation*}\label{eq:rte_scale_Pn_2D}
	\bm{A}^{(1)}\partial_1\bm{u} + \bm{A}^{(2)}\partial_2\bm{u} + \varepsilon\sigma_{\mathrm{a}}\bm{u} + \left(\frac{\sigma_{\mathrm{t}}}{\varepsilon}-\varepsilon\sigma_{\mathrm{a}}\right) \bm{R}\bm{u} = \varepsilon \bm{f}.
\end{equation*}

\subsection{Error analysis in spatial discretization with upwind flux for one-dimensional reduced radiative transfer equations}
Let us first introduce a special Radau projection $\mathcal{R}$, defined on an interval $J$, which will be very useful in the optimal error analysis.
The Radau projection is defined as follows: Given a value of $\mu$, the simplex $J$ has a unique outflow point $x^{\mathrm{out}}_J$ based on the sign of $\mu$, and, if $k\ge 1$,
\begin{subequations}\label{eq:Radau_proj}
	\begin{align}
		(u-\mathcal{R} u,v)_J &=0 \quad \forall v\in \mathbb{P}_{k-1}(J), \label{eq:Radau_proj_a}\\
		u(x^{\mathrm{out}}_{J})-\mathcal{R} u(x^{\mathrm{out}}_{J}) &=0. \label{eq:Radau_proj_b}
	\end{align}
\end{subequations}
In fact, if $\mu>0$, we define $x^{\mathrm{out}}_{J}$ to be the right boundary point of $J$; If $\mu<0$, $x^{\mathrm{out}}_{J}$ is then the left boundary point of $J$; If $\mu=0$, $x^{\mathrm{out}}_{J}$ could be either the right or the left boundary point of $J$, or $\mathcal{R}$ can be defined as the usual $L_2$-orthogonal projection in this case, since the advection terms vanish.

The following lemma is known in \cite[Lemma 2.1]{CDG2008_SINUM}.
\begin{lemma}\label{lem:Radau_interp}
	For all $u\in H^{r}(J)$ and $J\in\mathcal{T}_h $, we have
	\begin{equation*}
		\|u-\mathcal{R} u\|_{J}\le C h^{\min \{r,k+1\}}\|u\|_{r,J},
	\end{equation*}
	where $C$ depends only on $k$.
\end{lemma}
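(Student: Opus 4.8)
The plan is to reduce the estimate to a fixed reference interval and then invoke the Bramble--Hilbert lemma. First I would verify that $\mathcal{R}$ is well defined on $\mathbb{P}_k(J)$: condition \eqref{eq:Radau_proj_a} imposes $k$ linear constraints and \eqref{eq:Radau_proj_b} imposes one more, matching $\dim\mathbb{P}_k(J)=k+1$. To see these are independent, suppose $p\in\mathbb{P}_k(J)$ is orthogonal to $\mathbb{P}_{k-1}(J)$ and vanishes at the outflow endpoint. Orthogonality forces $p$ to be a scalar multiple of the degree-$k$ Legendre polynomial on $J$, which does not vanish at either endpoint; hence $p\equiv 0$, so $\mathcal{R}$ is uniquely determined. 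In particular $\mathcal{R}$ fixes $\mathbb{P}_k(J)$, since any $u\in\mathbb{P}_k(J)$ trivially satisfies \eqref{eq:Radau_proj_a}--\eqref{eq:Radau_proj_b}.

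Next I would pass to the reference interval $\hat J=(-1,1)$ via the affine map $F:\hat J\to J$ with $|J|=h_J$, writing $\hat u=u\circ F$. The defining conditions are invariant under affine change of variables: since $F$ preserves polynomial degree, $v\mapsto v\circ F$ maps $\mathbb{P}_{k-1}(J)$ onto $\mathbb{P}_{k-1}(\hat J)$, so \eqref{eq:Radau_proj_a} transforms into the analogous orthogonality on $\hat J$, while $F$ carries the outflow endpoint to the outflow endpoint. By uniqueness this yields the commutation $\widehat{\mathcal{R}u}=\hat{\mathcal{R}}\hat u$, with $\hat{\mathcal{R}}$ the Radau projection on $\hat J$. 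Setting $s=\min\{r,k+1\}$, the standard scaling identities give, up to constants depending only on the reference interval, $\|u-\mathcal{R}u\|_{J}\lesssim h_J^{1/2}\|\hat u-\hat{\mathcal{R}}\hat u\|_{\hat J}$ and $|\hat u|_{s,\hat J}\lesssim h_J^{1/2-s}|u|_{s,J}$, so it suffices to prove the $h$-independent bound $\|\hat u-\hat{\mathcal{R}}\hat u\|_{\hat J}\lesssim |\hat u|_{s,\hat J}$.

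The core of the argument is that $\hat{\mathcal{R}}:H^s(\hat J)\to L^2(\hat J)$ is a bounded linear operator fixing $\mathbb{P}_k(\hat J)$. To see boundedness, expand $\hat{\mathcal{R}}\hat u=\sum_{j=0}^k c_j L_j$ in the Legendre basis. Condition \eqref{eq:Radau_proj_a} gives $c_m\|L_m\|^2=(\hat u,L_m)$ for $m\le k-1$, so $c_0,\dots,c_{k-1}$ are controlled by $\|\hat u\|_{L^2(\hat J)}$; the top coefficient is then fixed by \eqref{eq:Radau_proj_b} through the solvable relation $c_k L_k(\hat x^{\mathrm{out}})=\hat u(\hat x^{\mathrm{out}})-\sum_{j<k}c_j L_j(\hat x^{\mathrm{out}})$, using $L_k(\pm1)=(\pm1)^k\neq 0$. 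Hence $\|\hat{\mathcal{R}}\hat u\|_{\hat J}\lesssim \|\hat u\|_{L^2(\hat J)}+|\hat u(\hat x^{\mathrm{out}})|\lesssim \|\hat u\|_{s,\hat J}$. With $\hat{\mathcal{R}}$ bounded and polynomial-preserving, the Bramble--Hilbert lemma gives, for every $p\in\mathbb{P}_k(\hat J)$, $\|\hat u-\hat{\mathcal{R}}\hat u\|_{\hat J}=\|(I-\hat{\mathcal{R}})(\hat u-p)\|_{\hat J}\lesssim\|\hat u-p\|_{s,\hat J}$, and minimizing over $p$ bounds this by $|\hat u|_{s,\hat J}$. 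Undoing the scaling yields $\|u-\mathcal{R}u\|_{J}\lesssim h_J^{s}|u|_{s,J}\le h_J^{\min\{r,k+1\}}\|u\|_{r,J}$.

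I expect the main obstacle to be the boundedness of $\hat{\mathcal{R}}$ in the low-regularity regime, since the point-evaluation in \eqref{eq:Radau_proj_b} is continuous on $H^s(\hat J)$ only through the embedding $H^s(\hat J)\hookrightarrow C(\overline{\hat J})$, valid for $s>1/2$; when $0<r\le 1/2$ the projection requires a more careful interpretation, though for the applications in this paper one has $r>1$, so this case does not arise. Verifying the affine-invariance of \eqref{eq:Radau_proj_a}--\eqref{eq:Radau_proj_b} and the operator bound are the only substantive ingredients; the remaining scaling steps are routine.
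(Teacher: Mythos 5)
Your argument is correct in substance, but note that it differs from the paper in a somewhat trivial way: the paper offers no proof of this lemma at all, it simply cites \cite[Lemma 2.1]{CDG2008_SINUM}. What you have written is essentially the standard argument underlying that citation: solvability of the Radau conditions via the Legendre expansion (the orthogonal complement of $\mathbb{P}_{k-1}(J)$ in $\mathbb{P}_k(J)$ is spanned by the degree-$k$ Legendre polynomial, which does not vanish at the endpoints), affine invariance and reduction to the reference interval, boundedness of the reference projection by reading off the Legendre coefficients and using the point condition for the top coefficient, polynomial invariance, and Bramble--Hilbert. Your write-up buys two things the paper leaves implicit: (i) it makes explicit that the projection requires point values, i.e.\ continuity of $H^s(\hat J)\hookrightarrow C(\overline{\hat J})$, which needs $s>1/2$ (harmless here, since \Cref{assupt:reg_1} takes $r>1$); and (ii) it shows that the constant really depends on $r$ as well as $k$ (through the embedding and Bramble--Hilbert constants), whereas the lemma as stated claims dependence only on $k$.

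One correction you should make: the intermediate scaling identity has its exponent reversed. With $\hat u = u\circ F$ and $|J|=h_J$, the one-dimensional scaling gives $|\hat u|_{s,\hat J}\lesssim h_J^{s-1/2}\,|u|_{s,J}$, not $h_J^{1/2-s}$. Your final assembled bound $\|u-\mathcal{R}u\|_J\lesssim h_J^{1/2}\cdot h_J^{s-1/2}|u|_{s,J}=h_J^{s}|u|_{s,J}$ is consistent with the corrected exponent, so this is a typo rather than a gap; as literally written, however, the two displayed scaling inequalities would combine to give $h_J^{1-s}$, which is not the claimed rate, so fix the sign.
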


We have the following optimal spatial error estimate for the DG approximation in the one-dimensional slab geometry setting.
\begin{theorem}\label{thm:X_1d}
	If \eqref{as:rte_scale_1}--\eqref{as:rte_scale_2} hold and $\bm{u}$ satisfies \Cref{assupt:reg_1}, the spherical harmonic DG method \eqref{eq:bilinear} with the upwind flux for the one-dimensional slab geometry problem \eqref{eq:1d} admits the following error estimate for $k\geq 1$:
	\begin{equation*}
	\|\bm{u}-\bm{u}_h\|_I \lesssim (1+\varepsilon)h^{\min\{r,k+1\}}.
	\end{equation*}
\end{theorem}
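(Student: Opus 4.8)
The plan is to run an energy argument based on the uniform coercivity of $\mathfrak{a}_h$ in the $|||\cdot|||_h$ norm (\Cref{lem:stab}) together with the Galerkin orthogonality \eqref{eq:go}, but with the generic interpolant of \Cref{thm:X} replaced by a projection tailored to the upwind flux so that the advective consistency error disappears. Since in slab geometry $\bm{A}\cdot\nabla = \bm{A}^{(3)}\partial_z$ involves the single symmetric matrix $\bm{A}^{(3)} = \mathcal{Q}_3\varLambda^{(3)}\mathcal{Q}_3^{\tran}$, I would first diagonalize and work in the characteristic variables $\tilde{\bm{u}} = \mathcal{Q}_3^{\tran}\bm{u}$. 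Define a vector Radau projection $\bm{\Pi}$ by applying the scalar operator $\mathcal{R}$ of \eqref{eq:Radau_proj} component-wise to $\tilde{\bm{u}}$, choosing for the $j$th characteristic the outflow endpoint according to $\mathrm{sign}(\lambda_j^{(3)})$. Because $\mathcal{Q}_3$ is a constant orthogonal matrix, \Cref{lem:Radau_interp} transfers to give $\|\bm{u}-\bm{\Pi}\bm{u}\|_I \lesssim h^{\min\{r,k+1\}}\|\bm{u}\|_{r,I}$, and the $\mathbb{P}_{k-1}$-orthogonality \eqref{eq:Radau_proj_a} survives the rotation, so that $\bm{\eta} := \bm{u}-\bm{\Pi}\bm{u} \perp [\mathbb{P}_{k-1}]^L$ element-wise.

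Writing $\bm{u}-\bm{u}_h = \bm{\eta} + \bm{\zeta}$ with $\bm{\zeta} := \bm{\Pi}\bm{u}-\bm{u}_h \in \bm{V}_h$, coercivity and orthogonality yield $|||\bm{\zeta}|||_h^2 = -\mathfrak{a}_h(\bm{\eta},\bm{\zeta})$. I would split the right-hand side into an advective part (volume plus interface terms) and the reaction part $-(\bm{Q}\bm{\eta},\bm{\zeta})$. The whole point of the characteristic Radau construction is that the advective part vanishes exactly: the volume integrals $(\bm{A}^{(3)}\bm{\eta},\partial_z\bm{\zeta})_K$ vanish because $\bm{A}^{(3)}\partial_z\bm{\zeta} \in [\mathbb{P}_{k-1}]^L$ is orthogonal to $\bm{\eta}$ (using that $\bm{A}^{(3)}$ is constant and symmetric), while in characteristic variables the upwind interface flux decouples into $L$ scalar upwind fluxes, each annihilating $\tilde\eta_j$ through the outflow condition \eqref{eq:Radau_proj_b}. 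This exact cancellation is what upgrades the generic $h^{k+1/2}$ estimate to the optimal order $h^{k+1}$.

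The remaining reaction term $(\bm{Q}\bm{\eta},\bm{\zeta}) = \varepsilon(\sigma_{\mathrm{a}}\eta_1,\zeta_1) + \varepsilon^{-1}\sum_{i=2}^{L}(\sigma_{\mathrm{t}}\eta_i,\zeta_i)$, with $\bm{Q}$ from \eqref{eq:def_Q}, is where the real difficulty sits, and I expect it to be the main obstacle. The first summand is benign. The $\varepsilon^{-1}$-weighted tail, paired against the tail part of $|||\bm{\zeta}|||_h$, readily controls $\|\bm{\zeta}_{\ge 2}\|_I \lesssim h^{k+1}$; but the naive bound only gives the non-uniform estimate $\|\zeta_1\|_I \lesssim \varepsilon^{-1}h^{k+1}$ for the dominant scalar-flux component, because the characteristic Radau error $\eta_i$ ($i\ge 2$) inherits the $\mathcal{O}(1)$ size of the scalar flux $u_1$ rather than the $\mathcal{O}(\varepsilon)$ size of the higher moments guaranteed by \eqref{eq:reg_assump}. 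Overcoming this requires features special to slab geometry: the tridiagonal (even/odd) structure of $\bm{A}^{(3)}$ and the fact that, for $k\ge 1$, \eqref{eq:Radau_proj_a} forces the Radau error to have zero element-mean. My plan is to test the discrete identity $\mathfrak{a}_h(\bm{\zeta},\bm{v}_h) = -\mathfrak{a}_h(\bm{\eta},\bm{v}_h)$ against odd-moment test functions to tie $\partial_z\zeta_1$ to the already-controlled higher moments modulo mean-zero projection remainders, and then to invoke a discrete Poincaré inequality (legitimate because of the periodic boundary condition) to recover a uniform $\mathcal{O}(h^{k+1})$ bound for $\zeta_1$ itself.

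Assembling the pieces through $\|\bm{u}-\bm{u}_h\|_I \le \|\bm{\eta}\|_I + \|\bm{\zeta}\|_I$ — using $\|\bm{\eta}\|_I \lesssim h^{\min\{r,k+1\}}$ from \Cref{lem:Radau_interp}, the energy bound for $\|\bm{\zeta}_{\ge 2}\|_I$, and the uniform bound for $\zeta_1$ — gives the claimed $(1+\varepsilon)h^{\min\{r,k+1\}}$ estimate, the $(1+\varepsilon)$ prefactor being the residue of the harmless $\varepsilon(\sigma_{\mathrm{a}}\eta_1,\zeta_1)$ term together with the regularity scaling \eqref{eq:reg_assump}. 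The one-dimensional structure is essential in two places — the single advection matrix permitting a clean global diagonalization, and the moment-balance/Poincaré argument for the scalar flux — which explains why the multidimensional Cartesian case must be treated separately with tensor-product arguments.
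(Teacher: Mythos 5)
Your proposal tracks the paper's proof exactly through its first half: the paper also uses the characteristic Radau projection (implicitly defined so that $\mathcal{Q}_3^{\tran}\mathcal{R}\bm{u}=\mathcal{R}(\mathcal{Q}_3^{\tran}\bm{u})$, with outflow points chosen by the signs of the entries of $\varLambda^{(3)}$), also runs the coercivity/Galerkin-orthogonality argument (via \Cref{lem:stab}, \eqref{eq:go} and \Cref{lem:stab_0}), and also kills both advective terms exactly ($\mathbb{I}_1=0$ from \eqref{eq:Radau_proj_b} and upwinding, $\mathbb{I}_2=0$ from \eqref{eq:Radau_proj_a}). The divergence is at the reaction term, and this is where your proposal has a genuine gap: the paper finishes in two lines, asserting $\|\bm{u}-\mathcal{R}\bm{u}\|_{\bm{Q}}\lesssim\sqrt{\varepsilon}\,h^{\min\{r,k+1\}}$ and $|\mathbb{I}_3|\lesssim\sqrt{\varepsilon}\,h^{\min\{r,k+1\}}\|\bm{v}_h\|_{\bm{Q}}$ directly from \Cref{lem:Radau_interp} and \Cref{assupt:reg_1} and then rescaling with \Cref{lem:norms_rescale}; there is no odd-moment testing and no discrete Poincar\'e argument anywhere in the paper. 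Your substitute for that step is only a program, never executed, and as sketched it is doubtful it closes: testing the error identity against $\ell=1$ moments reintroduces $\varepsilon^{-1}$-weighted terms of the form $\tfrac{\sigma_{\mathrm{t}}}{\varepsilon}(\zeta_j+\eta_j,v)$, a periodic Poincar\'e inequality cannot recover the mean of $\zeta_1$ (which carries only the weight $\varepsilon\sigma_{\mathrm{a}}$ in $\bm{Q}$), and the jump control available from $|||\cdot|||_h$ is on $|\bn_K|\cdot\bm{D}\,[\![\bm{\zeta}]\!]$, not on $[\![\zeta_1]\!]$ itself. So, judged as a proof of the theorem, the proposal is incomplete precisely where a proof is required.

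That said, your diagnosis of the obstruction is mathematically sound, and it is worth recording that it points at the one step the paper settles by citation rather than argument. The paper's bounds \eqref{eq:error_inq3_1d} and \eqref{eq:error_inq_bi_1d} follow from \Cref{lem:Radau_interp} and \Cref{assupt:reg_1} only if the projection error inherits the moment-wise scalings \eqref{eq:reg_assump}, i.e.\ $\|(\bm{u}-\mathcal{R}\bm{u})_i\|\lesssim h^{\min\{r,k+1\}}\|u_i\|_{r,X}$ component by component in the original moment variables. But the projection that makes $\mathbb{I}_1$ vanish must be built in characteristic variables, and the rotation $\mathcal{Q}_3$ mixes the $\mathcal{O}(1)$ component $u_1$ into every characteristic field; since $\bm{A}^{(3)}$ has eigenvalues of both signs, different characteristic components use different outflow endpoints, and one finds for $i\ge 2$ that
\begin{equation*}
(\bm{u}-\mathcal{R}\bm{u})_i \;=\; c_i\left(\mathcal{R}^{\mathrm{left}}u_1-\mathcal{R}^{\mathrm{right}}u_1\right)+\mathcal{O}\!\left(\varepsilon\, h^{\min\{r,k+1\}}\right),
\end{equation*}
with constants $c_i$ that are generically nonzero (e.g.\ $c_2=\tfrac12$ for $P_1$), while the bracket is $\mathcal{O}(h^{k+1})$ with an $\varepsilon$-independent constant. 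Consequently the $\varepsilon^{-1}$-weighted components of $\|\bm{u}-\mathcal{R}\bm{u}\|_{\bm{Q}}$ contribute $\varepsilon^{-1/2}h^{k+1}$ rather than $\varepsilon^{1/2}h^{k+1}$, which is exactly the non-uniformity you describe. In short: you correctly located the crux, but locating an obstruction and gesturing at a remedy is not overcoming it, so the proposal cannot be accepted as a proof; and to the extent your obstruction is taken seriously, it is a criticism that applies to the paper's own two-line treatment of this step as well.
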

\begin{proof}
	To derive the error estimate, we first observe that since $\mathcal{R}\bm{u}\in \bm{V}_h$, the inequality \eqref{eq:error_inq1} holds in terms of the norm $\|\cdot\|_{\bm{Q}}$:
	\begin{equation}\label{eq:error_inq2_1d}
	\|\bm{u}-\bm{u}_h\|_{\bm{Q}} \lesssim \|\bm{u} - \mathcal{R}\bm{u}\|_{\bm{Q}} + \sup_{\bm{v}_h\in \bm{V}_h}\frac{\mathfrak{a}_h(\bm{u}-\mathcal{R}\bm{u},\bm{v}_h)}{\|\bm{v}_h \|_{\bm{Q}}}.
	\end{equation}
	We estimate each term in the right-hand side of the above inequality.
	
	For the first term, owning to the \Cref{lem:Radau_interp} and \Cref{assupt:reg_1}, we have
	\begin{equation}\label{eq:error_inq3_1d}
	\|\bm{u}-\mathcal{R}\bm{u}\|_{\bm{Q}} \lesssim \sqrt{\varepsilon} h^{\min\{r,k+1\}}.
	\end{equation}
	For the second term, similar to the proof of \Cref{thm:X}, we have
	\begin{equation*}\label{eq:error_inq_bi_parts_1d}
	\mathfrak{a}_h(\bm{u}-\mathcal{R}\bm{u},\bm{v}_h) = \mathbb{I}_1 +\mathbb{I}_2 + \mathbb{I}_3,
	\end{equation*}
	where
	\begin{alignat*}{2}
	\mathbb{I}_1 &:=  \sum_{J\in\mathcal{T}_h}  \Big(n_3^J\overrightarrow{\bm{A}^{(3)}(\bm{u}-\mathcal{R}\bm{u})}, \bm{v}_h^{-}\Big)_{\partial J}, & &(n_3=\pm 1),\\
	\mathbb{I}_2 &:=  -\sum_{J\in\mathcal{T}_h}  (\bm{A}^{(3)}(\bm{u}-\mathcal{R}\bm{u}),\partial_z \bm{v}_h)_J, 
	&\quad\mathbb{I}_3 &:=  \sum_{J\in\mathcal{T}_h} \left(\bm{Q}(\bm{u}-\mathcal{R}\bm{u}),\bm{v}_h\right)_J.
	\end{alignat*}
	Since the upwind flux is assumed, for the term $\mathbb{I}_1$, due to \eqref{eq:A_sym}, we have
	\begin{align*} 
		\Big(n_3^J\overrightarrow{\bm{A}^{(3)}(\bm{u}-\mathcal{R}\bm{u})}, \bm{v}_h^{-}\Big)_{\partial J} &= \Big(n_3^J\mathcal{Q}_3 \varLambda^{(3)} \overrightarrow{\mathcal{Q}_3^{\tran}(\bm{u}-\mathcal{R}\bm{u})}, \bm{v}_h^{-}\Big)_{\partial J}\\
		&= \Big(n_3^J \varLambda^{(3)} \overrightarrow{\mathcal{Q}_3^{\tran}(\bm{u}-\mathcal{R}\bm{u})}, \mathcal{Q}_3^{\tran}\bm{v}_h^{-}\Big)_{\partial J}\\
		[\text{Set }\bm{w}=\mathcal{Q}_3^{\tran}\bm{u}]
		&= \Big(n_3^J \varLambda^{(3)} \overrightarrow{\bm{w} - \mathcal{R}\bm{w}}, \,\mathcal{Q}_3^{\tran}\bm{v}_h^-\Big)_{\partial J}.	
	\end{align*}
	Since $\varLambda^{(3)}$ is a diagonal matrix, by use of the property \eqref{eq:Radau_proj_b}, we infer  
	$\mathbb{I}_1=0$. Noting $\partial_z \bm{v}_h\in \mathbb{P}_{k-1}(J)$ and the property \eqref{eq:Radau_proj_a}, we get $\mathbb{I}_2=\sum_{J\in\mathcal{T}_h}\left(\bm{u}-\mathcal{R}\bm{u},\bm{A}^{(3)}\partial_z \bm{v}_h\right)_J=0$. By Lemma~\ref{lem:Radau_interp}, we have $|\mathbb{I}_3|\lesssim \sqrt{\varepsilon} h^{\min\{r,k+1\}} \|\bm{v}_h\|_{\bm{Q}}$. Combining all the bounds that have been derived gives
	\begin{equation}\label{eq:error_inq_bi_1d}
	\left|\mathfrak{a}_h(\bm{u}-\mathcal{R}\bm{u},\bm{v}_h)\right|\lesssim \sqrt{\varepsilon} h^{\min\{r,k+1\}} \|\bm{v}_h\|_{\bm{Q}}.
	\end{equation}
	By inserting \eqref{eq:error_inq3_1d} and \eqref{eq:error_inq_bi_1d} into \eqref{eq:error_inq2_1d} and noting \Cref{lem:norms_rescale}, we infer
	\begin{equation*}
		\|\bm{u}-\bm{u}_h\|_I \lesssim \left(\sqrt{\varepsilon}+\frac{1}{\sqrt{\varepsilon}}\right)\|\bm{u}-\bm{u}_h\|_{\bm{Q}} \lesssim (1+\varepsilon) h^{\min\{r,k+1\}},
	\end{equation*}
	which completes the proof.
\end{proof}

\subsection{Error analysis in spatial discretization with upwind flux for tensor product polynomials on rectangular elements of Cartesian mesh} 
The optimal estimate also holds in the two- and three-dimensional cases when the tensor product polynomials on rectangular elements of Cartesian mesh are employed. We focus here only on the two-dimensional case ($d=2$). The extension of our analysis to the case $d=3$ is straightforward.

On a rectangle $K=J_1\times J_2$, for $w\in C^0(\overline{K})$, we define
\begin{equation*}
	\Pi w:=\mathcal{R}_1\otimes\mathcal{R}_2 w
\end{equation*}
with the subscripts of $\mathcal{R}$ indicating the application of the one-dimensional operators $\mathcal{R}$ in \eqref{eq:Radau_proj} with respect to the corresponding variable. The following approximation result is known (see, e.g., \cite[Lemma 3.2]{CKPS2001}).
\begin{lemma}\label{lem:Radau_interp_2d}
	For all $w\in H^{r}(J)$ and $J\in\mathcal{T}_h $, we have, for $k\ge 1$, 
	\begin{equation}
		\|w-\Pi w\|_K\le C h^{\min \{r,k+1\}}\|w\|_{r,K}, 
	\end{equation}
	where $C$ depends only on $k$.
\end{lemma}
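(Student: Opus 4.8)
The plan is to reduce the two–dimensional estimate to the one–dimensional bound in \Cref{lem:Radau_interp} by exploiting the tensor–product structure of $\Pi=\mathcal{R}_1\otimes\mathcal{R}_2$. Since $\mathcal{R}_1$ and $\mathcal{R}_2$ act on different variables they commute, so I would write the operator error as
\begin{equation*}
  I-\Pi = I-\mathcal{R}_1\mathcal{R}_2 = (I-\mathcal{R}_1) + \mathcal{R}_1(I-\mathcal{R}_2),
\end{equation*}
and apply the triangle inequality to obtain $\|w-\Pi w\|_K \le \|(I-\mathcal{R}_1)w\|_K + \|\mathcal{R}_1(I-\mathcal{R}_2)w\|_K$. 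Each summand will be treated by slicing $K=J_1\times J_2$ with Fubini's theorem and invoking the one–dimensional result on each slice.

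For the first term, for a.e.\ fixed $x_2\in J_2$ the slice $w(\cdot,x_2)$ is a function on $J_1$ to which \Cref{lem:Radau_interp} applies, giving
\begin{equation*}
  \|(I-\mathcal{R}_1)w\|_K^2 = \int_{J_2}\|(I-\mathcal{R}_1)w(\cdot,x_2)\|_{J_1}^2\ud x_2
  \lesssim h^{2\min\{r,k+1\}}\int_{J_2}\|w(\cdot,x_2)\|_{r,J_1}^2\ud x_2.
\end{equation*}
The remaining slice integral is controlled by the full norm, $\int_{J_2}\|w(\cdot,x_2)\|_{r,J_1}^2\ud x_2 \le \|w\|_{r,K}^2$, because the one–dimensional Sobolev norm in $x_1$ collects only a subset of the derivatives present in $\|w\|_{r,K}$. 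This already yields the desired power of $h$ with a constant depending only on $k$.

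For the second term I need the $L^2$–stability of the one–dimensional Radau projection, uniformly in $h$, namely $\|\mathcal{R}_1 g\|_{J_1}\le C(k)\|g\|_{J_1}$. This does not follow from \Cref{lem:Radau_interp} directly, since $\mathcal{R}$ is defined through the outflow endpoint condition \eqref{eq:Radau_proj_b} rather than by $L^2$–orthogonality; I would establish it by mapping to a reference interval, where $\mathcal{R}$ is a fixed bounded projection onto a finite–dimensional space, and observing that its $L^2$ operator norm is invariant under the affine scaling to $J_1$. With this stability, a Fubini argument in the $x_1$ direction gives $\|\mathcal{R}_1(I-\mathcal{R}_2)w\|_K \le C(k)\|(I-\mathcal{R}_2)w\|_K$, and then slicing in the $x_2$ direction together with \Cref{lem:Radau_interp} yields $\|(I-\mathcal{R}_2)w\|_K \lesssim h^{\min\{r,k+1\}}\|w\|_{r,K}$. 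Adding the two contributions gives the claim.

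I expect the second term to be the main obstacle, for two reasons. First, the uniform $L^2$–stability of $\mathcal{R}_1$ is the genuinely non–routine input, precisely because the Radau projection is not orthogonal. Second, one must ensure that the composed operator $\mathcal{R}_1(I-\mathcal{R}_2)w$ is well defined, since the endpoint evaluation in $\mathcal{R}_1$ requires the slices $(I-\mathcal{R}_2)w(\cdot,x_2)$ to have a trace in $x_1$; in the regime relevant here ($r>1$ by \Cref{assupt:reg_1}) the traces exist and the operator is meaningful, and for lower regularity one argues by density from smooth $w$ using the stability bounds just obtained. Everything else is routine slicing and the application of \Cref{lem:Radau_interp}, so the constant depends only on $k$.
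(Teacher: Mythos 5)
The paper itself does not prove this lemma---it simply cites \cite[Lemma 3.2]{CKPS2001}---so the only question is whether your argument is sound. Your decomposition $I-\Pi=(I-\mathcal{R}_1)+\mathcal{R}_1(I-\mathcal{R}_2)$ and the Fubini/slicing treatment of the first term are fine. The genuine gap is the claimed uniform $L^2$-stability $\|\mathcal{R}_1 g\|_{J_1}\le C(k)\|g\|_{J_1}$: this is false. The Radau projection is determined in part by the point value $g(x^{\mathrm{out}}_J)$, and point evaluation is not an $L^2$-bounded functional. Concretely, on the reference interval take $g_\epsilon$ continuous, vanishing on $[0,1-\epsilon]$ and ramping up to $g_\epsilon(1)=1$; then $\|g_\epsilon\|_{L^2}\to 0$ while $\mathcal{R}g_\epsilon$ converges to the fixed, nonzero polynomial $p\in\mathbb{P}_k$ determined by $(p,v)=0$ for all $v\in\mathbb{P}_{k-1}$ and $p(1)=1$. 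So the $L^2$ operator norm of $\mathcal{R}$ is already infinite on the reference element, and the affine-scaling step cannot rescue a bound that does not exist there. The flaw in ``a projection onto a finite-dimensional space is bounded'' is that $\mathcal{R}$ is not defined on all of $L^2$, only on functions possessing point values, and on that dense subspace it is unbounded in the $L^2$ norm.

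The standard repair keeps your structure but uses the correct stability bound: $\|\mathcal{R}_1 g\|_{J_1}\le\|g\|_{J_1}+\|(I-\mathcal{R}_1)g\|_{J_1}\lesssim \|g\|_{J_1}+h\|g\|_{1,J_1}$, which follows from \Cref{lem:Radau_interp} with $r=1$ (an $h$-scaled $H^1\to L^2$ bound is the best available). Applying this slice-wise to $(I-\mathcal{R}_2)w(\cdot,x_2)$ gives
\begin{equation*}
\|\mathcal{R}_1(I-\mathcal{R}_2)w\|_K\lesssim \|(I-\mathcal{R}_2)w\|_K+h\,\|\partial_1(I-\mathcal{R}_2)w\|_K .
\end{equation*}
Since $\mathcal{R}_2$ acts on the $x_2$ variable through linear functionals of $w(x_1,\cdot)$, it commutes with $\partial_1$, so $\|\partial_1(I-\mathcal{R}_2)w\|_K=\|(I-\mathcal{R}_2)\partial_1 w\|_K\lesssim h^{\min\{r-1,k+1\}}\|w\|_{r,K}$; the prefactor $h$ then yields $h^{\min\{r,k+2\}}\le h^{\min\{r,k+1\}}$, while the first term is bounded by slicing in $x_2$ exactly as you did. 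With this replacement (and your density remark to justify the well-definedness of the compositions), the argument closes and recovers the estimate the paper imports from the literature.
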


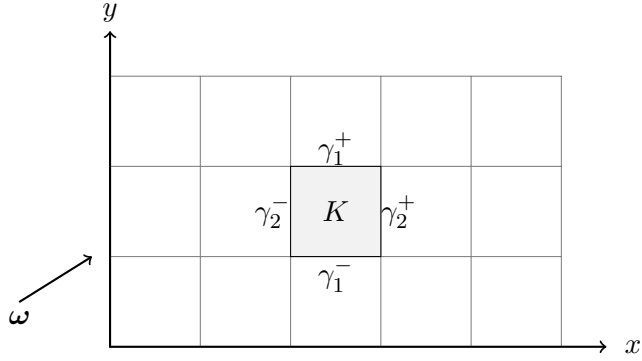
\begin{figure}[!hbp]
	\centering
	\begin{tikzpicture}[scale=1.2,cap=round]
		\draw[step=1cm,gray,thin] (0,0) grid (5,3);
		\filldraw[fill=gray!10, draw=black] (2,1) rectangle (3,2);
		\draw[thick,->] (0,0) -- (5.5,0);
		\node[right] at (5.6,0.0) {\large$x$};
		\draw[thick,->] (0,0) -- (0,3.5);
		\node[above] at (0,3.5) {\large$y$};
		\node[black,centered] at (2.5,1.5) {\large${K}$};
		\node[black,centered] at (2.5,0.8) {\large$\gamma_1^-$};
		\node[black,centered] at (2.5,2.2) {\large$\gamma_1^+$};
		\node[black,centered] at (1.8,1.5) {\large$\gamma_2^-$};
		\node[black,centered] at (3.2,1.5) {\large$\gamma_2^+$};
		\draw[thick,->] (-1,0.5) -- (-0.2,1.0);
		\node[below] at (-1,0.5) {\large$\bomega$};
	\end{tikzpicture}
	\caption{The Cartesian mesh $\mathcal{T}_h$ and the vector $\bomega$ used to define the numerical fluxes.}\label{fig:1}
\end{figure}
Consider a cuboid $K\in \mathcal{T}_h$ with four edges $\gamma_1^-$, $\gamma_1^+$, $\gamma_2^-$, and $\gamma_2^+$ as in \Cref{fig:1}. Given a direction vector $\bomega$, the boundary of $K$ is partitioned into two parts: $\partial K=\gamma^-\cup\gamma^+$, where $\gamma^-=\gamma_1^-\cup\gamma_2^-$ is the incoming boundary and $\gamma^+=\gamma_1^+\cup\gamma_2^+$ is the outgoing one. Note that the sets $\gamma^-$ and $\gamma^+$ depend on the choice of $\bomega$. Define
\begin{subequations}\label{eq:Z_K}
	\begin{align}
		Z_{K,1}(u,v_h)&= (u-\mathcal{R}_2 u, v_h^-)_{\gamma_2^+} - (u-\mathcal{R}_2 u, v_h^-)_{\gamma_2^-} - (u-\Pi u,\partial_1 v_h)_K, \\
		Z_{K,2}(u,v_h)&=(u-\mathcal{R}_1 u, v_h^-)_{\gamma_1^+} - (u-\mathcal{R}_1 u, v_h^-)_{\gamma_1^-} - (u-\Pi u,\partial_2 v_h)_K.
	\end{align}
\end{subequations}
Note that $v_h^-$ indicates the inside value of $v_h$ with respect to $K$. 
When vector arguments are employed in $Z_{K,i}$, $i=1,2$, the right-hand sides of \eqref{eq:Z_K} are understood in terms of \eqref{eq:vector_inner}. 
The following superconvergence result \cite{CKPS2001}, essentially due to LeSaint and Raviart \cite{LR1974}, plays an essential role in obtaining the optimal estimate. 
\begin{lemma}\label{lem:Z_K}
	Let $Z_{K,1}$ and $Z_{K,2}$ be defined by \eqref{eq:Z_K}. Assume $u\in H^r(K)$ and $v_h\in \mathbb{Q}_k(K)$. Then we have for $r>0$,
	\begin{equation*} 
		|Z_{K,i}(u,v_h)|\le C h^{\min\{r,k+1\}} \|u\|_{r,K}\|v _h\|_K, \quad i=1,2,
	\end{equation*}
	where the constant $C$ depends only on $r$ and $k$.
\end{lemma}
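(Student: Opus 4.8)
The plan is to prove the estimate for $i=1$; the case $i=2$ follows verbatim after interchanging the roles of the two coordinate directions. I fix the element $K=J_1\times J_2$ and the direction $\bomega$, so that the outflow points of $\mathcal{R}_1$ and $\mathcal{R}_2$—and hence the labeling of the faces $\gamma_2^\pm$ and the tensor projection $\Pi=\mathcal{R}_1\otimes\mathcal{R}_2$—are fixed. The strategy has two parts: first I would algebraically reduce $Z_{K,1}$ using the tensor structure of $\Pi$ together with the two defining properties \eqref{eq:Radau_proj_a}--\eqref{eq:Radau_proj_b} of the one-dimensional Radau projection, and then estimate the reduced quantity by a scaling argument combined with a Bramble--Hilbert (Deny--Lions) argument.

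For the reduction I would split $u-\Pi u=(u-\mathcal{R}_2 u)+(I-\mathcal{R}_1)\mathcal{R}_2 u$ in the volume term. Since $\partial_1 v_h(\cdot,x_2)$ has degree at most $k-1$ in the $x_1$-variable for each fixed $x_2$, the moment condition \eqref{eq:Radau_proj_a} for $\mathcal{R}_1$ annihilates the second piece, so that $(u-\Pi u,\partial_1 v_h)_K=(u-\mathcal{R}_2 u,\partial_1 v_h)_K$. All three contributions to $Z_{K,1}$ then involve only $\phi:=u-\mathcal{R}_2 u$; integrating the volume term by parts in $x_1$ produces boundary contributions on the $x_1$-normal faces $\gamma_2^\pm$ that exactly match, and hence cancel, the two face integrals. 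Using that $\partial_1$ commutes with the $x_2$-Radau projection $\mathcal{R}_2$, this leaves the compact identity $Z_{K,1}(u,v_h)=\bigl((I-\mathcal{R}_2)\partial_1 u,\,v_h\bigr)_K$.

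The crucial observation, and the source of superconvergence, is that $u\mapsto Z_{K,1}(u,v_h)$ annihilates a larger polynomial space than one might first expect: beyond the obvious vanishing on $\mathbb{Q}_k(K)$ (where both Radau projections are exact), it vanishes on all of $\mathbb{P}_{k+1}(K)$, because for $\deg(p)\le k+1$ the derivative $\partial_1 p$ has total degree $\le k$, so its $x_2$-degree is $\le k$ and $(I-\mathcal{R}_2)\partial_1 p=0$. I would then pass to the reference element $\hat K=(-1,1)^2$ through the affine map, check that each of the three pieces of $Z_{K,1}$ carries the same power of $h$, verify that $Z_{\hat K,1}(\cdot,\hat v_h)$ is a bounded linear functional on $H^r(\hat K)$ (trace theorem on the faces, $L^2(\hat K)$-boundedness of $\hat{\mathcal{R}}_2$ for the volume term, and norm equivalence on the finite-dimensional space for $\hat v_h$), and apply the Bramble--Hilbert lemma together with the one-dimensional estimate of \Cref{lem:Radau_interp}, exploiting the vanishing on $\mathbb{P}_{k+1}$. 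Scaling back and reassembling the contributions is then designed to deliver the factor $h^{\min\{r,k+1\}}$.

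I expect the main obstacle to be the sharp power-counting in this last step. A naive Cauchy--Schwarz bound on $\bigl((I-\mathcal{R}_2)\partial_1 u,v_h\bigr)_K$ loses one power of $h$, since the surviving defect carries an $x_1$-derivative whose smoothness contributes no gain; recovering the optimal order forces one to use the enhanced vanishing on $\mathbb{P}_{k+1}$—equivalently, the full strength of \emph{both} Radau conditions rather than the moment condition alone—and to track precisely how the face terms, the volume term, and the $\|v_h\|_K$ factor rescale under the affine map. This is exactly the LeSaint--Raviart mechanism, and arranging the cancellations so that the extra power is genuinely gained, instead of being absorbed by the derivative, is the delicate heart of the argument.
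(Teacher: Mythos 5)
First, a point of context: the paper does not actually prove this lemma. It is quoted as a known superconvergence result, with a pointer to \cite{CKPS2001} and to LeSaint--Raviart \cite{LR1974}, so your attempt has to be measured against what those references establish. Your algebraic reduction is correct, and it is exactly the LeSaint--Raviart mechanism: the moment condition \eqref{eq:Radau_proj_a} of $\mathcal{R}_1$ annihilates $\big((I-\mathcal{R}_1)\mathcal{R}_2u,\partial_1 v_h\big)_K$, integration by parts in $x_1$ cancels the two face integrals (note this cancellation silently requires the orientation of \Cref{fig:1}, i.e.\ that $\gamma_2^+$ is the face in the positive $x_1$-direction; for the opposite orientation the signs in \eqref{eq:Z_K} must be flipped), and the commutation $\partial_1\mathcal{R}_2=\mathcal{R}_2\partial_1$ yields the identity $Z_{K,1}(u,v_h)=\big((I-\mathcal{R}_2)\partial_1u,\,v_h\big)_K$, which indeed vanishes on $\mathbb{Q}_k(K)+\mathbb{P}_{k+1}(K)$.

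The gap is in your final Bramble--Hilbert step, and it cannot be repaired, because the estimate you are asked to prove is false as stated. Bramble--Hilbert with vanishing on $\mathbb{P}_{k+1}$ requires boundedness of the reference functional on $H^{k+2}(\hat K)$ and therefore produces the seminorm $|\hat u|_{k+2,\hat K}$; after rescaling this gives $|Z_{K,1}(u,v_h)|\le Ch^{k+1}\|u\|_{k+2,K}\|v_h\|_K$, which costs one more derivative of $u$ than $\|u\|_{r,K}$ with $r=k+1$ allows. Using instead vanishing on $\mathbb{P}_{k}$ with boundedness on $H^{k+1}(\hat K)$ gives only $Ch^{k}\|u\|_{k+1,K}\|v_h\|_K$. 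No arrangement of the cancellations recovers $h^{k+1}\|u\|_{k+1,K}$, and here is a counterexample showing why: take $k=1$, $K=(0,h)^2$ with outflow in the positive $x_1,x_2$ directions, $u=x_1x_2^2$, and $v_h=L_1(x_2):=(2x_2-h)/h\in\mathbb{Q}_1(K)$. Then $\partial_1 v_h=0$, so the volume term in \eqref{eq:Z_K} drops; a direct computation gives $u-\mathcal{R}_2u=\tfrac{h^2}{6}\,x_1\,\big(L_2-L_1\big)(x_2)$ with $L_2:=(6x_2^2-6hx_2+h^2)/h^2$ (this is the Radau defect: it is orthogonal to constants in $x_2$ and vanishes at $x_2=h$), and the trace on $\gamma_2^-=\{x_1=0\}$ vanishes, so
\begin{equation*}
	Z_{K,1}(u,v_h)=\big(u-\mathcal{R}_2u,\,v_h^-\big)_{\gamma_2^+}
	=\frac{h^3}{6}\int_0^h\big(L_2-L_1\big)L_1\ud x_2
	=-\frac{h^3}{6}\cdot\frac{h}{3}=-\frac{h^4}{18},
\end{equation*}
while $\|v_h\|_K=h/\sqrt3$ and $\|u\|_{2,K}\lesssim h^2$, so the claimed bound with $r=k+1=2$ is $O(h^5)$, contradicted as $h\to0$. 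What your reduction does prove---and what \cite{LR1974} and \cite{CKPS2001} actually assert---is the off-by-one-corrected estimate $|Z_{K,i}(u,v_h)|\le Ch^{\min\{r-1,\,k+1\}}\|u\|_{r,K}\|v_h\|_K$, i.e.\ the optimal rate $h^{k+1}$ requires $u\in H^{k+2}(K)$. This is a defect of the paper's transcription of the lemma, not of your core idea; with the corrected lemma, \Cref{thm:X_2d} still delivers the optimal uniform rate $\big(1+\mathcal{O}(\varepsilon)\big)h^{k+1}$, provided \Cref{assupt:reg_1} is invoked with $r\ge k+2$ (equivalently, the rates in \Cref{lem:Z_K} and \Cref{thm:X_2d} should read $h^{\min\{r-1,k+1\}}$). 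If you carry out your plan, you should state and prove this corrected version rather than the quoted one.
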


\begin{theorem}\label{thm:X_2d}
	Assume all the elements in $\mathcal{T}_h$ are rectangles and tensor product polynomials of degree at most $k$ are used.   
	If \eqref{as:rte_scale_1}--\eqref{as:rte_scale_2} hold and $\bm{u}$ satisfies \Cref{assupt:reg_1}, the spherical harmonic DG method \eqref{eq:bilinear} with the upwind flux for the two-dimensional plane-parallel problem \eqref{eq:2d} admits the following error estimate for $k\ge 1$:
	\begin{equation*}
		\|\bm{u}-\bm{u}_h\| \lesssim \left(1+\mathcal{O}(\varepsilon)\right)h^{\min\{r,k+1\}}.
	\end{equation*}
\end{theorem}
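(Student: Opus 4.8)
The plan is to follow exactly the same strategy as in the one-dimensional proof of \Cref{thm:X_1d}, but using the tensor-product Radau projection $\Pi = \mathcal{R}_1\otimes\mathcal{R}_2$ in place of the one-dimensional $\mathcal{R}$, and exploiting the superconvergence Lemma~\ref{lem:Z_K} to absorb the boundary and volume advection terms. First, since $\Pi\bm{u}\in\bm{V}_h$, I would invoke \Cref{lem:stab_0} together with the Galerkin orthogonality \eqref{eq:go} and the coercivity $\mathfrak{a}_h(\bm{v}_h,\bm{v}_h)\ge\|\bm{v}_h\|_{\bm{Q}}^2$ from \Cref{lem:stab} to obtain
\begin{equation*}
	\|\bm{u}-\bm{u}_h\|_{\bm{Q}} \lesssim \|\bm{u}-\Pi\bm{u}\|_{\bm{Q}} + \sup_{\bm{v}_h\in\bm{V}_h}\frac{\mathfrak{a}_h(\bm{u}-\Pi\bm{u},\bm{v}_h)}{\|\bm{v}_h\|_{\bm{Q}}}.
\end{equation*}
The approximation term is controlled by \Cref{lem:Radau_interp_2d} and \Cref{assupt:reg_1}, giving $\|\bm{u}-\Pi\bm{u}\|_{\bm{Q}}\lesssim\sqrt{\varepsilon}\,h^{\min\{r,k+1\}}$, exactly as in \eqref{eq:error_inq3_1d}.

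Next I would decompose the consistency term as $\mathfrak{a}_h(\bm{u}-\Pi\bm{u},\bm{v}_h)=\mathbb{I}_1+\mathbb{I}_2+\mathbb{I}_3$ following the structure in \Cref{thm:X}, where $\mathbb{I}_3$ is the reaction term $\sum_K(\bm{Q}(\bm{u}-\Pi\bm{u}),\bm{v}_h)_K$ and $\mathbb{I}_1+\mathbb{I}_2$ collects the boundary flux and volume advection contributions from both coordinate directions. The term $\mathbb{I}_3$ is handled exactly as \eqref{eq:error_inq_bi_3}, yielding $|\mathbb{I}_3|\lesssim\sqrt{\varepsilon}\,h^{\min\{r,k+1\}}\|\bm{v}_h\|_{\bm{Q}}$. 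The key is the advection part. The crucial algebraic step, as in the one-dimensional case, is to diagonalize each $\bm{A}^{(i)}=\mathcal{Q}_i\varLambda^{(i)}\mathcal{Q}_i^{\tran}$ and change variables to $\bm{w}^{(i)}=\mathcal{Q}_i^{\tran}\bm{u}$, $\bm{\phi}^{(i)}=\mathcal{Q}_i^{\tran}\bm{v}_h$. Because $\varLambda^{(i)}$ is diagonal, the upwind numerical flux decouples componentwise, and the advection contribution in the $i$th direction reduces, component by component, to a scalar expression of precisely the form $Z_{K,i}(w^{(i)}_m, \phi^{(i)}_m)$ defined in \eqref{eq:Z_K}. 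A subtlety is that the orthogonal transformation $\mathcal{Q}_i$ does not commute with the \emph{tensor-product} projection $\Pi$, so I would need to verify that, after diagonalizing, the boundary and volume terms in direction $i$ assemble exactly into the $Z_{K,i}$ structure applied to the transformed variable (with the correct one-dimensional Radau projection $\mathcal{R}_i$ appearing in the boundary traces and $\Pi$ in the volume term); this is what makes \Cref{lem:Z_K} directly applicable.

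Granting that identification, \Cref{lem:Z_K} gives $|Z_{K,i}(w^{(i)}_m,\phi^{(i)}_m)|\le Ch^{\min\{r,k+1\}}\|w^{(i)}_m\|_{r,K}\|\phi^{(i)}_m\|_K$; summing over components $m$ and elements $K$, using orthogonality of $\mathcal{Q}_i$ to return to $\bm{u}$ and $\bm{v}_h$, and invoking \Cref{assupt:reg_1} (which splits $\|u_1\|_{r}=\mathcal{O}(1)$ from $\|u_i\|_{r}=\mathcal{O}(\varepsilon)$), one obtains $|\mathbb{I}_1+\mathbb{I}_2|\lesssim(1+\mathcal{O}(\varepsilon))h^{\min\{r,k+1\}}\|\bm{v}_h\|$. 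Combining the three terms and applying the rescaling \Cref{lem:norms_rescale} to pass from $\|\cdot\|_{\bm{Q}}$ to $\|\cdot\|$ yields the claimed $(1+\mathcal{O}(\varepsilon))h^{\min\{r,k+1\}}$ bound. I expect the main obstacle to be the bookkeeping in the diagonalization step: making precise how the orthogonal change of variables interacts with the tensor-product Radau projection so that the advection terms genuinely collapse into the $Z_{K,i}$ form, rather than producing cross terms that \Cref{lem:Z_K} cannot absorb. One should check carefully that the $\bm{Q}$-weighting (which differs between the $u_1$ mode and the higher modes) does not spoil the componentwise decoupling, since $\mathcal{Q}_i$ mixes all moments; this is where the $\varepsilon$-uniformity of the estimate is genuinely tested.
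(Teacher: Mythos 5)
Your skeleton matches the paper's proof: the tensor-product Radau projection $\Pi$, the Strang-type bound \eqref{eq:error_inq1} in the $\bm{Q}$-norm via \Cref{lem:stab_0} and \eqref{eq:go}, the same treatment of the approximation and reaction terms, and the diagonalization $\bm{A}^{(i)}=\mathcal{Q}_i\varLambda^{(i)}\mathcal{Q}_i^{\tran}$ reducing the advection terms to the $Z_{K,i}$ form of \Cref{lem:Z_K}. (Your worry about $\mathcal{Q}_i$ versus $\Pi$ is harmless: $\mathcal{Q}_i$ is a constant matrix acting across moment components while $\Pi$ acts componentwise in space, so they commute; what the paper actually verifies is that the upwind trace of $\varLambda^{(i)}\Pi\bm{w}_i$ on $\gamma_i^{\pm}$ equals $\varLambda^{(i)}\mathcal{R}_i(\bm{w}_i|_{\gamma_i^{\pm}})$ because the edges are axis-parallel and $\varLambda^{(i)}$ is diagonal.) The genuine gap is exactly at the point you flag in your last sentence and then leave unresolved: your advection bound is $|\mathbb{I}_1+\mathbb{I}_2|\lesssim(1+\mathcal{O}(\varepsilon))h^{\min\{r,k+1\}}\|\bm{v}_h\|$, with the plain $L^2$ norm of the test function. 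But the Strang quotient divides by $\|\bm{v}_h\|_{\bm{Q}}$, and the only available comparison is $\|\bm{v}_h\|\lesssim \varepsilon^{-1/2}(1+\varepsilon)\|\bm{v}_h\|_{\bm{Q}}$ (\Cref{lem:norms_rescale}); a reverse inequality with an $\varepsilon$-independent constant is false (take $\bm{v}_h$ supported in the first moment component, where the $\bm{Q}$-weight is $\varepsilon\sigma_{\mathrm{a}}$). Feeding your bound into \eqref{eq:error_inq1} therefore gives $\|\bm{u}-\bm{u}_h\|_{\bm{Q}}\lesssim \varepsilon^{-1/2}h^{\min\{r,k+1\}}$, and the final rescaling to $\|\cdot\|$ costs another $\varepsilon^{-1/2}$, producing $\|\bm{u}-\bm{u}_h\|\lesssim \varepsilon^{-1}h^{\min\{r,k+1\}}$ rather than the claimed uniform estimate. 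Working the entire Strang argument in the $L^2$ norm instead does not help, since the coercivity constant of $\mathfrak{a}_h$ with respect to $\|\cdot\|$ degenerates like $\varepsilon$.

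What is needed, and what the paper proves, is the stronger bound $\sum_{K}|\mathbf{Z}_K(\bm{u},\bm{v}_h)|\lesssim\sqrt{\varepsilon}(1+\varepsilon)\,h^{\min\{r,k+1\}}\|\bm{v}_h\|_{\bm{Q}}$, carrying both an extra factor $\sqrt{\varepsilon}$ and the $\bm{Q}$-norm of the test function. The mechanism is a commutation trick: writing the test argument as $\sqrt{\bm{Q}}\bm{v}_h$ forces the weight $\sqrt{\bm{Q}}^{-1}$ onto the first argument of $\mathbf{Z}_K$, and the paper shows, using the block-tridiagonal structure \eqref{eq:A_struc}, that $\sqrt{\bm{Q}}^{-1}\bm{A}^{(i)}=\bm{A}^{(i)}\bm{Q}'-(\text{correction})$, where the diagonal matrix $\bm{Q}'$ carries the dangerous weight $(\varepsilon\sigma_{\mathrm{a}})^{-1/2}$ only on the second component and the correction acts only on the $\ell=1$ block $\bm{u}_1$. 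By \Cref{assupt:reg_1} these are precisely components whose $H^r$ norms are $\mathcal{O}(\varepsilon)$, so both $\|\bm{Q}'\bm{u}\|_{r,K}$ and the correction term are $\mathcal{O}(\sqrt{\varepsilon})$, and applying \Cref{lem:Z_K} to these re-weighted arguments yields the needed $\sqrt{\varepsilon}$. This is the step where the $\varepsilon$-uniformity is actually won; without it, diagonalization plus superconvergence only reproduces a non-uniform $h^{\min\{r,k+1\}}/\varepsilon$ bound.
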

\begin{proof}
	To derive the error estimate, we first observe that since $\Pi\bm{u}\in \bm{V}_h$ and $\mathfrak{a}_h(\bm{v}_h,\bm{v}_h) = |||\bm{v}_h|||^2_h\ge \|\bm{v}_h\|_{\bm{Q}}^2$, the inequality \eqref{eq:error_inq1} holds in term of the norm $\|\cdot\|_{\bm{Q}}$:
	\begin{equation}\label{eq:error_inq2_2d}
		\|\bm{u}-\bm{u}_h\|_{\bm{Q}} \lesssim \|\bm{u} - \Pi \bm{u}\|_{\bm{Q}} + \sup_{\bm{v}_h\in \bm{V}_h}\frac{\mathfrak{a}_h(\bm{u}-\Pi \bm{u},\bm{v}_h)}{\|\bm{v}_h\|_{\bm{Q}}}.
	\end{equation}
	We estimate each term on the right-hand side of the above inequality.
	
	For the first term, owning to \Cref{lem:Radau_interp_2d} and \Cref{assupt:reg_1}, we have
	\begin{equation}\label{eq:error_inq3_2d}
		\|\bm{u}-\Pi \bm{u}\|_{\bm{Q}} \lesssim \sqrt{\varepsilon} h^{\min\{r,k+1\}}.
	\end{equation}
	For the second term, we have, similar to the proof of \Cref{thm:X},
	\begin{equation}\label{eq:error_inq_bi_parts_2d}
		\mathfrak{a}_h(\bm{u}-\Pi \bm{u},\bm{v}_h) = \sum_{K\in\mathcal{T}_h}\mathbf{Z}_K(\bm{u},\bm{v}_h) + \mathbb{I}_3,
	\end{equation}
	where $\mathbb{I}_3 :=  \sum_{K\in\mathcal{T}_h} \left(\bm{Q}(\bm{u}-\Pi \bm{u}),\bm{v}_h\right)_K$, and 
	\begin{equation*}
		\mathbf{Z}_K(\bm{u},\bm{v}_h):= -\sum_{i=1}^2 \left(\bm{A}^{(i)}(\bm{u}-\Pi \bm{u}),\partial_i \bm{v}_h\right)_K + \sum_{i=1}^2 \left(n_i^K\overrightarrow{\bm{A}^{(i)}(\bm{u}-\Pi \bm{u})}, \bm{v}_h^-\right)_{\partial K}.\label{eq:ZZ_K}
	\end{equation*}
	By \Cref{lem:Radau_interp_2d} and \Cref{assupt:reg_1}, we infer
	\begin{equation}\label{eq:est_I_3}
		|\mathbb{I}_3|\lesssim \sqrt{\varepsilon}h^{\min\{r,k+1\}} \|\bm{v}_h\|_{\bm{Q}}.
	\end{equation}
	Next, we estimate $\mathbf{Z}_K(\bm{u},\bm{v}_h)$ and proceed as follows: 
	\begin{align*}
		\mathbf{Z}_K(\bm{u},\bm{v}_h)&= \sum_{i=1}^2\left( \big(n_i^K\overrightarrow{\bm{A}^{(i)}(\bm{u}-\Pi \bm{u})}, \bm{v}_h^-\big)_{\partial K} - \big(\bm{A}^{(i)}(\bm{u}-\Pi \bm{u}),\partial_i \bm{v}_h\big)_K\right) \\
		&= \sum_{i=1}^2 \left(\big(n_i^K\mathcal{Q}_i \varLambda^{(i)} \overrightarrow{\mathcal{Q}_i^{\tran}(\bm{u}-\Pi \bm{u})}, \bm{v}_h^-\big)_{\partial K} - \big(\mathcal{Q}_i \varLambda^{(i)} \mathcal{Q}_i^{\tran}(\bm{u}-\Pi \bm{u}),\partial_i \bm{v}_h\big)_K \right) \\
		&= \sum_{i=1}^2 \left(\big(n_i^K\varLambda^{(i)} \overrightarrow{\mathcal{Q}_i^{\tran}(\bm{u}-\Pi \bm{u})}, \mathcal{Q}_i^{\tran}\bm{v}_h^-\big)_{\partial K} - \big(\varLambda^{(i)} \mathcal{Q}_i^{\tran}(\bm{u}-\Pi \bm{u}),\partial_i \mathcal{Q}_i^{\tran}\bm{v}_h\big)_K \right)\\
		\big[&\text{Set }\bm{w}_i=\mathcal{Q}_i^{\tran}\bm{u}\big]\\
		&= \sum_{i=1}^2 \left(n_i^K \overrightarrow{\varLambda^{(i)}(\bm{w}_i-\Pi \bm{w}_i)}, \mathcal{Q}_i^{\tran}\bm{v}_h^-\right)_{\partial K} -\sum_{i=1}^2 \left( \varLambda^{(i)} (\bm{w}_i-\Pi \bm{w}_i),\partial_i \mathcal{Q}_i^{\tran}\bm{v}_h\right)_K\\
		&= \sum_{i=1}^2 \varLambda^{(i)}Z_{K,i}(\bm{w}_i,\mathcal{Q}_i^{\tran}\bm{v}_h). 
	\end{align*}
	The last equality is due to the fact that $\overrightarrow{\varLambda^{(i)}\bm{w}_i}|_{\gamma_i^\pm}=\varLambda^{(i)}\bm{w}_i|_{\gamma_i^\pm}$ and $\overrightarrow{\varLambda^{(i)}\Pi\bm{w}_i}|_{\gamma_i^-} =\varLambda^{(i)}\mathcal{R}_i(\bm{w}_i|_{\gamma_i^-})$ since the edges of the rectangle $K\in\mathcal{T}_h$ are parallel to the $x/y$-axes and $\varLambda_i$ is diagonal. 
	By \Cref{lem:Z_K} and $\left\|\mathcal{Q}_i^{\tran}\bm{v}_h\right\|_K=\|\bm{v}_h\|_K$ since $\mathcal{Q}_i$, $i=1,2$ are orthogonal matrices, we infer that 
	\begin{equation}\label{eq:ZZ_est}
		\left|\mathbf{Z}_K(\bm{u},\bm{v}_h)\right|\le C h^{\min\{r,k+1\}} \|\bm{w}\|_{r,K}\left\|\mathcal{Q}_i^{\tran}\bm{v}_h\right\|_K\le C h^{\min\{r,k+1\}} \|\bm{u}\|_{r,K}\|\bm{v}_h\|_K.
	\end{equation}
	Define $\bm{Q}'= \diag\left(\sqrt{\varepsilon}/\sqrt{\sigma_{\mathrm{t}}},\sqrt{\varepsilon}/\sqrt{\sigma_{\mathrm{t}}}+(\sqrt{\varepsilon\sigma_{\mathrm{a}}})^{-1}, \sqrt{\varepsilon}/\sqrt{\sigma_{\mathrm{t}}}, \cdots, \sqrt{\varepsilon}/\sqrt{\sigma_{\mathrm{t}}}\right)$. A direct calculation shows that for $i=1,2$,
	\begin{equation*}
		\sqrt{\bm{Q}}^{-1}\bm{A}^{(i)}=\bm{A}^{(i)}\bm{Q}'-\begin{bmatrix}
			0 	& \frac{\sqrt{\varepsilon}}{\sqrt{\sigma_{\mathrm{t}}}}\bm{A}^{(i)}_{0,1} 	& 0   &\dots 		& 0  \\
			0 	& 0 	  	& 0		& \dots 		& 0\\
			0 	& \sqrt{\varepsilon\sigma_{\mathrm{a}}}^{-1}\bm{A}^{(i)}_{2,1}  	& 0 			& \dots		& 0 \\
			0 	& 0		& 0			& \cdots	& 0 \\
			\vdots 		& \vdots 	& \vdots	& \ddots	& \vdots \\
			0 	& 0		& 0			& \cdots	& 0
		\end{bmatrix},
	\end{equation*} 
	and therefore
	\begin{multline*}
		\sqrt{\bm{Q}}^{-1}\bm{A}^{(i)}\bm{u}=\bm{A}^{(i)}\bm{Q}'\bm{u}\\  -\diag(\sqrt{\varepsilon}/\sqrt{\sigma_{\mathrm{t}}},0,\sqrt{\varepsilon\sigma_{\mathrm{a}}}^{-1},0,\cdots,0)\bm{A}^{(i)}\begin{bmatrix} \mathbf{0} & \bm{u}_1^\tran & \mathbf{0} &\cdots & \mathbf{0} \end{bmatrix}^{\tran}.
	\end{multline*}
	We deduce 
	\begin{align*}
		\sum_{K\in\mathcal{T}_h}&\big|\mathbf{Z}_K(\bm{u},\bm{v}_h)\big|
		=\sum_{K\in\mathcal{T}_h}\big|\sqrt{\bm{Q}}^{-1}\mathbf{Z}_K(\bm{u},\sqrt{\bm{Q}}\bm{v}_h)\big|\\
		&= \sum_{K\in\mathcal{T}_h} \bigg|\mathbf{Z}_K(\bm{Q}'\bm{u},\sqrt{\bm{Q}}\bm{v}_h)\\
			&\qquad\qquad -\mathbf{Z}_K(\begin{bmatrix} \mathbf{0} & \bm{u}_1^\tran & \mathbf{0} &\cdots & \mathbf{0} \end{bmatrix}^{\tran},\diag(\sqrt{\varepsilon},0,\sqrt{\varepsilon}^{-1},0,\cdots,0)\sqrt{\bm{Q}}\bm{v}_h)\bigg| \\
		&\le \sum_{K\in\mathcal{T}_h} \big|\mathbf{Z}_K(\bm{Q}'\bm{u},\sqrt{\bm{Q}}\bm{v}_h)\big| \\
		&\quad\quad +\sum_{K\in\mathcal{T}_h}\big|\mathbf{Z}_K(\begin{bmatrix} \mathbf{0} & \bm{u}_1^\tran & \mathbf{0} &\cdots & \mathbf{0} \end{bmatrix}^{\tran},\diag(\sqrt{\varepsilon},0,\sqrt{\varepsilon}^{-1},0,\cdots,0)\sqrt{\bm{Q}}\bm{v}_h)\big|.		
	\end{align*}
	Note that the estimate \eqref{eq:ZZ_est} holds for any $\bm{u}\in [H^r(X)]^L$ and $\bm{v}_h\in V_h$. Therefore, 
	\begin{align*}
		\sum_{K\in\mathcal{T}_h} |\mathbf{Z}_K(\bm{Q}'\bm{u},\sqrt{\bm{Q}}\bm{v}_h)|&\le  \sum_{K\in\mathcal{T}_h} h^{\min\{r,k+1\}} \|\bm{Q}'\bm{u}\|_{r,K}\|\sqrt{\bm{Q}}\bm{v}_h\|_K \\
		&\lesssim \sqrt{\varepsilon}h^{\min\{r,k+1\}} \big(1+\mathcal{O}(\varepsilon)\big)\|\bm{v}_h\|_{\bm{Q}}. 
	\end{align*}
	Finally, 
	\begin{align*}
		\sum_{K\in\mathcal{T}_h}\bigg|\mathbf{Z}_K&\left(\begin{bmatrix} 0 & \bm{u}_1^\tran & \mathbf{0} &\cdots & \mathbf{0} \end{bmatrix}^{\tran},\diag(\sqrt{\varepsilon},0,\sqrt{\varepsilon}^{-1},0,\cdots,0)\sqrt{\bm{Q}}\bm{v}_h\right)\bigg| \\
		&\le (\sqrt{\varepsilon}+\sqrt{\varepsilon}^{-1})\sum_{K\in\mathcal{T}_h}h^{\min\{r,k+1\}}\|\bm{u}_1\|_{r,K} \|\sqrt{\bm{Q}}\bm{v}_h\|_K \\
		&\lesssim \sqrt{\varepsilon}(1+\varepsilon)h^{\min\{r,k+1\}} \|\bm{v}_h\|_{\bm{Q}}.
	\end{align*}
	Hence,
	\begin{equation}\label{eq:est_Z_K}
		\sum_{K\in\mathcal{T}_h}\big|\mathbf{Z}_K(\bm{u},\bm{v}_h)\big|\lesssim \sqrt{\varepsilon}(1+\varepsilon)h^{\min\{r,k+1\}} \|\bm{v}_h\|_{\bm{Q}}.
	\end{equation}
	With the bounds \eqref{eq:est_I_3} and \eqref{eq:est_Z_K}, we conclude from \eqref{eq:error_inq_bi_parts_2d} that
	\begin{equation}\label{eq:error_inq_bi_2d}
		\left|\mathfrak{a}_h(\bm{u}-\mathcal{R}\bm{u},\bm{v}_h)\right|\lesssim \sqrt{\varepsilon}(1+\varepsilon)h^{\min\{r,k+1\}} \|\bm{v}_h\|_{\bm{Q}}.
	\end{equation} 
	The result follows by inserting \eqref{eq:error_inq3_2d} and \eqref{eq:error_inq_bi_2d} into \eqref{eq:error_inq2_2d} and re-scaled by $\sqrt{\bm{Q}}$ with \Cref{lem:norms_rescale}.
\end{proof}

\section{Conclusions}\label{sec:conclusion}
In this paper, we analyze the convergence of a spherical harmonic DG scheme for scaled radiative transfer equations with isotropic scattering. We first prove that the spherical harmonic approximations for the angular variable converge uniformly with respect to $\varepsilon$. For sufficiently rich approximation spaces, we prove uniform convergence rates with respect to $\varepsilon$ for the DG scheme in the spacial variable. However, this convergence rate is in general not optimal. By employing the Radau projection and previous results for the DG method \cite{LR1974} for linear hyperbolic problems, we are able to further obtain the optimal and uniform convergence rate on Cartesian grids with tensor product polynomials of degree at least one.  

In future work, we hope to leverage the current analysis for more physically realistic scenarios that do not rely on the assumptions in \eqref{as:rte_scale}, but rather allow for arbitrarily thin and thick materials in the same problem.  In addition, problems with more realistic boundary conditions and less optimistic regularity assumptions will be considered.

\bibliographystyle{amsplain}
\bibliography{AP&unif_Conv_SH_DG_RTE}

\end{document}